%%% The main file. It contains definitions of basic parameters and includes all other parts.

\documentclass[11pt,a4paper]{amsart}

\usepackage[english]{babel}
\thanks{The author was supported by the project PRIMUS/20/SCI/002 from Charles University.}
\thanks{ \textsc{address:} Charles University, Faculty of Mathematics and Physics, Department of Algebra, Sokolov\-sk\' a 83, 18600 Praha~8, Czech Republic \\ \textsc{email:} abenes970@gmail.com}

\usepackage{url}
%% Further useful packages (included in most LaTeX distributions)
\usepackage{amsmath}        % extensions for typesetting of math
\usepackage{amsfonts}       % math fonts
\usepackage{amsthm}         % theorems, definitions, etc.
\usepackage{bbding}         % various symbols (squares, asterisks, scissors, ...)
\usepackage{bm}             % boldface symbols (\bm)
\usepackage{fancyvrb}       % improved verbatim environment
\usepackage[all]{xy}
%\usepackage[nottoc]{tocbibind} % makes sure that bibliography and the lists
			    % of figures/tables are included in the table
			    % of contents
%\usepackage{dcolumn}        % improved alignment of table columns
%\usepackage{booktabs}       % improved horizontal lines in tables
%\usepackage{paralist}       % improved enumerate and itemize
%\usepackage{xcolor}         % typesetting in color
\title{Counting extensions of imaginary quadratic fields}
\author[]{Alexandr Bene\v{s} }

\overfullrule=1mm

%%% Macros for definitions, theorems, claims, examples, ... (requires amsthm package)

\theoremstyle{plain}
\newtheorem{theorem}{Theorem}
\newtheorem{lemma}[theorem]{Lemma}
\newtheorem{definition}[theorem]{Definition}

%%% An environment for proofs

\newcommand{\ok}{\mathcal{O}}
\newcommand{\pp}{\mathfrak{p}}
\newcommand{\mm}{\mathfrak{m}}
\newcommand{\ii}{\mathbb{I}}
\newcommand{\zz}{\mathbb{Z}}
\newcommand{\qq}{\mathbb{Q}}
\newcommand{\rr}{\mathbb{R}}
\newcommand{\cc}{\mathbb{C}}
\newcommand{\uu}{\mathcal{U}}

%%% Transposition of a vector/matrix

%%% Various math goodies

%%% Various table goodies

% Title page and various mandatory informational pages
\begin{document}
%\include{title}

%%% A page with automatically generated table of contents of the bachelor thesis

\maketitle

\begin{abstract}
	The goal is to obtain an asymptotic formula for the number of quadratic extensions with bounded discriminant of a some quadratic number field with odd class number. This extends an already known result for $\qq$.
\end{abstract}

\section*{Introduction}

Let $K$ be a number field. We define the function $a_n$ as the number of quadratic extensions of $K$ with discriminant $\leq n $. In \cite{wood} it is shown that for $K$ the rational numbers the function $a_n$ has the asymptotic behaviour:
\begin{equation*}
a_\qq(n)= \frac{6}{\pi^2}n + o(n).
\end{equation*}
We will extend this theorem for $K$ an imaginary and real quadratic fields with odd class number. The main result for imaginary quadratic number fields is:
\begin{theorem}
	\label{main}
	For a quadratic imaginary number field $K=\qq(\sqrt{-p})$ with $p$ prime $ > 3$ that is $3$  $(mod$ $4)$, the function $a_K(n) = \#\lbrace L / K | \deg(L/K)=2, \mathfrak{d}_L \leq n\rbrace$ is asymptotically equal to
	\begin{equation*}
	a_K(n) = Cn + o(n)
	\end{equation*}
	where $C$ is given by
	\begin{equation*}
	C = \frac{1}{2\mathfrak{d}_K^2} \frac{ \mathrm{Res}_{s=1}\zeta_K(s)}{\zeta_K(2)} \prod_{\pp |2} \frac{g_\pp(1)}{(1+N(\pp)^{-1})} =\frac{\mathfrak{d}_K^{-5/2} \pi h_K}{2\zeta_K(2)} \prod_{\pp |2} \frac{g_\pp(1)}{(1+N(\pp)^{-1})}
	\end{equation*}
\end{theorem}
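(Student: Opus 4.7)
The strategy is to parameterize quadratic extensions $L/K$ by square classes in $K^*/K^{*2}$, encode the discriminants in a Dirichlet series, and extract the asymptotic via a Tauberian theorem. The tower formula $\mathfrak{d}_L = N(\mathfrak{d}_{L/K}) \cdot \mathfrak{d}_K^2$ reduces counting $\mathfrak{d}_L \leq n$ to counting $N(\mathfrak{d}_{L/K}) \leq n/\mathfrak{d}_K^2$.

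Genus theory for $K = \qq(\sqrt{-p})$ with $p \equiv 3 \pmod 4$ prime ensures that $\mathrm{Cl}_K$ has odd order: only $p$ ramifies, so $\mathrm{Cl}_K[2]$ is trivial. Thus every ideal class is a square, and we obtain the exact sequence
\begin{equation*}
1 \to \ok^*/\ok^{*2} \to K^*/K^{*2} \to I_K/I_K^2 \to 1,
\end{equation*}
in which $\ok^*/\ok^{*2} = \{\pm 1\}$ since $p > 3$. Hence quadratic extensions correspond to pairs (squarefree ideal $\mathfrak{a}$, sign $\pm 1$) modulo the trivial one. The relative discriminant $\mathfrak{d}_{L/K}$ factors as a product of local contributions: at primes $\pp \nmid 2$ the contribution is $\pp$ iff $v_\pp(\alpha)$ is odd; at $\pp \mid 2$ it is a local conductor depending on $\alpha$ modulo $K_\pp^{*2}$, whose generating polynomial will be $g_\pp(s)$.

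Assembling these into a Dirichlet series and using the identity $\prod_\pp(1 + N(\pp)^{-s}) = \zeta_K(s)/\zeta_K(2s)$, the parameterization should produce
\begin{equation*}
Z(s) := \sum_{L/K \text{ quadratic}} N(\mathfrak{d}_{L/K})^{-s} = \frac{1}{2} \cdot \frac{\zeta_K(s)}{\zeta_K(2s)} \cdot \prod_{\pp \mid 2} \frac{g_\pp(s)}{1 + N(\pp)^{-s}}
\end{equation*}
up to a term holomorphic at $s = 1$, the factor $\tfrac{1}{2}$ absorbing $|\ok^*/\ok^{*2}|$. Wiener--Ikehara then yields $\#\{L : N(\mathfrak{d}_{L/K}) \leq x\} = C'' x + o(x)$ with $C''$ the residue of $Z(s)$ at $s = 1$. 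Substituting $x = n/\mathfrak{d}_K^2$ recovers the first form of $C$, and the analytic class number formula $\mathrm{Res}_{s=1}\zeta_K(s) = \pi h_K/\sqrt{\mathfrak{d}_K}$ (using $w_K = 2$) delivers the second.

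The main obstacle will be the Euler factorization of $Z(s)$: one must check carefully which tuples of local square classes actually lift to a global $\alpha \in K^*$---this is where the odd class number hypothesis is essential---and determine the local factors $g_\pp(s)$ at primes above $2$, where both the structure of $K_\pp^*/K_\pp^{*2}$ and the conductor behavior of the ramified quadratic characters are more involved.
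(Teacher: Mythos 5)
Your route is genuinely different from the paper's: you parameterize quadratic extensions ``from below'' via Kummer theory ($L = K(\sqrt{\alpha})$ for $\alpha \in K^\times/K^{\times 2}$, resolved through the exact sequence $1 \to \ok_K^\times/\ok_K^{\times 2} \to K^\times/K^{\times 2} \to I_K/I_K^2 \to 1$, which indeed requires exactly the triviality of $Cl_K[2]$ that genus theory gives for $\qq(\sqrt{-p})$), whereas the paper works ``from above'' via Artin reciprocity, identifying quadratic extensions with continuous characters $\chi = \sum_\pp \chi_\pp$ of $\ii_K^\infty$ killing $\ok_K^\times = \{\pm 1\}$, and computes discriminants through local Artin conductors. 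The two are dual, both use the odd class number in the same essential way, the tower formula $\mathfrak{d}_L = N(\mathfrak{d}_{L/K})\mathfrak{d}_K^2$ and the class number formula appear identically, and your constant matches. The character-side approach buys a cleaner separation of the parity condition: the paper writes the count as $\frac{1}{2}(f_0 + f_-)$, where $f_0$ is an honest Euler product and $f_-$ carries all the correlation.

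That last point is where your proposal has a real gap, and you have correctly located it but not closed it. On the Kummer side the Dirichlet series $Z(s)=\sum_L N(\mathfrak{d}_{L/K})^{-s}$ is \emph{not} an Euler product: for a fixed odd squarefree ideal $\mathfrak{a}$, the local square classes at $\pp \mid 2$ (hence the wild part of the conductor) of the finitely many lifts $\pm\alpha$ are correlated with $\mathfrak{a}$ itself --- already over $\qq$, whether $\mathrm{disc}(\qq(\sqrt m))$ is $m$ or $4m$ depends on $m \bmod 4$. So the identity $Z(s) = \tfrac12\,\zeta_K(s)\zeta_K(2s)^{-1}\prod_{\pp\mid 2} g_\pp(s)(1+N(\pp)^{-s})^{-1} + (\text{holomorphic at } s=1)$ is precisely the assertion that these correlations average out to first order, and proving it requires a reciprocity input. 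In the paper this is the content of the $f_-$ analysis: the correlation term is expressed as $L(\chi_4,s)L(\chi_4\chi_K,s)/\zeta_K(2s)$ times elementary factors, and holomorphy at $s=1$ follows because $\chi_4$ and $\chi_4\chi_K$ are nontrivial. (This is also where the hypothesis $p \equiv 3 \pmod 4$, $p>3$ actually enters the sign bookkeeping.) Your ``factor $\tfrac12$ absorbing $|\ok_K^\times/\ok_K^{\times 2}|$'' is numerically the right coefficient but the wrong explanation --- each squarefree ideal lifts to \emph{two} classes $\pm\alpha$, so naively one would multiply by $2$; the $\tfrac12$ really comes from averaging over the $2\cdot 2^{\#\{\pp\mid 2\}}$ lifts against the local conductor spectrum at $2$. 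Until that averaging argument (or an equivalent $L$-function identity) is supplied, the pole and residue of $Z(s)$ at $s=1$ are not established.
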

where $h_K$ is the class number, $\mathfrak{d}_K$ is the discriminant and  $g_\pp$ is a simple term that will be given exactly.  An analogous result for real number fields is Theorem \ref{mainreal}.\\
Our method will use class field theory, Similar method can be used to count cubic or higher degree extension or count extensions with certain splitting properties at a set of primes.
We will extend the proof found in \cite{wood} from the rational numbers to some quadratic number fields.  The book \cite{cohen} tells more about using class field theory to create algorithms to count extensions with certain properties. The article \cite{wright} uses this method for extensions of a number field with given Galois group. For more about characters on quadratic number fields see \cite{li}.\\
The main idea is to view quadratic extensions of $K$ as open index 2 subgroups of the absolute Galois group of $K$ and use the Artin reciprocity to transform them to subgrups of the group of id\`{e}les. These can also be seen as kernels of homomorphisms from id\`{e}les to $\zz/2\zz$. If $K$ has odd  class number, then all such homomorphisms can be constructed as sums of homomorphisms from local fields to $\zz/2\zz$ such that they send all units of $\ok_K$ to 0.\\
If the class number is even, then there are homomorphisms from the class group $Cl_K$ to $\zz/2\zz$, which would have to be taken to account. Also there would be more homomorphisms from local fields than homomorphisms from the id\`{e}les, because the group  $\mathrm{Ext}^1(Cl_K,\zz/2\zz)$ would be nonzero and to calculate it, we would need to know nontrivial information about the structure of the class group.\\
The discriminant of each extensions can be computed from the local homomorphisms. The information about the extension can be put into one function called the counting function. The asymptotic behaviour is then estimated from the counting function using a Tauberian theorem, if we rewrite the counting function using certain zeta and L-functions.

\section{Algebraic number theory}

We will denote $K$ a number field. $\ok_K$ its ring of integers, $\mathfrak{d}_K$ the discriminant (which will always be positive), $\pp$ a place of $K$ and $|-|_\pp$ the associated norm. Then $K_\pp$ will be the completion of $K$ with respect to the norm, a locally compact normed field called a local field associated to $\pp$. A local field is non-archimedean if it is associated to a discrete valuation.
For a non-archimedean local field $A$ we define the ring of integers $\ok_A = \lbrace x \in A : |x| \leq 1 \rbrace $, its maximal ideal $\mm = \lbrace x \in A : |x| < 1 \rbrace$ and the residue field $\kappa_A = \ok_A/\mm_A$ which is finite. The discreteness of the norm implies that the maximal ideal is principal and its generator is called uniformizer $\pi$. The absolute value is usually normalized so that the uniformizer has norm $q^{-1}$ where $q$ is the cardinality of the residue field. The units in $\ok_A$ are exactly the elements with $|a|=1$.
\\

\section{Groups of units of local fields}
For our main result we will need to know more about the structure of the group of units of local fields. For a local field $A$ we define the $n$-th unit group $\uu^{(n)}= 1 + \mm^n$ and $\uu=\ok^\times= \uu^{(0)}$. 
\begin{theorem}
	\label{mod}
	We have $A^\times \cong (\pi) \times \uu \cong (\pi) \times \uu /\uu^{(1)} \times \uu^{(1)}  $ and the quotients
	\begin{equation*}
	\uu/\uu^{(n)} \cong (\ok/\mm^n)^\times \text{ and } \uu^{(n)}/\uu^{(n+1)} \cong \uu/\mm
	\end{equation*}
	for $n \geq 1$.
\end{theorem}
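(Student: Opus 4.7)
The plan is to verify the four isomorphisms in sequence. For the outer decomposition $A^\times \cong (\pi) \times \uu$, I would use the normalized valuation $v$: every $a \in A^\times$ factors uniquely as $a = \pi^{v(a)} \cdot (a\pi^{-v(a)})$, where the second factor has absolute value $1$ and hence lies in $\uu$. This is a splitting of the short exact sequence $1 \to \uu \to A^\times \to (\pi) \to 1$, with the section $\pi^n \mapsto \pi^n$ from the infinite cyclic group $(\pi) \cong \zz$.

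For the inner splitting $\uu \cong \uu/\uu^{(1)} \times \uu^{(1)}$, I would construct the Teichm\"uller section via Hensel's lemma. The residue field $\kappa$ has order $q = p^k$ for $p$ the residue characteristic, so $\uu/\uu^{(1)} \cong \kappa^\times$ is cyclic of order $q-1$, which is coprime to $p$. Apply Hensel's lemma to $f(x) = x^{q-1} - 1$: for each $\bar\alpha \in \kappa^\times$ one has $f(\bar\alpha) = 0$ and $f'(\bar\alpha) = (q-1)\bar\alpha^{q-2} \neq 0$ in $\kappa$, so there is a unique lift $\alpha \in \ok$ with $\alpha^{q-1} = 1$ and $\alpha \equiv \bar\alpha \pmod{\mm}$. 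Sending $\bar\alpha \mapsto \alpha$ gives a section $\kappa^\times \to \uu$ of the reduction, and the sequence $1 \to \uu^{(1)} \to \uu \to \uu/\uu^{(1)} \to 1$ splits.

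For the quotient $\uu/\uu^{(n)} \cong (\ok/\mm^n)^\times$, I would examine the reduction map $\ok \to \ok/\mm^n$ restricted to units. Its kernel on $\uu$ is $\{u \in \uu : u \equiv 1 \pmod{\mm^n}\} = \uu^{(n)}$. For surjectivity, any representative $a \in \ok$ of a class in $(\ok/\mm^n)^\times$ must satisfy $a \notin \mm$ (else $a \bmod \mm^n$ is not a unit), hence $|a| = 1$ and $a \in \uu$. For $\uu^{(n)}/\uu^{(n+1)} \cong \ok/\mm$ (as additive groups, for $n \geq 1$), define $\varphi(1 + a\pi^n) = a \bmod \mm$. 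The identity
\begin{equation*}
(1 + a\pi^n)(1 + b\pi^n) = 1 + (a+b)\pi^n + ab\pi^{2n},
\end{equation*}
together with $2n \geq n+1$, shows that $\varphi$ is a well-defined homomorphism from a multiplicative group to an additive one; its kernel is exactly $\uu^{(n+1)}$ and it is obviously surjective.

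The only step that is not essentially bookkeeping is the Teichm\"uller lifting in paragraph~two, so the main obstacle is verifying that Hensel's lemma applies: this reduces to the observation that the residue characteristic $p$ does not divide $q - 1$, which makes $x^{q-1} - 1$ separable over $\kappa$. Everything else is a direct manipulation with the valuation and with congruences modulo powers of $\mm$.
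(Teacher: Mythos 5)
Your proof is correct, but note that the paper does not actually prove this statement: it simply cites Neukirch, Chapter II, 3.10 and 5.3, so you have supplied a self-contained argument where the paper outsources one. Your route is in fact the standard one behind that citation: the valuation gives the splitting $A^\times \cong (\pi)\times\uu$, the Teichm\"uller lift via Hensel's lemma applied to $x^{q-1}-1$ splits off $\uu/\uu^{(1)}\cong\kappa^\times$, and the reduction maps give the two quotient isomorphisms. Two small points. First, in the Teichm\"uller step you should say explicitly why $\bar\alpha\mapsto\alpha$ is a group homomorphism: the product $\alpha\beta$ of two lifts is again a $(q-1)$-st root of unity reducing to $\bar\alpha\bar\beta$, so by the uniqueness clause of Hensel's lemma it \emph{is} the lift of $\bar\alpha\bar\beta$; this one-line remark closes the only real gap. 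Second, your target $\ok/\mm$ (the additive group of the residue field) for the isomorphism $\uu^{(n)}/\uu^{(n+1)}\cong\ok/\mm$ is the correct statement; the ``$\uu/\mm$'' appearing in the theorem as printed is evidently a typo, and your computation $(1+a\pi^n)(1+b\pi^n)=1+(a+b)\pi^n+ab\pi^{2n}$ with $2n\geq n+1$ for $n\geq 1$ is exactly the right justification. With those clarifications your argument is complete and could replace the citation.
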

\begin{proof}
	See 3.10 and 5.3 in Chapter II. in \cite{neukirch}.
\end{proof}

\begin{theorem} \label{log}
	For a non-archimedean local field $A$ of characteristic 0 there is an isomorphism
	\begin{equation*}
	\exp : \uu^{(n)} \to \mm^n
	\end{equation*}
	with the inverse $\log$ for $n >\frac{e}{p-1}$ where $p$ is the characteristic of the residue field and $e$ is the normalized valuation of $p$. The functions are given by the power series
	\begin{equation*}
	\exp(x) = \sum_{k=0}^\infty \frac{x^k}{k!}, \quad \log(1+x) = \sum_{k=0}^\infty \frac{(-1)^kx^k}{k}
	\end{equation*}
	which converge on their domain of definition.
\end{theorem}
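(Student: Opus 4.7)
The plan is to prove this in three stages: convergence of the two power series on the stated domains, the fact that they take values in the correct target groups, and the homomorphism and mutual inverse properties.

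First I would set up the key valuation estimate. Writing $v$ for the normalized valuation (so $v(p) = e$), Legendre's formula adapted to base $p$ gives $v(k!) = \frac{e(k - s_p(k))}{p-1} \leq \frac{e(k-1)}{p-1}$, where $s_p(k)$ denotes the digit sum of $k$ in base $p$. For $x \in \mm^n$ and $n > \frac{e}{p-1}$ this yields $v(x^k/k!) \geq k\bigl(n - \frac{e}{p-1}\bigr) \to \infty$, and similarly $v(x^k/k) \to \infty$ since $v(k) = O(\log k)$; completeness of $A$ then ensures that both series converge.

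Next I would verify that $\exp$ maps $\mm^n$ into $\uu^{(n)}$ and $\log$ maps $\uu^{(n)}$ into $\mm^n$. Since $\exp(x) - 1 = x + \sum_{k \geq 2} x^k/k!$, it suffices to show $v(x^k/k!) > v(x)$ for $k \geq 2$, i.e.\ $(k-1)v(x) > v(k!)$. Using $v(x) \geq n > \frac{e}{p-1}$ together with the bound on $v(k!)$ above, this follows. The case where the hypothesis is tight is $k = p$, where one needs $(p-1)n > v(p) = e$, which is precisely $n > \frac{e}{p-1}$. The analogous check for $\log$ reduces to $(k-1)n > v(k)$, with the same critical case $k = p$.

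Finally, the identities $\exp(x+y) = \exp(x)\exp(y)$, $\log((1+x)(1+y)) = \log(1+x) + \log(1+y)$, and the mutual inverse relations hold as formal identities in $\qq[[X,Y]]$ and $\qq[[X]]$. Since the double series involved converge absolutely in the non-archimedean sense, rearrangement is legitimate and the formal identities transport to actual equalities in $A$. I expect the technical heart of the argument to be the dominance estimate of step two, where the strict inequality $n > \frac{e}{p-1}$ is used in an essential way; convergence is routine once the bound on $v(k!)$ is in hand, and step three is a standard transfer-of-formal-identities argument.
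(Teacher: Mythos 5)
Your argument is correct and is essentially the standard proof: the paper itself gives no argument for this theorem, deferring to Neukirch, Chapter II, Proposition 5.5, whose proof proceeds exactly as you outline (the valuation of $k!$ via the base-$p$ digit-sum formula, the dominance estimate with critical term $k=p$ isolating where $n > \frac{e}{p-1}$ is needed, and transfer of the formal power-series identities using non-archimedean absolute convergence). The only discrepancy is with the statement as printed, which writes $\exp : \uu^{(n)} \to \mm^n$ with the source and target swapped; your proof correctly treats $\exp$ as the map $\mm^n \to \uu^{(n)}$ with inverse $\log$.
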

\begin{proof}
	Propositition 5.5 in Chapter II: in \cite{neukirch}.
\end{proof}
We can put this together with the fact $\mm^n = (\pi^n)\uu \cong \ok \cong \zz_p^{[A/\qq_p]}$ as additive groups to get
\begin{theorem}
	\label{unit}
	For a local field $A$ of characteristic 0 and its ring of integers $\ok_A$ we have:
	\begin{align*}
	&	A^\times \cong   \zz \times \zz / (q-1)\zz \times \zz / (p^a)\zz \times \zz_p^d \\
	& \ok_A^\times \cong \zz / (q-1)\zz \times \zz / (p^a)\zz \times \zz_p^d
	\end{align*}
	where p is the characteristic of the residue field and q is its order, d is the degree of $A$ over $\qq_p$ and $a$ is some integer.
\end{theorem}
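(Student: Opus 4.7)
The plan is to build the decomposition up from the inside out, first applying Theorem \ref{mod} to peel off the $\zz$ factor (from the uniformizer) and the $\zz/(q-1)\zz$ factor (from the residue field units), and then studying the principal unit group $\uu^{(1)}$ using Theorem \ref{log}. Concretely, Theorem \ref{mod} gives
\begin{equation*}
A^\times \cong (\pi) \times \uu/\uu^{(1)} \times \uu^{(1)},
\end{equation*}
where $(\pi) \cong \zz$ and $\uu/\uu^{(1)} \cong \kappa_A^\times$ is cyclic of order $q-1$. So the task reduces to proving $\uu^{(1)} \cong \zz/p^a\zz \times \zz_p^d$ for some $a \geq 0$.

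For this, I would fix an integer $n > e/(p-1)$ and invoke Theorem \ref{log}, which yields $\uu^{(n)} \cong \mm^n$ as abelian groups. Since $\mm^n = \pi^n \ok_A$ is isomorphic to $\ok_A$ as an additive group, and $\ok_A$ is a free $\zz_p$-module of rank $d = [A:\qq_p]$, we obtain $\uu^{(n)} \cong \zz_p^d$. By Theorem \ref{mod} the successive quotients $\uu^{(i)}/\uu^{(i+1)}$ for $1 \leq i < n$ are finite, so $\uu^{(1)}/\uu^{(n)}$ is finite, i.e.\ $\uu^{(n)}$ sits inside $\uu^{(1)}$ with finite index.

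This makes $\uu^{(1)}$ a finitely generated $\zz_p$-module, so the structure theorem yields $\uu^{(1)} \cong \zz_p^r \oplus T$ with $T$ a finite $p$-group. Comparing $\zz_p$-ranks via the finite-index inclusion $\uu^{(n)} \cong \zz_p^d \hookrightarrow \uu^{(1)}$ forces $r = d$. The torsion $T$ is a finite subgroup of $A^\times$, hence cyclic --- say of order $p^a$. Putting everything together proves the claim for $A^\times$, and dropping the $(\pi) \cong \zz$ factor gives the statement for $\ok_A^\times = \uu$.

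The main obstacle is the structural input at the end: one must appeal to the structure theorem for finitely generated $\zz_p$-modules and to the classical fact that any finite subgroup of the multiplicative group of a field is cyclic. Everything else is a bookkeeping assembly of Theorems \ref{mod} and \ref{log} together with the additive freeness of $\ok_A$ over $\zz_p$.
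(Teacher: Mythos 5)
Your proposal is correct and follows essentially the same route as the paper, which sketches exactly this assembly (Theorem \ref{mod} plus the $\log$ isomorphism plus $\mm^n \cong \ok_A \cong \zz_p^d$) and then defers to Neukirch II.5.7 for the details you supply, namely the finite-index argument, the structure theorem over $\zz_p$, and the cyclicity of the torsion. The only point you gloss over is why $\uu^{(1)}$ carries a $\zz_p$-module structure at all (it is a pro-$p$ group, being the inverse limit of the finite $p$-groups $\uu^{(1)}/\uu^{(n)}$), but this is standard and easily inserted.
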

\begin{proof}
	Proposition 5.7 in Chapter II. in \cite{neukirch}.
\end{proof}

\section{Extensions of local fields}
For an extension of number fields $L/K$ and primes $\mathfrak{q}$ of $L$ above $\pp$ (this means that $\pp = \ok_K \cap \mathfrak{q}$), the local fields $L_\mathfrak{q}$ and $K_\pp$ form an extension with properties that tells us information about the primes.
For proofs of these theorems see Chapter 7 and 8 of \cite{ant}.
\begin{definition}
	If $A/B$ is an extension of non-archimedean characteristic 0 local fields with normalized valuations and uniformizers $\pi_A,\pi_B$,\\ then we define the degree of inertia as the degree of the extension of finite fields $f_{A/B} = [\kappa_A/\kappa_B]=[(\ok_A/(\pi_A))/(\ok_B/(\pi_B))]$ and the ramification index as $e_{A/B}=\nu_A(\pi_B)$. An extension is \emph{unramified} if the degree of ramification is 1.
\end{definition}
\begin{theorem}[Local field extensions] \label{extension}
	For an extension of number fields $L/K$ and nonzero primes $\pp \subset \ok_K$ and $\mathfrak{q} \subset \ok_L$ such that $\mathfrak{q} \cap \ok_K=\pp$ we have an extension of local fields $L_\mathfrak{q}/K_\pp$. If $\mathfrak{q}_i$ are all the primes of $L$ above $\pp$, then $L \otimes_K K_\pp \cong \prod L_{\mathfrak{q}_i}$. Furthermore the degree of inertia of the local field extension is equal to the degree of inertia of the primes and the same for the ramification index.
\end{theorem}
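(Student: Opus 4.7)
The plan is to handle the three claims in sequence. First, for the extension of local fields itself: because $\mathfrak{q}\cap\ok_K=\pp$, the restriction to $K$ of the $\mathfrak{q}$-adic absolute value on $L$ is equivalent to $|-|_\pp$, so the inclusion $K\hookrightarrow L$ is uniformly continuous with respect to these metrics and extends uniquely to a continuous embedding of completions $K_\pp\hookrightarrow L_\mathfrak{q}$. Finiteness of the degree $[L_\mathfrak{q}:K_\pp]$ falls out of the dimension count in the next step.

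For the tensor decomposition, I would invoke the primitive element theorem to write $L=K(\alpha)\cong K[x]/(f(x))$ with $f$ the minimal polynomial of $\alpha$. Tensor product then gives
\begin{equation*}
L\otimes_K K_\pp \cong K_\pp[x]/(f(x)).
\end{equation*}
Factor $f=\prod_i f_i$ into monic irreducibles in $K_\pp[x]$; these factors are pairwise coprime since $L/K$ is separable, so the Chinese Remainder Theorem produces an isomorphism with $\prod_i K_\pp[x]/(f_i(x))$, a product of finite field extensions $E_i/K_\pp$. Each $E_i$ is complete and carries a unique absolute value extending $|-|_\pp$, and its restriction along $L\hookrightarrow L\otimes_K K_\pp\twoheadrightarrow E_i$ is equivalent to the $\mathfrak{q}_i$-adic absolute value for exactly one prime $\mathfrak{q}_i$ of $L$ above $\pp$; conversely every completion $L_{\mathfrak{q}_i}$ arises this way because it is a finite extension of $K_\pp$ containing $\alpha$, hence a quotient of $K_\pp[x]/(f(x))$. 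The identification $E_i\cong L_{\mathfrak{q}_i}$ follows from the density of $L$ in $L\otimes_K K_\pp$ combined with the uniqueness of the extension of $|-|_\pp$ to a complete extension.

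This middle step is where the real work lies, and I expect it to be the principal obstacle: establishing that distinct irreducible factors of $f$ give inequivalent extensions of $|-|_\pp$, and that every prime of $L$ above $\pp$ is realised, is essentially the content of Hensel's lemma together with the uniqueness of absolute value extension on complete fields. The rest of the theorem is mostly bookkeeping.

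Finally, for the invariants, completion of a discretely valued field does not change the residue field, since every class in $\ok_L/\mathfrak{q}$ is represented by an element of $\ok_L$ itself, giving canonical identifications $\kappa_{L_\mathfrak{q}}\cong \ok_L/\mathfrak{q}$ and $\kappa_{K_\pp}\cong \ok_K/\pp$. Therefore
\begin{equation*}
f_{L_\mathfrak{q}/K_\pp}=[\kappa_{L_\mathfrak{q}}:\kappa_{K_\pp}]=[\ok_L/\mathfrak{q}:\ok_K/\pp]=f_{\mathfrak{q}/\pp}.
\end{equation*}
For ramification, a uniformizer $\pi\in\ok_K$ of $\pp$ remains a uniformizer of $K_\pp$, and its normalised $\mathfrak{q}$-adic valuation in $L_\mathfrak{q}$ equals $\nu_\mathfrak{q}(\pi)=e_{\mathfrak{q}/\pp}$ by construction of $\nu_\mathfrak{q}$, so $e_{L_\mathfrak{q}/K_\pp}=e_{\mathfrak{q}/\pp}$. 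Both equalities are formal once the correspondence between the $\mathfrak{q}_i$ and the irreducible factors of $f$ has been pinned down in the previous step.
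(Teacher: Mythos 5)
The paper does not actually prove this statement; its ``proof'' is a pointer to Proposition 8.2 of \cite{ant}, and your sketch is precisely the standard argument that such references give: extend $|-|_\mathfrak{q}$ restricted to $K$ to the completions, write $L=K[x]/(f)$, factor $f$ over $K_\pp$ and apply the Chinese Remainder Theorem, then match factors with primes and read off $e$ and $f$ locally. So in approach you are aligned with the intended source, and the outline is correct.

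The one place where your write-up stops short of a proof is exactly the step you flag as the principal obstacle, and it is worth recording how it closes, since ``Hensel plus uniqueness of extensions'' is not quite the whole story. Given two irreducible factors $f_i\neq f_j$, the fields $E_i=K_\pp[x]/(f_i)$ and $E_j=K_\pp[x]/(f_j)$ may well be abstractly isomorphic; what you must show is that the two composites $L\to E_i$ and $L\to E_j$ induce inequivalent absolute values on $L$. The argument: if they induced equivalent absolute values, the two completions of $L$ would be isomorphic over $L$, and since $E_i$ is generated over $K_\pp$ by the image of $\alpha$ (it is complete, hence closed, and contains the dense subfield $L$), this isomorphism is also $K_\pp$-linear and carries the image of $\alpha$ in $E_i$ to the image of $\alpha$ in $E_j$; these then have the same minimal polynomial over $K_\pp$, forcing $f_i=f_j$. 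Conversely, every prime $\mathfrak{q}$ above $\pp$ gives an absolute value on $L$ extending $|-|_\pp$ whose completion is $K_\pp(\alpha)$, hence a quotient of $K_\pp[x]/(f)$, so the correspondence is a bijection. With that inserted, your dimension count $\sum_i[E_i:K_\pp]=\deg f=[L:K]$ and the final identifications of residue fields and uniformizer valuations go through as you state.
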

\begin{proof}
	See Proposition 8.2 in \cite{ant}.
\end{proof}
\begin{theorem}
	\label{unram}
	Finite unramified extensions $L/K$ of non-archimedean local field $K$ correspond to finite extensions of the residue field $\kappa_K = \ok_K/(\pi_K)$ by sending $L \mapsto \kappa_L = \ok_L/(\pi_L)$ and if $\kappa_L = \kappa_K[X]/(f(X))$, then $L = K[X]/(\overline{f}(X))$ for some lift $\overline{f}(X)$ of $f(X)$ to $K[X]$.
\end{theorem}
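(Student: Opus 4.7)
The plan is to split the correspondence into surjectivity (every finite extension of $\kappa_K$ arises as some $\kappa_L$) and injectivity (up to $K$-isomorphism) on the set of unramified finite extensions.

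For surjectivity, I would start with a finite extension $\kappa/\kappa_K$. Since $\kappa$ is finite, its multiplicative group is cyclic, so $\kappa = \kappa_K(\alpha)$ for a single $\alpha$ with monic irreducible minimal polynomial $f \in \kappa_K[X]$. Choose any monic lift $\overline{f} \in \ok_K[X]$ of the same degree and set $L := K[X]/(\overline{f})$. To see that $\overline{f}$ is irreducible over $K$, note that by Gauss's lemma applied to a monic polynomial, any factorization in $K[X]$ can be taken inside $\ok_K[X]$, whose reduction mod $\pi_K$ would factor $f$ nontrivially. Hence $L$ is a field extension of $K$ of degree $\deg f$. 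The image of $X$ in $L$ lies in $\ok_L$ and its reduction generates a copy of $\kappa_K[X]/(f) \cong \kappa$ inside $\kappa_L$, giving $[\kappa_L:\kappa_K] \geq \deg f = [L:K]$. Combined with the fundamental identity $e_{L/K} f_{L/K} = [L:K]$ for a local extension, this forces $e_{L/K}=1$ and $\kappa_L \cong \kappa$.

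For injectivity, suppose $L_1$ and $L_2$ are unramified finite extensions of $K$ with a common residue field $\kappa$. Pick a generator $\alpha \in \kappa$ over $\kappa_K$ with minimal polynomial $f$, and fix a monic lift $\overline{f} \in \ok_K[X]$. In each $L_i$, choose an element reducing to $\alpha$ modulo $\pi_{L_i}$; since $f$ is separable over the finite field $\kappa_K$, its derivative does not vanish at $\alpha$, and Hensel's lemma upgrades this approximate root to an exact root $\alpha_i \in \ok_{L_i}$ of $\overline{f}$. This yields $K$-embeddings $K[X]/(\overline{f}) \hookrightarrow L_i$ which must be isomorphisms by a dimension count: both sides have $K$-dimension $\deg f = [\kappa:\kappa_K] = [L_i:K]$, where the last equality uses the unramified hypothesis together with $e_{L_i/K} f_{L_i/K} = [L_i:K]$. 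Composing these isomorphisms gives $L_1 \cong L_2$ over $K$, and simultaneously produces the presentation $L = K[X]/(\overline{f})$ claimed in the statement.

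The main obstacle I foresee is making the lifting steps rigorous: ensuring that $\overline{f}$ remains irreducible in $K[X]$ and that a generator of $\kappa$ lifts to a true root in $\ok_L$. Both rely crucially on Hensel's lemma together with the separability of the residue field extension (automatic here because $\kappa_K$ is finite). Once those are in hand, the remainder is bookkeeping with the identity $e_{L/K} f_{L/K} = [L:K]$, which is available for finite extensions of complete discretely valued fields and which the paper has implicitly built into the definitions above.
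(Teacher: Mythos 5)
Your proof is correct: the paper itself gives no argument here (it simply cites Proposition 7.50 of \cite{ant}), and what you have written is the standard proof that reference contains — lift a generator of the residue extension, use Gauss's lemma for monic polynomials over the integrally closed ring $\ok_K$ to get irreducibility of $\overline{f}$, and use Hensel's lemma (valid since $\kappa_K$ is finite, hence the residue extension is separable) together with $e_{L/K}f_{L/K}=[L:K]$ for the dimension counts. No gaps; the only implicit points (that the image of $X$ is integral over $\ok_K$ and hence lies in $\ok_L$, and that $L_i$ is complete so Hensel applies) are routine.
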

\begin{proof}
	Proposition 7.50 in \cite{ant}.
\end{proof}
\section{Id\`{e}les}
We follow Chapter VI of \cite{neukirch}.
We have a number field $K$ and the set of its places $\pp$ and the corresponding local fields $K_\pp$. The group of id\`{e}les is an object $\ii_K$ that collects all the local fields into one. 
\begin{definition}
	The group of id\`{e}les is defined
	\begin{equation*}
	\ii_K = \hat{\prod}_{\pp \text{ places}} K_\pp^\times = \lbrace (a_\pp) \in \prod_{\pp \text{ places}} K_\pp^\times | \text{all but finitely many  } a_\pp \text{ are in } \uu_\pp \rbrace
	\end{equation*}
\end{definition}
We can put a topology on id\`{e}les by defining a system of neighbourhoods of 1 to be the sets:
\begin{equation*}
\prod_{\pp \in S} W_\pp \times \prod_{\pp\notin S} \uu_\pp
\end{equation*}
where $S$ is a finite set of places including the infinite ones and $W_\pp$ are systems of neighbourhoods of 1 in $K^\times_\pp$. Systems of neighbourhoods of other points are obtained by translations.
\\
From the inclusions $K \subset K_\pp$ we get the diagonal embedding $K^\times \to \ii_K$. The quotient $\ii_K / K$ is called the id\`{e}le class group $C_K$ and it inherits the quotient topology from $\ii_K$. There is a natural homomorphism $\ii_K \to J_K, (a_i) \mapsto \prod_{\pp \text{ finite}} \pp^{\nu_\pp(a_\pp)}$ to the group of fractional ideals. It is obviously surjective. This homomorphism factorizes to a surjection $C_K \to Cl_K$ since $K^\times$ maps exactly to principal fractional ideals.

For a set of places $S$ we have a subgroup of id\`{e}les $\ii^S_K = \prod_{\pp \in S} K_\pp^\times \times \prod_{\pp \notin S} \uu_\pp$. If $S$ is the set of infinite places we will denote it by $\ii^\infty_K$. 
\begin{theorem}
	There is an exact sequence
	\\
	\begin{equation*}
	\xymatrix{
		0 \ar[r] & \ii^\infty_KK^\times/K^\times \ar[r] & C_K \ar[r] & Cl_K\ \ar[r] & 0
	}
	\end{equation*}
\end{theorem}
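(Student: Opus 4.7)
The plan is essentially to unravel the definitions and compute the kernel of the ideal-class map explicitly. Write $\varphi\colon \ii_K\to J_K$ for the surjection $(a_\pp)\mapsto \prod_{\pp\text{ finite}}\pp^{\nu_\pp(a_\pp)}$ already introduced in the text, and $\pi\colon J_K\to Cl_K$ for the quotient by principal ideals. The composition $\pi\circ\varphi$ kills $K^\times$, since for $x\in K^\times$ the associated fractional ideal $\prod_{\pp}\pp^{\nu_\pp(x)}$ is principal by construction. Hence $\pi\circ\varphi$ factors through a homomorphism $\psi\colon C_K\to Cl_K$, which will be the right-hand map in the sequence.

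Next I would verify surjectivity of $\psi$. Any fractional ideal $\mathfrak{a}=\prod\pp^{n_\pp}$ has finite support $\{\pp_1,\dots,\pp_r\}$, so setting $a_{\pp_i}=\pi_{\pp_i}^{n_{\pp_i}}$ and $a_\pp=1$ elsewhere yields an idèle with $\varphi((a_\pp))=\mathfrak{a}$; this already shows $\varphi$, hence $\psi$, is surjective. Injectivity of the leftmost map in the sequence is automatic, because it is just the inclusion $\ii^\infty_KK^\times/K^\times\hookrightarrow \ii_K/K^\times=C_K$.

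The core step is identifying $\ker\psi$. If an idèle class $[(a_\pp)]$ satisfies $\psi([(a_\pp)])=0$, then $\varphi((a_\pp))=(x)$ for some $x\in K^\times$. Consider the idèle $(b_\pp):=(a_\pp/x)$, which represents the same class in $C_K$. For every finite place $\pp$,
\begin{equation*}
\nu_\pp(b_\pp)=\nu_\pp(a_\pp)-\nu_\pp(x)=0,
\end{equation*}
so $b_\pp\in\uu_\pp$ at all finite places, while at infinite places $b_\pp\in K_\pp^\times$ trivially. Thus $(b_\pp)\in\ii^\infty_K$ and $[(a_\pp)]=[(b_\pp)]\in\ii^\infty_KK^\times/K^\times$. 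Conversely, any element of $\ii^\infty_KK^\times$ has unit components at every finite place after dividing by its $K^\times$-part, so $\varphi$ sends it to the trivial ideal, and $\psi$ annihilates its class. This gives the exact equality $\ker\psi=\ii^\infty_KK^\times/K^\times$ and completes the sequence.

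No single step is truly an obstacle; the only place requiring a mild argument is the kernel computation, and even there the key observation is the trivial cancellation $\nu_\pp(a_\pp/x)=0$. Everything else is formal manipulation of the diagonal embedding and the topology is not needed for the exactness statement itself.
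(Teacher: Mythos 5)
Your proof is correct, and it is the standard direct verification: the paper itself offers no argument here, only a citation to Neukirch (Chapter VI, Proposition 1.3), where essentially the same computation appears. The one step you pass over silently --- that $\prod_\pp \pp^{\nu_\pp(a_\pp)} = \prod_\pp \pp^{\nu_\pp(x)}$ forces $\nu_\pp(a_\pp)=\nu_\pp(x)$ at every finite place --- is justified by unique factorization of fractional ideals in a Dedekind domain, so nothing is missing.
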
 
\begin{proof}
	See Proposition 1.3 in Chapter VI. in \cite{neukirch}.
\end{proof}
Note that by the second isomorphism theorem $\ii^\infty_K K^\times / K^\times \cong \ii^\infty_K/(\ii^\infty_K\cap K^\times)$.

Id\`{e}les can be used to conveniently formulate the main result of class field theory: the global Artin reciprocity. 
\begin{theorem}[Artin reciprocity]
	There is a continuous homomorphism from the id\`{e}le class group of a number field to the abelization of its absolute Galois group
	\begin{equation*}
	C_K \to \mathrm{Gal}(K)^{ab}
	\end{equation*}
	called the Artin map.
	Furthermore this homomorphism is surjective and its kernel is the largest connected component of $C_K$ that includes 1.
\end{theorem}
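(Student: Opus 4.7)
The plan is to assemble the global Artin map from local class field theory and then verify its global properties. First, I would invoke local class field theory, which for each place $\pp$ of $K$ provides a continuous homomorphism $\phi_\pp : K_\pp^\times \to \mathrm{Gal}(K_\pp^{ab}/K_\pp)$ that sends a uniformizer to a lift of the Frobenius on the maximal unramified subextension and carries $\uu_\pp$ isomorphically onto the inertia subgroup. Composing with the canonical maps $\mathrm{Gal}(K_\pp^{ab}/K_\pp) \to \mathrm{Gal}(K)^{ab}$ determined by a choice of embedding $\overline{K} \hookrightarrow \overline{K_\pp}$ yields local pieces that assemble into $\phi = \prod_\pp \phi_\pp : \ii_K \to \mathrm{Gal}(K)^{ab}$. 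The product is well defined modulo any open subgroup of $\mathrm{Gal}(K)^{ab}$ because such a subgroup has finite index and in the corresponding finite abelian extension almost all primes are unramified, so almost all unit factors map trivially.

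The heart of the argument is to show that $\phi$ kills the diagonally embedded $K^\times$, so that it descends to $C_K$. This is the global reciprocity law and the deepest input in the proof. I would invoke this result from Chapter VI of \cite{neukirch} rather than reproduce it; the classical route reduces to the cyclotomic case, where the reciprocity is a direct calculation with the Artin symbol in $\qq(\zeta_n)/\qq$, and then extends to arbitrary abelian extensions via the Brauer--Hasse--Noether theorem on splitting of central simple algebras. Alternatively, one may proceed cohomologically by computing $H^2$ of the idèle class group.

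Once the map factors through $C_K$, continuity is formal: every open subgroup of $\mathrm{Gal}(K)^{ab}$ has finite index and corresponds via Galois theory to a finite abelian extension $L/K$; its preimage under $\phi$ is precisely the norm subgroup $N_{L/K}(C_L)$, which is open in $C_K$. Surjectivity reduces, by passing to the inverse limit over finite abelian $L/K$, to surjectivity of $C_K/N_{L/K}(C_L) \to \mathrm{Gal}(L/K)$; this follows from the Chebotarev density theorem, since Frobenius elements at primes of $K$ unramified in $L$ generate $\mathrm{Gal}(L/K)$ and are in the image of the local uniformizers at those primes. For the kernel, one first observes that the archimedean components are connected or have small connected component (the factors $\cc^\times$ are connected, and the factor $\rr^\times$ contributes its connected component $\rr^\times_{>0}$), so the connected component of $1$ in $C_K$ lies in $\ker\phi$. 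The reverse inclusion uses the existence theorem: every open subgroup of finite index in $C_K$ is a norm group, so $\ker\phi$ is contained in the intersection of all such subgroups, which coincides with the connected component of the identity.

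The main obstacle is clearly the reciprocity step, that is, showing principal idèles lie in the kernel; everything else is formal consequence of local class field theory together with the existence theorem. Both of those inputs are themselves deep but better established, whereas global reciprocity is the core substance of the theorem. In a self-contained exposition of the present length I would, as the excerpt does, only cite the statement and refer the reader to \cite{neukirch} for the full construction.
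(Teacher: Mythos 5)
The paper does not actually prove this statement; its ``proof'' is a one-line citation to Chapter V.5 of \cite{cft}, and your closing remark that in an exposition of this length one should only cite the result is exactly what the paper does. Your outline of what lies behind the citation is the standard local-to-global construction and is essentially correct: assemble $\phi=\prod_\pp \phi_\pp$ on $\ii_K$ from the local reciprocity maps (well defined since almost all components lie in $\uu_\pp$ and almost all primes are unramified in any fixed finite abelian extension), establish that $K^\times$ dies (the genuinely deep step, via the cyclotomic/Brauer--Hasse--Noether route or cohomologically), and deduce continuity from the norm subgroups being open, surjectivity from Chebotarev, and the kernel description from the existence theorem together with the fact that the intersection of all open finite-index subgroups of $C_K$ is the connected component of the identity. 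One small point to tighten: surjectivity at each finite level only gives \emph{dense} image in the profinite group $\mathrm{Gal}(K)^{ab}$; to conclude genuine surjectivity you should note that the image is closed, e.g.\ because $C_K \cong \rr_{>0}\times C_K^1$ with $C_K^1$ compact and the connected factor $\rr_{>0}$ already landing in the kernel, so the image equals the image of the compact group $C_K^1$. With that remark added, your sketch is a faithful summary of the argument the paper delegates to the literature.
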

\begin{proof}
	Chapter V.5 of \cite{cft}.
\end{proof}
This powerful result tells us that abelian extensions of $K$ can be described by the id\`{e}le class group.

\section{Field extensions} \label{section}
In this section we will assume that $K$ is an imaginary quadratic number field and that the class number of $K$ is odd. 

We can use the Artin reciprocity to classify all quadratic extensions of an imaginary quadratic number field $K$. All quadratic extensions are abelian. By Galois theory they correspond to index 2 (necessarily normal) closed (equivalently open) subgroups of $G(K)$, the absolute Galois group of $K$. All finite extensions of $K$ correspond to closed subgroups of $G(K)$ with finite index. Its commutator $\Gamma = [G(K),G(K)]$ is a normal subgroup which is contained in all normal subgroups $H$ such that $G(K)/H$ is abelian. So all index 2 closed subgroups correspond to index 2 closed subgroups that lie in $G(K)^{ab} = G(K)/ \Gamma$ or equivalently continuous surjective homomorphisms that lie in  $\mathrm{Hom}(G(K)^{ab},\zz/2\zz)$. We will call $\mathrm{Hom}_c(G(K)^{ab},\zz/2\zz)$ the subgroup of continuous homomorphisms. Now we use the Artin reciprocity to transform this to homomorphisms to $\zz/2\zz$ from the id\`{e}le class group.

\begin{theorem}
	The inclusion $\ii^\infty_KK^\times/K^\times \to C_K $ induces an isomorphism  \\ $\mathrm{Hom}(C_K,\zz/2\zz) \to \mathrm{Hom}(\ii^\infty_KK^\times/K^\times,\zz/2\zz)$ for $K$ with odd class number (not necessarily imaginary quadratic).
\end{theorem}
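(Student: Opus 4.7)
The plan is to apply the left exact contravariant functor $\mathrm{Hom}(-,\zz/2\zz)$ to the short exact sequence
\begin{equation*}
0 \to \ii^\infty_K K^\times / K^\times \to C_K \to Cl_K \to 0
\end{equation*}
from the previous theorem, and read off the desired isomorphism from the resulting long exact Ext-sequence. The map in question is exactly the middle arrow of that sequence.

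First I would write out the six-term sequence
\begin{equation*}
0 \to \mathrm{Hom}(Cl_K,\zz/2\zz) \to \mathrm{Hom}(C_K,\zz/2\zz) \to \mathrm{Hom}(\ii^\infty_K K^\times/K^\times,\zz/2\zz) \to \mathrm{Ext}^1(Cl_K,\zz/2\zz),
\end{equation*}
so that the goal reduces to showing that both the leftmost Hom and the rightmost Ext vanish. The continuity issue is not a real obstacle here, because $Cl_K$ is a finite (discrete) group, and $\zz/2\zz$ is discrete as well, so every algebraic homomorphism from $Cl_K$ or any extension of it by a finite group is automatically continuous; this lets me work purely in the category of abstract abelian groups.

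Next I would invoke the hypothesis that $h_K = |Cl_K|$ is odd. Since $Cl_K$ is a finite abelian group of odd order, it is a direct sum of cyclic groups $\zz/d_i\zz$ with each $d_i$ odd. Then $\mathrm{Hom}(\zz/d_i\zz,\zz/2\zz) = 0$ because $\gcd(d_i,2)=1$, and similarly $\mathrm{Ext}^1(\zz/d_i\zz,\zz/2\zz) \cong \zz/\gcd(d_i,2)\zz = 0$ by the standard computation from the free resolution $0 \to \zz \xrightarrow{d_i} \zz \to \zz/d_i\zz \to 0$. Summing over $i$ kills both outer terms.

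The main (essentially only) conceptual point is the vanishing of $\mathrm{Ext}^1(Cl_K,\zz/2\zz)$ under the odd class number hypothesis; without it, one would have a genuine obstruction coming from lifting homomorphisms on $\ii^\infty_K K^\times/K^\times$ to $C_K$, as flagged in the Introduction. Once the two vanishings are in hand, exactness of the Hom--Ext sequence forces the middle arrow to be an isomorphism, which is precisely the statement of the theorem.
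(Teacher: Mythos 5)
Your proposal is correct and follows essentially the same route as the paper: apply $\mathrm{Hom}(-,\zz/2\zz)$ to the short exact sequence $0 \to \ii^\infty_KK^\times/K^\times \to C_K \to Cl_K \to 0$ and kill the outer terms $\mathrm{Hom}(Cl_K,\zz/2\zz)$ and $\mathrm{Ext}^1(Cl_K,\zz/2\zz)$ using that $|Cl_K|$ is odd. Your version is in fact slightly more careful than the paper's (you decompose $Cl_K$ into odd cyclic factors and compute $\mathrm{Ext}^1$ from the free resolution, whereas the paper just cites $\mathrm{Ext}^1(\zz/p\zz,A)\cong A/(p)$), but the argument is the same.
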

\begin{proof}
	We have the exact sequence
	\begin{equation*}
	\xymatrix{
		0 \ar[r] & \ii^\infty_KK^\times/K^\times \ar[r] & C_K \ar[r] & Cl_K \ar[r] & 0.
	}
	\end{equation*}
	 We will apply the left exact functor $\mathrm{Hom}(-,\zz/2\zz)$ to the sequence and use the property of $\mathrm{Ext}$ functors:
	\begin{align*}
	\xymatrix{
		\mathrm{Hom}(Cl_K,\zz/2\zz) \ar[r] & \mathrm{Hom}(C_K,\zz/2\zz) \ar[r] &\\
		\ar[r] & \mathrm{Hom}(\ii^\infty_KK^\times/K^\times,\zz/2\zz) \ar[r] & \mathrm{Ext}^1(Cl_K,\zz/2\zz)
	}
	\end{align*}
	
	The edge terms are 0 as we will now show. Because $Cl_K$ is odd \\$	\mathrm{Hom}(Cl_K,\zz/2\zz)=0$. We have $\mathrm{Ext}^1(Cl_K,\zz/2\zz)  =0$ since \\ $\mathrm{Ext}^1(\zz/p\zz,A) \cong A/(p) $ for any group $A$.

\end{proof}

We need to restrict ourselves to continuous homomorphisms. The topology on $\ii_K$ induces the subset topology on $\ii_K^\infty$ and the quotient topology on  $\ii^\infty_K/(\ii^\infty_K\cap K^\times)$.
Note that we have  $\ii^\infty_KK^\times/K^\times \cong  \ii^\infty_K/(\ii^\infty_K\cap K^\times)$.
\begin{theorem}
	There is an isomorphism of continuous homomorphisms\\ $ \mathrm{Hom}_c (C_K,\zz/2\zz) \to \mathrm{Hom}_c (\ii^\infty_K/(\ii^\infty_K\cap K^\times),\zz/2\zz)$ for $K$ with odd class number, where $\mathrm{Hom}_c (A,B)$ is the group of continuous homomorphisms from $A$ to $B$.
\end{theorem}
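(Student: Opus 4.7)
My plan is to upgrade the algebraic isomorphism from the preceding theorem by verifying that both the map and its inverse preserve continuity. First I would note that the canonical identification $\ii^\infty_K K^\times / K^\times \cong \ii^\infty_K / (\ii^\infty_K \cap K^\times)$ from the second isomorphism theorem is a topological isomorphism: both sides carry topologies induced from $\ii_K$ through the inclusion $\ii^\infty_K \to \ii_K$ and the projection $\ii_K \to C_K$, and the latter is an open map because it is the quotient by the translation action of $K^\times$, so images of open sets are unions of translates of open sets. Hence it is enough to work with the subgroup $\ii^\infty_K K^\times / K^\times$ sitting inside $C_K$ with the subspace topology.

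The easy direction is that the restriction of a continuous $\varphi : C_K \to \zz/2\zz$ to $\ii^\infty_K K^\times / K^\times$ is automatically continuous. The crucial ingredient for the reverse direction is that $\ii^\infty_K$ is actually open in $\ii_K$: a basic neighborhood of $1$ with $S$ the set of infinite places and $W_\pp = K_\pp^\times$ is already contained in $\ii^\infty_K$, and an arbitrary point $a \in \ii^\infty_K$ has such a neighborhood translated by $a$, using that $a_\pp \in \uu_\pp$ for almost all $\pp$. Combined with the openness of the quotient $\ii_K \to C_K$, this shows $\ii^\infty_K K^\times / K^\times$ is an open subgroup of $C_K$, of finite index $h_K$.

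Now, given a continuous $\psi : \ii^\infty_K K^\times/K^\times \to \zz/2\zz$, the previous theorem produces a unique algebraic extension $\varphi : C_K \to \zz/2\zz$. Because $\zz/2\zz$ is discrete, continuity of $\varphi$ is equivalent to openness of the subgroup $\varphi^{-1}(0) \subset C_K$. But $\varphi^{-1}(0)$ is a subgroup of $C_K$ containing $\psi^{-1}(0)$, which is open in $\ii^\infty_K K^\times/K^\times$ by continuity of $\psi$, and hence open in $C_K$ since $\ii^\infty_K K^\times / K^\times$ is itself open in $C_K$. Any subgroup of a topological group containing an open subgroup is open (being a union of cosets of it), so $\varphi^{-1}(0)$ is open and $\varphi$ is continuous. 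The only real obstacle is the mild bookkeeping needed to identify the quotient topology on $\ii^\infty_K/(\ii^\infty_K \cap K^\times)$ with the subspace topology on $\ii^\infty_K K^\times/K^\times \subset C_K$; once this identification is in place, the rest is a routine open-subgroup argument.
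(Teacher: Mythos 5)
Your proposal is correct and follows essentially the same route as the paper: establish that $\ii^\infty_K$ is open in $\ii_K$ via the translated basic neighbourhoods, deduce that the corresponding subgroup of $C_K$ is open, and then observe that the kernel of the algebraic extension contains the open kernel of the restriction and is therefore a union of its cosets, hence open. Your added care in checking that the quotient map $\ii_K \to C_K$ is open (so that the two descriptions of the subgroup agree topologically) is a point the paper treats more briefly, but the substance of the argument is the same.
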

\begin{proof}
From the previous theorem we have a bijection of all homomorphisms $\mathrm{Hom}(C_K,\zz/2\zz) \to \mathrm{Hom}(\ii^\infty_KK^\times/K^\times,\zz/2\zz)\cong \mathrm{Hom}(\ii^\infty_K/(\ii^\infty_K\cap K^\times),\zz/2\zz)$.
The inclusion map $\ii^\infty_K/(\ii^\infty_K\cap K^\times) \to C_K$ is continuous, so a continuous homomorphism maps to a continuous homomorphism. So there is a map $\mathrm{Hom}_c (C_K,\zz/2\zz) \to \mathrm{Hom}_c (\ii^\infty_K/(\ii^\infty_K\cap K^\times),\zz/2\zz)$ and it is injective. \\
If the preimage of $\chi \in \mathrm{Hom}_c (\ii^\infty_K/(\ii^\infty_K\cap K^\times),\zz/2\zz) \subset \mathrm{Hom} (\ii^\infty_K/(\ii^\infty_K\cap K^\times),\zz/2\zz)$ is $\nu \in \mathrm{Hom} (C_K,\zz/2\zz)$, it is enough to prove that the kernel of $\nu$ is open so that $\nu$ is continuous. Because $	\ii^\infty_K$ is open in $\ii_K$ (every point $a \in 	\ii^\infty_K$ has an open neighbourhood $a\prod_{\pp \text{ infinite}} K_\pp^\times \times  \prod_{\pp \text{ finite}} \uu_\pp$), we have that $\ii^\infty_K/(\ii^\infty_K\cap K^\times)$ is open in $C_K$ (its preimage in $\ii_K$ is $\ii_K^\infty$). The kernel $H$ of $\chi$ is open in $\ii^\infty_K/(\ii^\infty_K\cap K^\times)$ and thus in $C_K$. The kernel of $\nu$ includes $H$ and so it is a union of cosets of $H$ in $C_K$ and therefore open.
\end{proof}

\begin{lemma} \label{correspond}
	There is a bijection between quadratic extensions of any number field $K$ and continuous surjective homomorphisms $C_K \to \zz/2\zz$ given by the Artin map. 
\end{lemma}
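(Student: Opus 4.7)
The plan is to assemble the lemma out of three bijective translations: (i) quadratic extensions $\leftrightarrow$ continuous surjective homomorphisms $G(K) \to \zz/2\zz$, (ii) these $\leftrightarrow$ continuous surjective homomorphisms $G(K)^{ab} \to \zz/2\zz$, and (iii) these $\leftrightarrow$ continuous surjective homomorphisms $C_K \to \zz/2\zz$ via the Artin map.

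For step (i), I would invoke Galois theory for infinite extensions: quadratic extensions $L/K$ correspond bijectively to index-$2$ closed (equivalently open, since finite index and closed) normal subgroups $H \subset G(K)$, and such an $H$ is the kernel of a unique continuous surjective homomorphism $G(K) \to \zz/2\zz$. For step (ii), since $\zz/2\zz$ is abelian, any homomorphism $G(K) \to \zz/2\zz$ kills the commutator $\Gamma=[G(K),G(K)]$, hence factors uniquely through $G(K)^{ab}=G(K)/\Gamma$; continuity and surjectivity transfer in both directions because $\Gamma$ is closed and the quotient map $G(K)\to G(K)^{ab}$ is continuous and open.

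For step (iii), I would use the Artin reciprocity map $\phi\colon C_K \to G(K)^{ab}$, which is continuous, surjective, and has kernel $D_K$, the connected component of $1$ in $C_K$. Precomposition with $\phi$ gives a homomorphism
\begin{equation*}
\phi^{\ast}\colon \mathrm{Hom}_c(G(K)^{ab},\zz/2\zz) \longrightarrow \mathrm{Hom}_c(C_K,\zz/2\zz),
\end{equation*}
which is injective because $\phi$ is surjective, and preserves surjectivity for the same reason. The image of $\phi^{\ast}$ consists of exactly those continuous $\chi\colon C_K\to\zz/2\zz$ with $D_K\subset\ker\chi$. The crucial observation is that this containment is automatic for every continuous $\chi$: the image $\chi(D_K)$ is the continuous image of a connected space in the discrete group $\zz/2\zz$, hence equals $\{0\}$. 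Therefore $\phi^{\ast}$ is surjective onto $\mathrm{Hom}_c(C_K,\zz/2\zz)$ and is a bijection.

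I expect the main subtlety to be the topological bookkeeping in step (iii), specifically the use of connectedness of $D_K$ combined with discreteness of $\zz/2\zz$ to force factorization; once this is noted, the remaining parts are formal. Composing the bijections of (i)--(iii) yields the claimed bijection, with the Artin map providing the explicit correspondence: the quadratic extension cut out by $\chi\colon C_K\to\zz/2\zz$ is the fixed field of the kernel of the unique continuous homomorphism $G(K)\to\zz/2\zz$ induced by $\chi$ through $\phi$.
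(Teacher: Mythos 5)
Your proposal is correct and follows essentially the same route as the paper: Galois correspondence for index-$2$ open subgroups, passage to $G(K)^{ab}$, the Artin isomorphism $C_K/D_K \cong G(K)^{ab}$, and the observation that any continuous homomorphism to the discrete group $\zz/2\zz$ kills the connected component of $1$. You are somewhat more explicit than the paper about the abelianization step and the image of $\phi^{\ast}$, but the underlying argument is the same.
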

\begin{proof}
	Quadratic extensions of $K$ correspond to open index 2 subgroups of $G(K)$. If $\ok$ is the connected component of 1 in $C_K$, then the Artin map gives an isomorphism $C_K/\ok \to G(K)$. This gives correspondence of open index 2 subgroups of $G(K)$ and $C_K/\ok$. These subgroups of $C_K/\ok$ can be seen as kernels of continuous surjective homomorphisms $C_K/\ok \to \zz/2\zz$. These correspond to continuous surjective homomorphisms $C_K \to \zz/2\zz$ since the connected component of 1 is always mapped to 0 by continuity.
\end{proof}

We will now look at $Hom(\ii^\infty_K,\zz/2\zz)$ with $\ii^\infty_K\cap K^\times$ in the kernel. The elements of $\ii^\infty_K\cap K^\times$ all generate the unit ideal. Therefore they are exactly the units of $\ok_K$ which are $\lbrace +1,-1 \rbrace$ if $K \neq \qq[i],\qq[e^{2\pi/3}]$.
\begin{lemma} \label{technic}
	For an imaginary quadratic number field $K$, continuous homomorphisms $\ii^\infty_K \to \zz/2\zz$ are finite sums of local homomorphisms $\uu_\pp \to \zz/2\zz$ for some distinct primes $\pp$ of $K$. More explicitely $\chi = \sum_\pp \chi_\pp$ where $\chi_\pp$ is $\chi$ composed with the inclusion $\uu_\pp = (1,\dots,\uu_\pp,\dots,1) \to \ii^\infty_K$.
\end{lemma}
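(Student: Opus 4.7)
The plan is to exploit continuity to localise $\chi$ at only finitely many primes, and to handle the archimedean factor separately via connectedness. The argument splits naturally into three steps.

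First, because $K$ is imaginary quadratic there is a unique archimedean place $\pp_\infty$ with $K_{\pp_\infty}^\times \cong \cc^\times$, which is a connected topological group. Since $\zz/2\zz$ is discrete, any continuous homomorphism from a connected group into $\zz/2\zz$ is trivial, so the restriction of $\chi$ to the embedded factor $K_{\pp_\infty}^\times \subset \ii_K^\infty$ vanishes. Thus the archimedean place will contribute nothing to the sum, and all the information of $\chi$ is concentrated on the finite places.

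Second, since $\zz/2\zz$ is discrete, $\ker \chi$ is open in $\ii_K^\infty$ and hence contains a basic neighbourhood of $1$, which upon intersecting with $\ii_K^\infty$ takes the form
\begin{equation*}
U = W_{\pp_\infty} \times \prod_{\pp \in T} (W_\pp \cap \uu_\pp) \times \prod_{\pp \text{ finite},\, \pp \notin T} \uu_\pp
\end{equation*}
for some finite set $T$ of finite primes. For any finite $\pp \notin T$ and any $u \in \uu_\pp$, the id\`ele $e_\pp(u)$ with $u$ at position $\pp$ and $1$ elsewhere lies in $U \subseteq \ker \chi$, so $\chi_\pp := \chi \circ e_\pp$ is identically zero outside $T$. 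In particular the formal sum $\sum_\pp \chi_\pp$ only involves the finitely many $\pp \in T$.

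Third, for an arbitrary $(a_\pp) \in \ii_K^\infty$ we factor
\begin{equation*}
(a_\pp) = e_{\pp_\infty}(a_{\pp_\infty}) \cdot \prod_{\pp \in T} e_\pp(a_\pp) \cdot b,
\end{equation*}
where $b$ has $1$ at $\pp_\infty$ and at every $\pp \in T$ and agrees with $(a_\pp)$ at every remaining finite place. By construction $b \in U$, hence $\chi(b)=0$; combined with the vanishing at $\pp_\infty$ from the first step, this yields $\chi((a_\pp)) = \sum_{\pp \in T} \chi_\pp(a_\pp)$, which is the claimed decomposition.

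The only real obstacle is bookkeeping: one has to carefully unpack the subspace topology that $\ii_K^\infty$ inherits from $\ii_K$ and verify that a basic neighbourhood of the identity has the product form stated above. Once this is in place, connectedness at infinity together with the openness of $\ker\chi$ forces the finite-sum decomposition with no further work.
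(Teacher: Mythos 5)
Your argument for the forward direction is correct and follows essentially the same route as the paper: openness of $\ker\chi$ gives a basic neighbourhood of $1$ forcing $\chi_\pp$ to vanish outside a finite set $T$, and the factorization of an arbitrary id\`ele into its local pieces times a remainder lying in that neighbourhood yields $\chi = \sum_{\pp \in T}\chi_\pp$. Your handling of the archimedean factor via connectedness of $\cc^\times$ is a minor (and arguably cleaner) variant of the paper's argument, which instead uses that every element of $\cc^\times$ is a square. However, the paper's proof also establishes the converse: that every finite sum of local homomorphisms $\sum_{\pp \in S}\chi_\pp$ is itself a \emph{continuous} homomorphism on $\ii_K^\infty$. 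This requires knowing that each local character $\chi_\pp$ has open kernel, i.e.\ contains some $\uu_\pp^{(n_\pp)}$, which the paper defers to Lemma \ref{finite}; without it one only knows that continuous characters sit inside the set of finite sums, not that the two sets coincide. Since this converse is what later lets Theorem \ref{galois} assert a genuine bijection between quadratic extensions and finite sums of local characters, you should add that half of the argument (or at least cite Lemma \ref{finite} for the openness of the kernel of each local summand).
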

\begin{proof}
	Composition with the inclusion $\uu_\pp = (1,\dots,\uu_\pp,\dots,1) \to \ii^\infty_K$ induces homomorphisms $\chi_\pp : \uu_\pp \to \zz/2\zz$.
	The kernel of the homomorphism $\chi : \ii^\infty_K \to \zz/2\zz$ is open, so there is a finite set $S$ of primes such that the kernel includes $\prod_{\pp \in S}  W_\pp \times \prod_{\pp \notin S} \uu_\pp $, so all but finitely many $\chi_\pp$ are trivial and $\chi(\dots,a_\pp, \dots) = \prod_\pp \chi_\pp(a_\pp)$. The local field at the infinite place $\cc^\times $ is always mapped to 0 (since every element in $\cc^\times$ has a square root), so the sum only includes finite primes. \\
	Conversely every finite sum of local homomorphisms gives a continuous homomorphism as we will show. Let $\chi= \sum_{\pp \in S} \chi_\pp$. As we will see in Lemma \ref{finite}, every local homomorphism $\chi_\pp:\uu_{\pp} \to \zz/2\zz$ has some subgroup $\uu_{\pp}^{(n_\pp)}$ in the kernel and if $S$ is the set of primes in the sum, then every element $g$ in the kernel has an open neighbourhood  $g\prod_{\pp \in S}  \uu_{\pp}^{(n_\pp)} \times \prod_{\pp \notin S} \uu_\pp $, so the kernel is open.
\end{proof}
All these theorems can be summarized as:
\begin{theorem} \label{galois}
	Quadratic extensions of an imaginary quadratic number field $K$ with odd class number are in bijection with finite sums $\chi: \ii_K^\infty \to \zz/2\zz$, that can be written as sums  $\chi=\sum_\pp \chi_\pp$ of homomorphisms $\chi_\pp:\uu_\pp \to \zz/2\zz$ with $\ii^\infty_K\cap K^\times$ in the kernel of $\chi$ and $\chi_\pp$ is $\chi$ composed with the inclusion $\uu_\pp = (1,\dots,\uu_\pp,\dots,1) \to \ii^\infty_K$.
\end{theorem}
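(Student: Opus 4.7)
The plan is to simply chain together the results that have already been established in this section; Theorem \ref{galois} is essentially a consolidation rather than a new input, so the proof is a matter of carefully tracking what each arrow does to surjectivity, continuity, and the relevant kernel conditions.

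First I would invoke Lemma \ref{correspond}, which identifies quadratic extensions of $K$ with continuous surjective homomorphisms $C_K \to \zz/2\zz$. Next, since $K$ has odd class number, I would apply the two preceding theorems to get the isomorphism $\mathrm{Hom}_c(C_K,\zz/2\zz) \cong \mathrm{Hom}_c(\ii^\infty_K/(\ii^\infty_K \cap K^\times),\zz/2\zz)$, using the observation recorded in the text that $\ii^\infty_K K^\times/K^\times \cong \ii^\infty_K/(\ii^\infty_K \cap K^\times)$. By the universal property of the quotient, continuous homomorphisms out of $\ii^\infty_K/(\ii^\infty_K \cap K^\times)$ are in bijection with continuous homomorphisms $\chi : \ii^\infty_K \to \zz/2\zz$ satisfying $\chi|_{\ii^\infty_K \cap K^\times} = 0$, which is precisely the kernel condition in the statement.

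Then I would apply Lemma \ref{technic} to rewrite each such $\chi$ as a finite sum $\chi = \sum_\pp \chi_\pp$ of local contributions $\chi_\pp : \uu_\pp \to \zz/2\zz$, where the infinite places contribute nothing because $\cc^\times$ is $2$-divisible. Conversely, the second half of Lemma \ref{technic} (leaning on the yet-to-be-proved Lemma \ref{finite}) guarantees that any finite sum of local homomorphisms assembles back into a continuous homomorphism on $\ii^\infty_K$, so the description in the statement captures all and only the relevant objects.

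The one subtlety worth a sentence of care is surjectivity versus nontriviality: Lemma \ref{correspond} demands that the homomorphism $C_K \to \zz/2\zz$ be surjective, whereas the theorem as stated only asks for the sum decomposition. Since the codomain is $\zz/2\zz$, a homomorphism is surjective iff it is nonzero, so under the bijections above this corresponds to at least one $\chi_\pp$ being nontrivial; I would note this and observe that the trivial homomorphism (the empty sum) corresponds to the trivial ``extension'' $K/K$, which is the only case that has to be remembered when the theorem is used later. Apart from that bookkeeping point, there is no real obstacle; the work has already been done in Lemmas \ref{correspond} and \ref{technic} and in the two isomorphism theorems preceding them.
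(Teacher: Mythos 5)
Your proposal is correct and follows exactly the route the paper intends: the paper offers no separate proof for this theorem (it is introduced with ``All these theorems can be summarized as''), so the argument is precisely the chain of Lemma \ref{correspond}, the two Hom-isomorphism theorems for odd class number, and Lemma \ref{technic} that you assemble. Your remark about surjectivity versus nontriviality is a worthwhile addition, and it matches how the paper later discards the trivial character (corresponding to $K/K$) by subtracting $\mathfrak{d}_K^{-2s}$ from the counting function.
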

\section{Conductors}
Now we need to know how can we compute the discriminant of a number field defined by such homomorphisms. We can do it using conductors. 
\begin{definition}
	For a homomorphism $\chi_\pp : \uu_\pp \to \cc^\times$ we define the local Artin conductor to be the smallest integer $f_{\chi_\pp}$ such that $\uu^{(f_{\chi_\pp})}_\pp \subset \ker \chi_\pp$. We define the global Artin conductor $\mathfrak{f}$ of a homomorphism $\chi : G(K)^{ab} \to \cc^\times$ as $\mathfrak{f} = \prod_{\pp} \pp^{\mathfrak{f}_{\chi_\pp}}$ where $\chi_\pp$ are the homomorphisms induced from $\chi$ by the map $\uu_\pp = (1,\dots,\uu_\pp,\dots,1) \to \ii_K \to C_K \to G(K)^{ab}$.
\end{definition}
We can interprete the group $\zz/n\zz$ as a subgroup of $\cc^\times$ generated by the $n$-th roots of unity.
In our case $\chi$ is a homomorphism $\chi :G(K)^{ab} \to \zz/2\zz \cong \lbrace +1,-1 \rbrace \subset \cc^\times $.
\begin{lemma} \label{finite}
	For a character $\chi : \uu_\pp \to \zz/n\zz$ the conductor is finite.
\end{lemma}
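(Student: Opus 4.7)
The plan is to use Theorem \ref{log} to convert the multiplicative statement into an additive one, and then exploit that the image $\zz/n\zz$ is annihilated by $n$. First I would fix any integer $n_0 > e/(p-1)$, where $p$ is the residue characteristic of $K_\pp$ and $e = \nu_\pp(p)$, so that $\exp$ and $\log$ provide mutually inverse group isomorphisms $\uu_\pp^{(n_0)} \cong \mm_\pp^{n_0}$ that convert multiplication into addition; in particular $\exp(kx) = \exp(x)^k$ for every $k \in \zz$. Since it suffices to produce some $f \geq n_0$ with $\uu_\pp^{(f)} \subset \ker \chi$, I would pass to the restriction of $\chi$ to $\uu_\pp^{(n_0)}$.

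Since $\chi$ takes values in $\zz/n\zz$, for each $u \in \uu_\pp^{(n_0)}$ we have $\chi(u^n) = n\chi(u) = 0$, so the subgroup of $n$-th powers of $\uu_\pp^{(n_0)}$ lies in $\ker \chi$. Under the logarithm this subgroup corresponds to $n \cdot \mm_\pp^{n_0}$. Writing $n = p^a m$ with $\gcd(m,p) = 1$, the factor $m$ acts invertibly on the $\zz_p$-module $\mm_\pp^{n_0}$, while $p \cdot \mm_\pp^j = \mm_\pp^{j+e}$ by definition of $e$; hence $n \cdot \mm_\pp^{n_0} = \mm_\pp^{n_0 + ae}$. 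Transporting back via $\exp$ gives $\uu_\pp^{(n_0 + ae)} \subset \ker \chi$, so $f_{\chi_\pp} \leq n_0 + ae$ is finite.

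I do not expect any real obstacle here: once the exponential reduces the problem to the additive $\zz_p$-module structure of $\mm_\pp$, the remainder is just bookkeeping with valuations. The only subtle point is remembering to stay above the threshold $n_0 > e/(p-1)$ required by Theorem \ref{log}, which is why the bound on the conductor comes out slightly larger than the naive $\nu_\pp(n)$ one might first guess.
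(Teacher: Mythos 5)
Your proof is correct and follows essentially the same route as the paper: both pass through Theorem \ref{log} to linearize the problem, observe that $n$-th powers lie in $\ker\chi$, and conclude that a unit group of level roughly $n_0+\nu_\pp(n)$ is contained in the kernel. Your version is marginally sharper in that you identify the image of the $n$-th power map on $\uu_\pp^{(n_0)}$ exactly as $\uu_\pp^{(n_0+ae)}$ rather than merely exhibiting $n$-th roots, but the underlying idea is identical.
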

\begin{proof}
	We have to show that some $\uu_\pp^{(m)}$ is in the kernel of $\chi$. Since $\zz/n\zz$ is finite, the power $(\uu_\pp)^n$ is in the kernel. From Theorem \ref{log} there is an isomorphism $\log: \uu_\pp^{(k)} \cong (\pi)^k$ for $k$ larger than some constant $l$, where $\pi$ is the uniformizer of $K_\pp$. We find that $\uu_\pp^{(k)} \subset(\uu_\pp)^n$ since every element $\log(a) \in  (\pi)^k$ has an $n$-th root $\log(a)/n \in (\pi)^{k-\nu_\pp(n)} \cong \uu_\pp^{(k-\nu_\pp(n))}$ if we choose $k$ so that $k-\nu_\pp(n) > l$. Therefore all elements in $\uu_\pp^{(k)}$ are $n$-th powers, so $\uu_\pp^{(k)} \subset(\uu_\pp)^n$.
\end{proof}
If the character is a sum, the conductor is computable from the summands.
\begin{theorem} \label{sums}
	Let $\chi$ be a sum of local characters $\chi= \sum_\pp \chi_\pp$ over distinct primes, like in Theorem \ref{galois}. The conductor of $\chi$ is $\prod_\pp \pp^{f_\pp}$, where $f_\pp$ is the local conductor of $\chi_\pp$.
\end{theorem}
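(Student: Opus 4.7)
The plan is to show that the statement reduces to unwinding the definition of the global Artin conductor and verifying that the component of $\chi$ induced by the inclusion $\uu_\pp \hookrightarrow \ii_K^\infty$ agrees with the given summand $\chi_\pp$. Once this identification is checked, the equality $\mathfrak{f}_\chi = \prod_\pp \pp^{f_\pp}$ is immediate.

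\smallskip
First, I would fix notation and translate $\chi$ through the identifications set up in the previous section. Using Lemma \ref{correspond} together with the isomorphism $\mathrm{Hom}_c(C_K,\zz/2\zz) \cong \mathrm{Hom}_c(\ii_K^\infty/(\ii_K^\infty\cap K^\times),\zz/2\zz)$, the character on $G(K)^{ab}$ corresponding to $\chi$ pulls back, via the composition $\uu_{\pp_0} \to \ii_K \to C_K \to G(K)^{ab}$ from the definition of the conductor, to a local character on $\uu_{\pp_0}$. I would call this local character $\tilde{\chi}_{\pp_0}$ and compute it directly: for $u \in \uu_{\pp_0}$, its image in $\ii_K^\infty$ under the inclusion of Theorem \ref{galois} is the id\`ele $\iota_{\pp_0}(u) = (\dots,1,u,1,\dots)$ with $u$ in position $\pp_0$. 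Applying $\chi=\sum_\pp \chi_\pp$ yields
\begin{equation*}
\tilde{\chi}_{\pp_0}(u) = \sum_\pp \chi_\pp(\iota_{\pp_0}(u)_\pp) = \chi_{\pp_0}(u),
\end{equation*}
because $\chi_\pp(1)=0$ for every $\pp \neq \pp_0$ and the $\pp_0$-component of $\iota_{\pp_0}(u)$ is $u$.

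\smallskip
Once $\tilde{\chi}_\pp = \chi_\pp$ has been established for every prime $\pp$, the local conductor occurring in the definition of the global Artin conductor coincides by construction with the local conductor $f_\pp$ of $\chi_\pp$. Substituting into
\begin{equation*}
\mathfrak{f}_\chi \;=\; \prod_\pp \pp^{\,f_{\tilde{\chi}_\pp}} \;=\; \prod_\pp \pp^{\,f_\pp}
\end{equation*}
gives the claim. By Lemma \ref{finite} each $f_\pp$ is finite, and by Lemma \ref{technic} only finitely many $\chi_\pp$ are nontrivial, so $f_\pp=0$ for almost all $\pp$ and the product is well-defined.

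\smallskip
There is no real obstacle beyond keeping track of the identifications; the only subtle point is making sure that the inclusion $\uu_\pp \to \ii_K^\infty$ appearing in Theorem \ref{galois} is compatible with the composition $\uu_\pp \to \ii_K \to C_K \to G(K)^{ab}$ used to define local conductors, which is clear because both are the diagonal embedding placing $u$ in the $\pp$-th coordinate and $1$ elsewhere, and the projection $\ii_K \to C_K$ is the identity on each local factor.
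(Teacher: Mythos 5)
Your proposal is correct and follows the same route as the paper: the paper's proof simply observes that the $\chi_\pp$ are by construction the compositions of $\chi$ with the inclusions $\uu_\pp \to \ii_K$, so the claim holds by the definition of the global Artin conductor. You have merely unwound that definition explicitly (checking $\tilde{\chi}_\pp = \chi_\pp$ and the finiteness of the product), which is a more detailed write-up of the identical argument.
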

\begin{proof}
	The characters $\chi_\pp$ are obtained from $\chi$ by composition with the inclusion $\uu_\pp = (1,\dots,\uu_\pp,\dots,1) \to \ii_K$, so this holds by the definition of the conductor.
\end{proof}

The relationship between conductor and discriminant is given by the following formula.
\begin{theorem} \label{disc}
	For an abelian number field extension $L/K$ the relative discriminant $\mathfrak{d}_{L/K}$ is given by
	\begin{equation*}
	\mathfrak{d}_{L/K} = \prod_{\chi \in \mathrm{Char}(\mathrm{Gal}(L/K))} \mathfrak{f}_\chi 
	\end{equation*}
	where $\mathrm{Char}(G)$ is the set of characters of $G$, that is homomorphisms $G \to \cc^\times $. 
\end{theorem}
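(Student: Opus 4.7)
The plan is to reduce the global equality of ideals to a local statement at each prime of $K$, and then invoke the local conductor-discriminant formula. Since both sides are ideals of $\ok_K$, it suffices to compare $\pp$-adic valuations for each finite prime $\pp$. By the definition of the global conductor,
\begin{equation*}
v_\pp\Bigl(\prod_{\chi \in \mathrm{Char}(G)} \mathfrak{f}_\chi\Bigr) = \sum_{\chi} f_{\chi_\pp},
\end{equation*}
where $G = \mathrm{Gal}(L/K)$ and $\chi_\pp$ is the local character obtained from $\chi$ via $\uu_\pp \to \ii_K \to C_K \to G$. On the other side, Theorem \ref{extension} gives $L \otimes_K K_\pp \cong \prod_{\mathfrak{q}\mid \pp} L_\mathfrak{q}$ with all completions $L_\mathfrak{q}/K_\pp$ mutually isomorphic (since $L/K$ is Galois), so multiplicativity of the discriminant yields
\begin{equation*}
v_\pp(\mathfrak{d}_{L/K}) = g\cdot v_\pp(\mathfrak{d}_{L_\mathfrak{q}/K_\pp}),
\end{equation*}
where $g$ is the number of primes of $L$ above $\pp$.

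The second step is a character count. Because $G$ is abelian, restriction to the decomposition group $D_\pp$ gives a surjection $\mathrm{Char}(G)\to \mathrm{Char}(D_\pp)$ with kernel $\mathrm{Char}(G/D_\pp)$ of size $[G:D_\pp]=g$. Every local character $\psi$ is therefore the restriction of exactly $g$ global characters, and since $f_{\chi_\pp}$ depends only on this restriction,
\begin{equation*}
\sum_{\chi \in \mathrm{Char}(G)} f_{\chi_\pp} = g\sum_{\psi \in \mathrm{Char}(D_\pp)} f_\psi.
\end{equation*}
The factor $g$ cancels on both sides, and the global statement reduces to the local conductor-discriminant identity $v_\pp(\mathfrak{d}_{L_\mathfrak{q}/K_\pp}) = \sum_\psi f_\psi$ for the abelian local extension $L_\mathfrak{q}/K_\pp$.

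The local identity is the substantive part of the argument. I would work with the filtration of $D_\pp$ by higher ramification groups $D_\pp^{(u)}$ in the upper numbering: via local class field theory, the conductor exponent $f_\psi$ equals the least integer $n$ with $\psi$ trivial on $D_\pp^{(n)}$, and integrality of this quantity is ensured by the Hasse-Arf theorem (applicable because $D_\pp$ is abelian). Summing $f_\psi$ over all $\psi \in \mathrm{Char}(D_\pp)$ and interchanging the order of summation rewrites $\sum_\psi f_\psi$ entirely in terms of the sizes of the groups $D_\pp^{(u)}$, which then matches the classical expression for $v_\pp(\mathfrak{d}_{L_\mathfrak{q}/K_\pp})$ in terms of the same filtration. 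The main obstacle is precisely this last local identity, and in particular the use of Hasse-Arf to guarantee that the individual conductor exponents are integers, so that the filtration-theoretic bookkeeping matches the discriminant formula on the nose rather than merely up to rational error.
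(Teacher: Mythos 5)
The paper offers no argument here at all: its ``proof'' is a one-line citation to Neukirch VII.11.9, so any genuine argument you give is necessarily a different route. Your reduction is the standard one and is essentially correct: comparing $\pp$-adic valuations, using $\ok_L \otimes_{\ok_K} \ok_{K_\pp} \cong \prod_{\mathfrak{q}\mid\pp}\ok_{L_\mathfrak{q}}$ to write $v_\pp(\mathfrak{d}_{L/K}) = g\, v_\pp(\mathfrak{d}_{L_\mathfrak{q}/K_\pp})$, and the character count via the surjection $\mathrm{Char}(G)\to\mathrm{Char}(D_\pp)$ (surjective because $\cc^\times$ is divisible, with fibers of size $g=[G:D_\pp]$) correctly cancels the factor $g$ and reduces everything to the local conductor--discriminant identity for the abelian extension $L_\mathfrak{q}/K_\pp$. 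Two points deserve to be made explicit rather than left implicit. First, the claim that $f_{\chi_\pp}$ depends only on $\chi|_{D_\pp}$ rests on the compatibility of local and global class field theory: the composite $\uu_\pp \to \ii_K \to C_K \to G$ is the local reciprocity map followed by the inclusion of the inertia subgroup, and it is this compatibility that identifies the idelic conductor of the paper's definition with the Artin conductor of $\psi=\chi|_{D_\pp}$ computed from the upper-numbering filtration. Second, the local identity $v_\pp(\mathfrak{d}_{L_\mathfrak{q}/K_\pp})=\sum_\psi f_\psi$ is the entire content of the theorem, and you describe a plan for it (sum over $\psi$, interchange with the filtration, invoke Hasse--Arf for integrality) without carrying it out; the bookkeeping that converts $\sum_{i\ge 0}(|G_i|-1)$ in lower numbering into $\sum_\psi f_\psi$ in upper numbering via the Herbrand function is exactly where the work lies. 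As a proof outline this is the right architecture and correctly isolates the crux; as a complete proof it still defers to the same classical local result the paper cites, so in practice you have replaced one citation (Neukirch VII.11.9) by a sharper one (the local conductor--discriminant formula plus Hasse--Arf, e.g.\ Serre, \emph{Corps Locaux}, VI.2--3), together with a correct and self-contained globalization.
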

\begin{proof}
	Can be found in \cite{neukirch}, VII.11.9.
\end{proof}
If $L/K$ is a quadratic extension, $\mathrm{Gal}(L/K) \cong \zz/2\zz$ and there are only 2 characters with one trivial. The trivial character has conductor 1. If $\chi : G(K) \to \zz/2\zz$ is a continuous character and $L$ is the number field corresponding to the open subgroup $\mathrm{ker}(\chi)$ by the Galois correspondence, then it factors as $G(K) \to G(K)/\mathrm{ker}(\chi)=\mathrm{Gal}(L/K) \to \zz/2\zz$ and hence $\chi$ gives the nontrivial character on $\mathrm{Gal}(L/K)$ so we can compute its conductor locally and thus get the relative discriminant of $L/K$. The absolute discriminant of $L$ is then easily computable.
\begin{theorem} \label{discriminant}
	Given a tower of number fields $L/K/S$ then the relative discriminant of $L/S$ can be computed as
	\begin{equation*}
	\mathfrak{d}_{L/S} = N_{K/S}(\mathfrak{d}_{L/K}) \mathfrak{d}_{K/S}^{\deg L/K}	
	\end{equation*}
	where $N_{K/S}(-)$ is the ideal norm of $K/S$.
\end{theorem}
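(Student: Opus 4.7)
The plan is to derive the tower formula from the analogous multiplicativity of the different ideal, using the standard relation $\mathfrak{d}_{L/K} = N_{L/K}(\mathfrak{D}_{L/K})$ between different and discriminant. This reduces the multiplicative behavior in a tower to transitivity of the ideal norm.

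First I would recall the tower formula for differents: for $L/K/S$ one has
\begin{equation*}
\mathfrak{D}_{L/S} = \mathfrak{D}_{L/K} \cdot (\mathfrak{D}_{K/S}\ok_L),
\end{equation*}
which can be quoted from \cite{neukirch} (it is proved locally via the codifferent $\mathfrak{D}_{L/S}^{-1} = \{x \in L : \mathrm{Tr}_{L/S}(x\ok_L) \subset \ok_S\}$ and the chain rule for traces $\mathrm{Tr}_{L/S} = \mathrm{Tr}_{K/S}\circ \mathrm{Tr}_{L/K}$). Applying the ideal norm $N_{L/S}$ to both sides gives
\begin{equation*}
\mathfrak{d}_{L/S} = N_{L/S}(\mathfrak{D}_{L/K})\cdot N_{L/S}(\mathfrak{D}_{K/S}\ok_L).
\end{equation*}

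Next I would simplify each factor using transitivity of the norm, $N_{L/S} = N_{K/S}\circ N_{L/K}$. The first factor becomes
\begin{equation*}
N_{L/S}(\mathfrak{D}_{L/K}) = N_{K/S}\bigl(N_{L/K}(\mathfrak{D}_{L/K})\bigr) = N_{K/S}(\mathfrak{d}_{L/K}).
\end{equation*}
For the second factor, I would use the elementary fact that for any ideal $\mathfrak{a} \subset \ok_K$ one has $N_{L/K}(\mathfrak{a}\ok_L) = \mathfrak{a}^{[L:K]}$ (checkable prime by prime via $\sum e_if_i = [L:K]$), and therefore
\begin{equation*}
N_{L/S}(\mathfrak{D}_{K/S}\ok_L) = N_{K/S}\bigl(N_{L/K}(\mathfrak{D}_{K/S}\ok_L)\bigr) = N_{K/S}(\mathfrak{D}_{K/S})^{[L:K]} = \mathfrak{d}_{K/S}^{\deg L/K}.
\end{equation*}
Combining the two factors yields the claimed identity.

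The main obstacle is the multiplicativity of the different, which is the substantive input; everything else is formal manipulation of ideal norms. Since the statement is classical and referenced in \cite{neukirch}, I would cite it rather than reprove it, and write the proof as a short two-line derivation from the different tower formula and the identity $N_{L/K}(\mathfrak{a}\ok_L) = \mathfrak{a}^{[L:K]}$.
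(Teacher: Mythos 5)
Your proof is correct. The paper itself only cites Neukirch III.2.10 without reproducing the argument, and the derivation you give --- applying $N_{L/S}$ to the tower formula for the different $\mathfrak{D}_{L/S} = \mathfrak{D}_{L/K}\cdot(\mathfrak{D}_{K/S}\ok_L)$, then using transitivity of the ideal norm and $N_{L/K}(\mathfrak{a}\ok_L)=\mathfrak{a}^{[L:K]}$ --- is exactly the proof given in that reference, so this is the same approach, just written out.
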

\begin{proof}
	See \cite{neukirch} III.2.10.
\end{proof} 
Let's calculate the conductor of local fields for our quadratic imaginary number field $K$. We will start with the odd primes.
\begin{lemma} \label{even}
	If $\pp$ is a prime of $K$ not above 2 (i.e. $\pp \cap \zz = (p)$ for an odd prime $p$), then $\uu_\pp \cong  \zz/(p^i-1)\zz \times \zz/p^a\zz \times \zz_p^d$ where $i$ is the degree of inertia of $\pp$ in $K/\qq$. The number $d$ is 1 if $\pp$ is split and 2 if inert or ramified and $a$ is some integer.
\end{lemma}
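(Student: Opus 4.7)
The plan is to apply Theorem \ref{unit} directly to the local field $A = K_\pp$, and then read off each parameter in terms of the splitting data of $\pp$ in $K/\qq$.

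First I would identify the residue characteristic and order. Since $\pp$ lies above the odd rational prime $p$, the residue field $\kappa_\pp = \ok_\pp/\mm_\pp$ has characteristic $p$. By Theorem \ref{extension}, the inertia degree of the local extension $K_\pp/\qq_p$ equals the inertia degree $i$ of $\pp$ in $K/\qq$, so $|\kappa_\pp| = p^i$. Thus $q - 1 = p^i - 1$ in the notation of Theorem \ref{unit}, giving the cyclic factor $\zz/(p^i-1)\zz$, and the $p^a$-torsion factor $\zz/p^a\zz$ appears for some $a$ (depending on the roots of unity of $p$-power order in $K_\pp$, which we need not pin down).

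Next I would compute $d = [K_\pp : \qq_p]$. In general the local degree factors as $d = e_\pp f_\pp$ where $e_\pp = e_{K_\pp/\qq_p}$ and $f_\pp = i$. Since $[K:\qq] = 2$, the fundamental identity $\sum_{\mathfrak q | p} e_{\mathfrak q} f_{\mathfrak q} = 2$ forces exactly three possibilities. If $\pp$ is split, there are two primes above $p$ each with $ef = 1$, so $d = 1$ and $i = 1$. If $\pp$ is inert, then $e = 1$ and $f = i = 2$, so $d = 2$. If $\pp$ is ramified, then $e = 2$ and $f = i = 1$, so again $d = 2$. In each case $d$ matches the claim, and plugging into Theorem \ref{unit} yields the stated decomposition.

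There is no genuine obstacle here; the lemma is a bookkeeping consequence of Theorem \ref{unit} combined with the $ef$-identity for quadratic extensions. The only mildly delicate point is to note that the oddness of $p$ is used implicitly, in that for $p$ odd the $p$-power torsion $\zz/p^a\zz$ really is a separate factor from the prime-to-$p$ part $\zz/(p^i-1)\zz$, so the decomposition as written is unambiguous.
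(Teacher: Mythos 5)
Your proposal is correct and follows essentially the same route as the paper: invoke Theorem \ref{extension} to identify the residue degree and local degree of $K_\pp/\qq_p$ with the splitting data of $\pp$ in $K/\qq$, then read off the factors from Theorem \ref{unit}. Your version is merely more explicit about the identity $\sum e_{\mathfrak q} f_{\mathfrak q} = 2$ that pins down $d$ in each of the three cases.
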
	
\begin{proof}
	We know from Theorem \ref{extension} that $K_\pp$ is an extension of $\qq_p$ and the degree of inertia and ramification is the same as for $\pp$ in $K/\qq$. We will use Theorem \ref{unit}. From this theorem we know that $i$ is the degree of inertia. The number $d$ is the degree of $K_\pp/\qq_p$ is using Theorem \ref{extension} the ramification degree times the inertia degree.
\end{proof} 
We can see that there are only 2 local characters for primes not above 2:
\begin{lemma}
	If $\pp$ is a prime not above 2, then there are two characters on  $\uu_\pp$ one of which is trivial.
\end{lemma}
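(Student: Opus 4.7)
The plan is to apply Lemma \ref{even} to get an explicit product decomposition of $\uu_\pp$, and then count homomorphisms into $\zz/2\zz$ factor by factor, using that $\mathrm{Hom}(A \times B, \zz/2\zz) \cong \mathrm{Hom}(A,\zz/2\zz) \times \mathrm{Hom}(B,\zz/2\zz)$.

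From Lemma \ref{even}, $\uu_\pp \cong \zz/(p^i-1)\zz \times \zz/p^a\zz \times \zz_p^d$ with $p$ an odd prime. The first step is to handle the $\zz/(p^i-1)\zz$ factor: since $p$ is odd, $p^i-1$ is even, so there are exactly two homomorphisms from this factor to $\zz/2\zz$, namely the trivial one and the unique surjection obtained by reducing mod $2$. The second step is to handle the $\zz/p^a\zz$ factor: since $\gcd(p^a,2)=1$, the only homomorphism is trivial. The third step is to handle the $\zz_p^d$ factor: because $p$ is odd, $2$ is a unit in $\zz_p$, so every element of $\zz_p$ is divisible by $2$, and hence any (even not necessarily continuous) homomorphism $\zz_p \to \zz/2\zz$ kills everything.

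Combining these, $\mathrm{Hom}(\uu_\pp,\zz/2\zz) \cong \zz/2\zz$, i.e.\ exactly two characters, one trivial and one nontrivial, as claimed.

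None of the steps is a genuine obstacle; the only mild subtlety is the $\zz_p^d$ factor, where one must notice that $2 \in \zz_p^\times$ for odd $p$ makes $\zz_p$ uniquely $2$-divisible and therefore admits no nontrivial homomorphism to $\zz/2\zz$. Everything else is immediate from the structure theorem.
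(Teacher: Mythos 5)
Your proof is correct and follows essentially the same route as the paper: both use the decomposition $\uu_\pp \cong \zz/(p^i-1)\zz \times \zz/p^a\zz \times \zz_p^d$ from Lemma \ref{even} and rule out nontrivial characters on the last two factors (odd order, respectively $2 \in \zz_p^\times$), leaving exactly one nontrivial character coming from the $\zz/(p^i-1)\zz$ factor. No issues.
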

\begin{proof}
	There are no nontrivial homomorphisms to $\zz/2\zz$ from $\zz/p^a\zz$ since it has odd size and neither from $\zz_p$, since $\frac{1}{2} \in \zz_p$ because $\frac{1}{2} \in \qq_p$ and $\nu_p(\frac{1}{2})=0$ for odd $p$. Finally there is a nontrivial homomorphism from $ \zz/(p^i-1)\zz$ with kernel $ \frac{(p^i-1)}{2}\zz/(p^i-1)\zz$ since $2|(p^i-1)$.
\end{proof}
The conductors of these local characters are easily computable:
\begin{lemma} \label{cond}
	If $\pp$ is prime not above 2, then the local conductor of the trivial local character on $\uu_\pp$ is 0 and for the nontrivial one with kernel $ \frac{(p^i-1)}{2}\zz/(p^i-1)\zz$ it is 1.
\end{lemma}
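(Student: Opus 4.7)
\smallskip

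The plan is to unwind the definition of the local Artin conductor directly from the decomposition of $\uu_\pp$ provided by Lemma \ref{even} together with the filtration identification in Theorem \ref{mod}. Recall that by definition $f_{\chi_\pp}$ is the smallest integer $m$ with $\uu^{(m)}_\pp \subset \ker \chi_\pp$, so the statement reduces to exhibiting the precise filtration level at which the given character becomes trivial.

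For the trivial character $\chi_\pp \equiv 0$, every subgroup of $\uu_\pp$ lies in the kernel; in particular $\uu^{(0)}_\pp = \uu_\pp \subset \ker \chi_\pp$, so the minimum is $m = 0$ and the conductor vanishes. This case is essentially by inspection.

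For the nontrivial character, the key observation is that Theorem \ref{mod} gives $\uu_\pp / \uu^{(1)}_\pp \cong \kappa^\times_\pp$, and since $\pp$ has inertia degree $i$ over $\qq$ the residue field $\kappa_\pp$ has cardinality $p^i$, so this quotient is cyclic of order $p^i - 1$. Comparing with the decomposition $\uu_\pp \cong \zz/(p^i-1)\zz \times \zz/p^a\zz \times \zz_p^d$ coming from Theorem \ref{unit}, the factor $\zz/(p^i-1)\zz$ (the Teichmüller lift of $\kappa_\pp^\times$) projects isomorphically onto $\uu_\pp/\uu_\pp^{(1)}$, while $\uu_\pp^{(1)}$ corresponds exactly to the complementary factor $\zz/p^a\zz \times \zz_p^d$. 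Since the unique nontrivial character $\chi_\pp$ is, by construction, trivial on the $p$-primary part $\zz/p^a\zz \times \zz_p^d$ and factors through the quotient $\zz/(p^i-1)\zz \twoheadrightarrow \zz/2\zz$ (with kernel $\frac{p^i-1}{2}\zz/(p^i-1)\zz$), we get $\uu_\pp^{(1)} \subset \ker \chi_\pp$ and hence $f_{\chi_\pp} \leq 1$. On the other hand $f_{\chi_\pp} \geq 1$ because $\chi_\pp$ is nontrivial on $\uu_\pp = \uu_\pp^{(0)}$. Combining the two bounds yields $f_{\chi_\pp} = 1$.

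There is no real obstacle here once Lemma \ref{even} and Theorem \ref{mod} are in hand; the only point requiring a little care is to match up the abstract decomposition of $\uu_\pp$ from Theorem \ref{unit} with the geometric filtration $\uu_\pp \supset \uu_\pp^{(1)} \supset \cdots$, i.e.\ to verify that the $\zz/(p^i-1)\zz$ summand really is the Teichmüller part mapping isomorphically onto $\kappa^\times_\pp$, and not hidden inside a higher filtration step. This is exactly what the proofs of Theorems \ref{mod} and \ref{unit} cited from Neukirch provide, so the argument is essentially a bookkeeping matter.
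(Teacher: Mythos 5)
Your proof is correct and follows essentially the same route as the paper: both arguments rest on the identification $\uu_\pp/\uu_\pp^{(1)} \cong \zz/(p^i-1)\zz$ from Theorem \ref{mod} to see that the nontrivial character factors through $\uu_\pp^{(1)}$, hence has conductor 1. You merely spell out a few steps (the lower bound $f_{\chi_\pp}\geq 1$ and the matching of the Teichm\"uller summand with the first filtration quotient) that the paper leaves implicit.
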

\begin{proof}
	The trivial one has kernel $\uu_\pp = \uu_\pp^{(0)}$ and the nontrivial one factors through $ \uu_\pp^{(1)}$ since $\uu_\pp/ \uu_\pp^{(1)} \cong  \zz/(p^i-1)\zz$ by Theorem \ref{mod}.
\end{proof}

If $\pp$ is above 2, then we have three cases depending whether 2 is split, ramified or inert in $K$. Notice that 2 can't be ramified if $K$ is imaginary quadratic with odd class number unless $K=\qq(i)$ or $\qq(\sqrt{-2})$. 
\begin{theorem}
	The prime  2 is not ramified in the extension $K/\qq$, where $K$ is quadratic imaginary with odd class number unless $K=\qq(i)$ or $\qq(\sqrt{-2})$.
\end{theorem}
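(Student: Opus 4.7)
The plan is to invoke classical genus theory, which computes the $2$-rank of the class group of an imaginary quadratic field directly from the factorization of the discriminant. Specifically, for $K=\qq(\sqrt{-d})$ with $d$ squarefree and positive, the $2$-rank of $Cl_K$ equals $t-1$, where $t$ is the number of distinct rational primes dividing $\mathfrak{d}_K$. Consequently, the class number $h_K$ is odd if and only if $\mathfrak{d}_K$ is (up to sign) a prime power.

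Given this, I would split into cases according to the residue of $d$ modulo $4$, using the standard formula
\begin{equation*}
\mathfrak{d}_K = \begin{cases} -d & \text{if } d \equiv 3 \pmod 4, \\ -4d & \text{if } d \equiv 1, 2 \pmod 4. \end{cases}
\end{equation*}
Since $2$ ramifies in $K/\qq$ exactly when $2 \mid \mathfrak{d}_K$, the case $d \equiv 3 \pmod 4$ already gives $2$ unramified and is harmless. In the case $d \equiv 1 \pmod 4$, the discriminant $-4d$ is supported on $\{2\} \cup \{\text{odd primes dividing } d\}$, so $t=1$ forces $d=1$, giving $K = \qq(i)$. In the case $d \equiv 2 \pmod 4$, we have $d = 2m$ with $m$ odd squarefree, so $\mathfrak{d}_K = -8m$ is supported on $\{2\} \cup \{\text{odd primes dividing } m\}$; again $t=1$ forces $m=1$, giving $K = \qq(\sqrt{-2})$.

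The main content is really the genus-theoretic input, which I would either cite (e.g.\ from Gauss's theory of binary quadratic forms, or from the ambiguous class count formula for the quadratic extension $K/\qq$) or sketch via Kummer theory: index-$2$ subgroups of $Cl_K$ correspond to unramified quadratic extensions of $K$ inside the genus field, and the genus field of $\qq(\sqrt{-d})$ is obtained by adjoining the square roots of the (signed) prime discriminants dividing $\mathfrak{d}_K$, yielding an elementary abelian $2$-extension of degree $2^{t-1}$ over $K$. The only obstacle is making sure that this statement is invoked cleanly; once one has $h_K$ odd $\Longleftrightarrow t=1$, the rest is an elementary inspection of the three residue classes of $d$ modulo $4$.
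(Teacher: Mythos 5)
Your proof is correct, but it takes a genuinely different route from the paper. The paper argues directly and locally: if $2$ ramifies then $(2)=\mathfrak{q}^2$, and either $\mathfrak{q}$ is non-principal, in which case it gives an element of order $2$ in $Cl_K$ (impossible when $h_K$ is odd), or $\mathfrak{q}=(x)$ is principal, in which case $2=ux^2$ for a unit $u$; inspecting the unit groups of imaginary quadratic fields then isolates $\qq(i)$ and $\qq(\sqrt{-2})$ (with $\qq(\sqrt{-3})$ handled separately since $2$ is inert there). This needs nothing beyond the triviality that an odd-order group has no element of order $2$, plus a short computation with units. You instead invoke the full genus-theoretic formula for the $2$-rank of $Cl_K$, namely $t-1$ where $t$ is the number of primes dividing $\mathfrak{d}_K$, and then read off the answer from the three residue classes of $d$ modulo $4$. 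Your input is heavier, but it buys more: the condition $t=1$ immediately yields the sharper classification $K=\qq(\sqrt{-p})$ with $p\equiv 3 \pmod 4$ (or one of the two exceptional fields), which the paper proves as a separate theorem later, citing exactly the same genus-theory fact from Buell that you use. So your argument effectively merges the paper's two theorems into one, at the cost of front-loading a nontrivial citation where the paper's own proof of this particular statement is self-contained and elementary. Both arguments are sound; just make sure, as you note, that the genus-theory statement is cited cleanly for the class group (not merely the narrow class group), which is unproblematic in the imaginary case.
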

\begin{proof}
We will show that the ideal $2\ok_k$ cannot be written as a principal ideal squared.
If the only units in $\ok_K$ are +1,-1, then 2 is associated with a square only if $2=x^2$ or $-2=x^2$ for some $x \in \ok_K$. The first case is not possible since $K$ is imaginary and the second case is possible only in $\qq(\sqrt{-2})$. If there are more units then $K=\qq(i)$ or $K=\qq(\sqrt{-3})$ and for $\qq(\sqrt{-3})$ 2 is inert.
If $(2) = \mathfrak{q}^2$ for some non-principal ideal $ \mathfrak{q}$, then it has order 2 in the class group, which is a contradiction.
\end{proof}
\begin{lemma} \label{localfield}
	If 2 is split, we have  two primes $\pp$ and $ \mathfrak{q}$ above 2 and  $\uu_I \cong \zz_2^\times \cong  \zz/2\zz \times \zz_2$ for $I=\pp, \mathfrak{q}$.
	If 2 is inert, we have $\uu_2 \cong  \zz/3\zz \times \zz/2\zz \times \zz_2^2$.
\end{lemma}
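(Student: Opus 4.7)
The plan is to identify $K_\pp$ with a specific extension of $\qq_2$ via Theorem \ref{extension}, then apply the structure result of Theorem \ref{unit} to read off the group of units, leaving only the determination of the exponent governing the $2$-power torsion.

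First, by Theorem \ref{extension} the ramification index and inertia degree of $K_\pp/\qq_2$ coincide with those of $\pp$ in $K/\qq$. In the split case, both primes above $2$ are unramified with inertia degree $1$, so $K_\pp \cong \qq_2$ for $I = \pp, \mathfrak{q}$, and the claim reduces to the well-known isomorphism $\zz_2^\times \cong \zz/2\zz \times \zz_2$. In the inert case, $K_\pp/\qq_2$ is (by Theorem \ref{unram}) the unique unramified extension of degree $2$, with residue field of order $4$.

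Next I would apply Theorem \ref{unit} with $p = 2$, $q = |\kappa_{K_\pp}|$, and $d = [K_\pp : \qq_2]$. In the split case this gives $\uu_\pp \cong \zz/2^a\zz \times \zz_2$, and in the inert case $\uu_2 \cong \zz/3\zz \times \zz/2^a\zz \times \zz_2^2$. The only remaining task, and the main (modest) obstacle, is to show that the integer $a$ equals $1$ in both cases, since Theorem \ref{unit} only asserts its existence.

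To pin down $a$, observe that $a$ is the largest integer such that $K_\pp$ contains a primitive $2^a$-th root of unity. The extension $\qq_2(\zeta_4) = \qq_2(i)$ is ramified over $\qq_2$, because $1+i$ is a uniformizer there in view of $(1+i)^2 = 2i$. Hence $\zeta_4$ lies in no unramified extension of $\qq_2$, and in particular in neither $\qq_2$ itself nor in the unramified quadratic extension of $\qq_2$. This forces $a = 1$ in both cases, and substituting back yields exactly the isomorphisms stated in the lemma.
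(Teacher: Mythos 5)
Your proof is correct and follows essentially the same route as the paper: identify $K_I$ with $\qq_2$ in the split case and with the unramified quadratic extension $\qq_2[X]/(X^2+X+1)$ in the inert case via Theorems \ref{extension} and \ref{unram}, then apply Theorem \ref{unit} and pin down the exponent $a$. The only point of divergence is the verification that $a=1$: the paper checks directly that $x^2=-1$ has no solution in $\zz/4\zz$ (resp.\ in $\zz/4\zz[X]/(X^2+X+1)$), whereas you argue that $\qq_2(\zeta_4)/\qq_2$ is ramified because $(1+i)^2=2i$ and invoke the standard fact that subextensions of unramified extensions are unramified; both arguments are valid, and yours has the small advantage of handling the split and inert cases uniformly.
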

\begin{proof} 
	We can use Theorem \ref{extension} to see that $K_I$ is isomorphic to $\qq_2$ if $I$ is split and $K_2$ is an unramified degree 2 extension of $\qq_2$ if it is inert.\\
	If 2 is split we can use Theorem \ref{unit} and the fact that $\ok_I$ is isomorphic to $\zz_2$. The exponent $a$ at $\zz/2^a\zz$ is 1 since $-1 \in \zz_2$, but $x^2=-1$ has no solution in $\zz_2$ (it doesn't have a solution in $\zz/4\zz$), so there is a second primitive root of unity in $\zz_2$, but no primitive forth root of unity.\\
	If 2 is inert we use the same theorem, $\ok_2$ is the ring of units of an unramified degree 2 extension of $\zz_2$.
	From Theorem \ref{unram} it can be  written as $\qq_2[X]/(f(X))$ where $f(X)$ is a polynomial such that $\zz/2\zz[X]/(f(X))$ is a degree two field extension of the local field $\zz/2\zz$, i.e. it is irreducible in $\zz/2\zz$. This polynomial is $f(X)=X^2+X+1$.
	So the local field is isomorphic to $\qq_2[X]/(X^2+X+1)$ and $\ok_2 \cong \zz_2[X]/(X^2+X+1)$ (the elements with valuation $\geq 0$). We also have $a=1$ since $x^2=-1$ doesn't have a solution in $\zz/4\zz[X]/(X^2+X+1)$, so there is no primitive forth root of unity.
\end{proof}

\begin{lemma} \label{multi4}
	There is an isomorphism  $(\zz/4\zz[X]/(X^2+X+1))^\times \cong (\zz/3\zz) \times (\zz/2\zz) \times (\zz/2\zz)$
\end{lemma}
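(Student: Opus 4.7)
The plan is to exhibit $R := \zz/4\zz[X]/(X^2+X+1)$ as a local ring and then analyze $R^\times$ through the filtration by the maximal ideal. First I would verify that $X^2+X+1$ is irreducible modulo $2$, so that reduction mod $2$ gives a surjection $R \to \mathbb{F}_2[X]/(X^2+X+1) \cong \mathbb{F}_4$ whose kernel is $\mm := 2R$. Since any maximal ideal of $R$ must contain the characteristic ideal $(2)$ (otherwise the residue field would have characteristic $4$, impossible for a field) and $R/(2)$ is a field, $\mm$ is the unique maximal ideal and $R$ is local with residue field $\mathbb{F}_4$.

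Next I would do the book\-keeping of orders: as a $\zz/4\zz$-module $R \cong (\zz/4\zz)^2$ has $16$ elements, and $\mm$ is spanned over $\zz/2\zz$ by $\{2, 2X\}$, so $|\mm|=4$, $|R^\times|=12$, and $|1+\mm|=4$. The short exact sequence
\begin{equation*}
1 \longrightarrow 1+\mm \longrightarrow R^\times \longrightarrow (R/\mm)^\times \longrightarrow 1
\end{equation*}
then identifies the quotient with $\mathbb{F}_4^\times \cong \zz/3\zz$.

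To pin down $1+\mm$, I would compute $(1+2r)^2 = 1 + 4r + 4r^2 = 1$ in $R$ for any $r \in R$, so $1+\mm$ is a group of order $4$ and exponent $2$, hence isomorphic to $\zz/2\zz \times \zz/2\zz$.

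Finally I would assemble the pieces. Since $R$ is commutative, $R^\times$ is abelian of order $12$, and the two factors $|1+\mm|=4$ and $|(R/\mm)^\times|=3$ are coprime, so $R^\times$ splits as an internal direct product of its $2$-part $1+\mm \cong \zz/2\zz \times \zz/2\zz$ and its $3$-part $\zz/3\zz$, yielding the claimed decomposition. The only step that requires any actual work is the exponent-$2$ calculation for $1+\mm$; everything else is structural, so I do not anticipate a genuine obstacle here.
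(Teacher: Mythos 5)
Your proof is correct, and it takes a more structural route than the paper's. The paper argues directly: it characterizes the invertible elements $AX+B$ (those where $A,B$ are not both zero divisors in $\zz/4\zz$), counts $4\cdot 4-4=12$ of them, notes the group must be $\zz/3\zz\times\zz/2\zz\times\zz/2\zz$ or $\zz/3\zz\times\zz/4\zz$, and then exhibits the three explicit elements $2X+1$, $2X+3$, $3$ of order $2$ to rule out the cyclic $2$-part. You instead recognize $R$ as a finite local ring with maximal ideal $\mm=2R$ and residue field $\mathbb{F}_4$, use the exact sequence $1\to 1+\mm\to R^\times\to\mathbb{F}_4^\times\to 1$, and kill the $2$-part with the single identity $(1+2r)^2=1+4r+4r^2=1$. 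The two arguments hinge on the same two facts ($\abs{R^\times}=12$ and the Sylow $2$-subgroup has exponent $2$), but yours packages them so that the only computation is one line, and it generalizes immediately (e.g.\ it is the same filtration argument the paper itself uses via Theorem \ref{mod} elsewhere, and it would handle any $\zz/4\zz[X]/(f)$ with $f$ irreducible mod $2$). The paper's version is more elementary and self-contained but requires verifying three squares by hand. One cosmetic remark: your parenthetical about why every maximal ideal contains $(2)$ should say that the residue field's characteristic must be a prime dividing $4$, hence equals $2$; as written it only excludes characteristic $4$, though the conclusion is of course right.
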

\begin{proof}
	The invertible elements in $\zz/4\zz[X]/(X^2+X+1)$ are of the form $AX+B$ where both $A,B$ are not zero divisors in $\zz/4\zz$, that is 0 or 2. Therefore there are $4 \cdot 4-4=12$ elements, so the group is isomorphic to $\zz/3\zz \times \zz/2\zz \times \zz/2\zz$ or $\zz/3\zz \times \zz/4\zz$. The elements $2X+1,2X+3$ and $3$ have order 2 and so the group must be isomorphic to $\zz/3\zz \times \zz/2\zz \times \zz/2\zz$.	
\end{proof}

\begin{lemma} \label{multi8}
	There is an isomorphism  $(\zz/8\zz[X]/(X^2+X+1))^\times \cong (\zz/3\zz) \times (\zz/2\zz) \times (\zz/2\zz) \times (\zz/4\zz)$
\end{lemma}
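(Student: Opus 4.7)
The plan is to split $R^\times$ where $R=\zz/8\zz[X]/(X^2+X+1)$ as a product of its $3$-primary and $2$-primary parts, then exhibit explicit generators of the latter. Writing $\omega$ for the image of $X$ in $R$, the factorization $X^3-1=(X-1)(X^2+X+1)$ shows $\omega^3=1$, so $\langle\omega\rangle\cong\zz/3\zz$ lies in $R^\times$. Since $2R$ is a nilpotent ideal and the quotient $R/2R\cong\mathbb{F}_4$ is a field, $R$ is local with maximal ideal $2R$, so an element $A+B\omega$ (with $A,B\in\zz/8\zz$) is a unit iff not both $A$ and $B$ are even. Hence $|R^\times|=64-16=48$, and the reduction $R^\times\to\mathbb{F}_4^\times\cong\zz/3\zz$ is split by $\omega$, giving $R^\times\cong\zz/3\zz\times U^{(1)}$ with $U^{(1)}=1+2R$ of order $16$.

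To identify $U^{(1)}\cong\zz/2\zz\times\zz/2\zz\times\zz/4\zz$ I would take the three candidates $u_1=3$, $u_2=1+4\omega$ and $u_3=1+2\omega$ and compute their orders using $\omega^2=-\omega-1$. A short calculation gives $u_1^2=9\equiv 1$, $u_2^2=1+8\omega+16\omega^2\equiv 1$ and $u_3^2=1+4\omega+4\omega^2=1-4\equiv 5\pmod 8$, so $u_1$ and $u_2$ have order $2$ while $u_3$ has order $4$ with $u_3^2=5$. It then suffices to show that the $16$ products $u_1^i u_2^j u_3^k$ for $i,j\in\{0,1\}$ and $k\in\{0,1,2,3\}$ are pairwise distinct, which a direct expansion verifies by tracking the coefficients of $1$ and $\omega$ modulo $8$. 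Since $|U^{(1)}|=16$, the resulting map $\zz/2\zz\times\zz/2\zz\times\zz/4\zz\to U^{(1)}$ is a bijective homomorphism, hence an isomorphism.

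The obstacle is bookkeeping rather than conceptual: one must check the distinctness of the $16$ products above. A more conceptual alternative avoiding this case-check is to use the filtration $U^{(1)}\supset U^{(2)}=1+4R\supset U^{(3)}=\{1\}$, whose successive quotients are both $\mathbb{F}_4$ as additive groups, together with the identity $(1+2\alpha)^2\equiv 1+4(\alpha+\alpha^2)\pmod 8$; the induced squaring $U^{(1)}/U^{(2)}\to U^{(2)}$ factors through the Artin-Schreier map $\alpha\mapsto\alpha+\alpha^2$ on $\mathbb{F}_4$, whose image is the one-dimensional subspace $\mathbb{F}_2\subset\mathbb{F}_4$. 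This records exactly which elements of $U^{(1)}$ have order $4$ versus order $2$, and together with the elementary abelian structure of $U^{(2)}$ pins the group down as $\zz/2\zz\times\zz/2\zz\times\zz/4\zz$.
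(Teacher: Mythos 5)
Your proof is correct, and its main line is essentially the paper's: both count the $48$ units, split off the $\zz/3\zz$ generated by the image of $X$, and then pin down the $2$-part by exhibiting an element of order $4$ and enough elements of order $2$. The organizational difference is that the paper peels off $\langle -1\rangle$ separately and works inside a set of order $8$ of representatives $2AX+4C+1$, whereas you work with the principal unit group $1+2R$ of order $16$, split off by the reduction to $\mathbb{F}_4^\times$; your version is slightly cleaner, since the local-ring observation gives the unit count and the splitting in one stroke, and $1+2R$ is manifestly a subgroup. Your computations check out: $3$ and $1+4\omega$ square to $1$, $(1+2\omega)^2=1+4(\omega+\omega^2)=5$ so $1+2\omega$ has order $4$, and the deferred distinctness check reduces to noting that $\{1,3,1+4\omega,3+4\omega\}$ and $\{1,1+2\omega,5,5+2\omega\}$ meet only in $1$, so the $16$ products fill out $1+2R$. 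The genuinely different ingredient is your alternative via the filtration $1+2R\supset 1+4R\supset\{1\}$ and the Artin--Schreier map $\alpha\mapsto\alpha+\alpha^2$ on $\mathbb{F}_4$: it determines the image of squaring (hence the number of elements of order at most $2$) with no enumeration at all, identifies the group among the abelian groups of order $16$ and exponent $4$, and is the argument that would scale painlessly to the ramified local factors computed later in the paper, where the paper only asserts the answer "by analyzing the structure of these groups."
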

\begin{proof}
	The invertible elements in $\zz/8\zz[X]/(X^2+X+1)$ are of the form $AX+B$ where both $A,B$ are not zero divisors in $\zz/8\zz$, that is 0,2,4 or 6. Therefore there are $8 \cdot 8-4 \cdot 4=48$ elements. Every element can be multiplied by one of $1,X,X^2$ so that it is of the form $2AX+B$. It can then be multiplied by one of $+1,-1$ so that $B$ is $1+4C$. These elements $2AX+4C+1$ form a subgroup with $4 \cdot 2=8$ elements and it has an element $2X+1$ of order 4 and 3 elements $5,4X+1,4X+5$ of order 2, so it is isomorphic to $\zz/2\zz \times \zz/4\zz$. 	
\end{proof}

It is now possible to determine all characters for even primes.
\begin{lemma} \label{conductor}
	For split primes over 2 we have 3 nontrivial characters with local conductors 2,3 and 3.
	If 2 is inert we have 7 nontrivial characters, 3 of them with local conductor 3 and 4 with local conductor 3.
\end{lemma}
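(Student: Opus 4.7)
The statement is really two computations, one for each splitting type of $2$ in $K$, and in both cases the strategy is the same: use Theorem \ref{mod} to identify the finite quotients $\uu_\pp/\uu_\pp^{(n)}$ for $n=1,2,3$, count the homomorphisms from these quotients to $\zz/2\zz$, and assign each nontrivial character the smallest $n$ such that it factors through $\uu_\pp/\uu_\pp^{(n)}$; by the definition of the local Artin conductor this smallest $n$ is precisely $f_{\chi_\pp}$.

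For the split case, Lemma \ref{localfield} gives $\uu_\pp\cong\zz_2^\times$, so $\uu_\pp/\uu_\pp^{(n)}\cong(\zz/2^n\zz)^\times$. I would compute these directly: $(\zz/2\zz)^\times$ is trivial (no character of conductor $1$), $(\zz/4\zz)^\times\cong\zz/2\zz$ (one nontrivial character, of conductor $2$), and $(\zz/8\zz)^\times\cong\zz/2\zz\times\zz/2\zz$ (three nontrivial characters to $\zz/2\zz$). One of those three is already captured at level $2$, so the remaining two have conductor exactly $3$, giving conductors $2,3,3$ as claimed. Alternatively one can note that $\zz_2^\times/(\zz_2^\times)^2$ has order $4$, so there are exactly $3$ nontrivial quadratic characters, and then separate them by the filtration.

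For the inert case, $\uu_\pp\cong\zz/3\zz\times\zz/2\zz\times\zz_2^2$ by Lemma \ref{localfield}, and the first factor contributes no homomorphism to $\zz/2\zz$, so there are $2^3-1=7$ nontrivial characters in total. To sort them by conductor I would use the isomorphisms $\uu_\pp/\uu_\pp^{(1)}\cong\kappa_\pp^\times\cong\zz/3\zz$ (so no character has conductor $1$), $\uu_\pp/\uu_\pp^{(2)}\cong\zz/3\zz\times\zz/2\zz\times\zz/2\zz$ from Lemma \ref{multi4} (yielding exactly $3$ nontrivial characters to $\zz/2\zz$, all of conductor $2$), and $\uu_\pp/\uu_\pp^{(3)}\cong\zz/3\zz\times\zz/2\zz\times\zz/2\zz\times\zz/4\zz$ from Lemma \ref{multi8} (yielding $2^3-1=7$ nontrivial characters in total). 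Subtracting, $7-3=4$ characters have conductor exactly $3$. (I read the final sentence of the lemma as having a typographical slip; the numbers I obtain are $3$ of conductor $2$ and $4$ of conductor $3$.)

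The one mildly subtle point, and the only real obstacle I foresee, is the inert $(\zz/8\zz)^\times$-analogue: one has to verify that the $\zz/4\zz$-factor in Lemma \ref{multi8} really does survive as a new degree of freedom at level $3$ and is not merely a lift of the existing $\zz/2\zz$ characters at level $2$. This I would check by exhibiting a representative of $\uu_\pp^{(2)}/\uu_\pp^{(3)}$ whose image in the $\zz/4\zz$-factor has order $2$ (rather than being trivial), which by Theorem \ref{mod} amounts to tracking a generator of $\mm^2/\mm^3$ through the identification in Lemma \ref{multi8}; everything else is bookkeeping with finite abelian groups.
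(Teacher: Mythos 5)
Your proposal is correct and follows essentially the same route as the paper: both count nontrivial characters via the total structure of $\uu_\pp$ from Lemma \ref{localfield} and then sort them by conductor using the quotients $\uu_\pp/\uu_\pp^{(n)}\cong(\zz/2^n\zz)^\times$ in the split case and Lemmas \ref{multi4} and \ref{multi8} in the inert case. Your reading of the final sentence as a typographical slip ($3$ characters of conductor $2$, not $3$) matches what the paper's own proof actually establishes, and the ``subtle point'' you raise is handled automatically since restriction of characters along the surjection $\uu_\pp/\uu_\pp^{(3)}\to\uu_\pp/\uu_\pp^{(2)}$ is injective on character groups, so the subtraction is valid.
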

\begin{proof}
	There are two homomorphism $\zz_2 \to \zz/2\zz$. One is trivial and the other one is nontrivial with kernel $2\zz_2$ (because $2\zz_2$ is always in the kernel and so the homomorphism is determined by the image of 1).\\
	For split primes we have $\uu_I \cong \zz/2\zz \times \zz_2 $  using Lemma \ref{localfield} so there are $2 \times 2=4$ characters of which 3 are nontrivial (with kernels $(0,2\zz_2),\\(\zz/2\zz,0),(\zz/2\zz,2\zz_2)$). Also $\uu_I/\uu_I^{(2)} \cong (\zz_2/4\zz_2)^\times \cong (\zz/4\zz)^\times \cong \zz/2\zz$ using Theorem \ref{mod}, so one of them has conductor 2 (a character factorizes through $\uu/\uu^{(n)}$ iff it has local conductor $\leq n$). We also have $\uu_I/\uu_I^{(3)} \cong (\zz/8\zz)^\times \cong \zz/2\zz \times \zz/2\zz$ and so there are 3 nontrival characters with local conductors $\leq 3$. One of them has local conductor 2, so the other two have local conductor 3.\\
	From the proof of Lemma \ref{localfield}, for inert prime 2 the local field is  isomorphic to \\ $\ok_2 \cong \zz_2[X]/(X^2+X+1)$. The group of units is $\uu_2 \cong (\zz_2[X]/(X^2+X+1))^\times \cong  \zz/3\zz \times \zz/2\zz \times \zz_2^2$ using Theorem \ref{unit}. 
	So there are 7 nontrivial characters total. Using Theorem \ref{mod} and \ref{multi4}, we can see that $\uu_2/\uu_2^{(2)} \cong (\zz/4\zz[X]/(X^2+X+1))^\times \cong \zz/3\zz \times \zz/2\zz \times \zz/2\zz$ so 3 of them have conductor 2 and $\uu_2/\uu_2^{(3)} \cong (\zz/8\zz[X]/(X^2+X+1))^\times \cong \zz/3\zz \times \zz/2\zz \times \zz/4\zz \times \zz/2\zz$ using Lemma \ref{multi8}, so all other characters factorize through $\uu_2^{(3)}$ and so have conductor 3.
\end{proof}

\section{Counting function}
We put all information about quadratic extensions of a number field $K$ into one function. We still assume that $K$ is imaginary quadratic with odd class number.\\ In this section we will assume that $K \neq \qq(i),\qq(\sqrt{-2}),\qq(\sqrt{-3})$.
\begin{definition}
	The counting function $f_K(s)$ of a number field $K$ is a function of a complex variable $s$ defined as the series
	\begin{equation*}
	f_K(s) = \sum_{n=0}^\infty a_n n^{-s}
	\end{equation*}
	where $a_n$ is the number of quadratic field extensions of $K$ with absolute discriminant $n$.
\end{definition}
We have already seen in Theorem \ref{correspond} that quadratic extensions correspond to nontrivial continuous homomorphisms $\ii^\infty_KK^\times/K^\times \cong  \ii^\infty_K/(\ii^\infty_K\cap K^\times)  \to \zz/2\zz$. If  $K \neq \qq(i),\qq(\sqrt{-3})$, we have $\ok_K^\times = \lbrace +1,-1 \rbrace$. Let's first look at homomorphisms $\ii^\infty_K \to \zz/2\zz$.
\\
In our case of an imaginary quadratic field we have $\ii^\infty_K = \cc^\times \times \prod_{\pp \text{ finite}} \uu_\pp$. We know from Theorem \ref{galois} that each character is a sum of a finite number of homomorphisms $\uu_\pp \to \zz/2\zz$.\\
We will see that the counting function can be written as a product of local factor over the primes. For convenience we will define the \emph{absolute conductor} of a character as the norm of the conductor of the character, which is natural number instead of an ideal.
\begin{definition}
	We define the local factor $g_\pp(s)$ at a prime $\pp$ to be the function
	\begin{equation*}
	g_\pp(s)= \sum_{\chi_i} N(\pp)^{-f_is}
	\end{equation*}
	where the sum is over all characters $\chi_i$ on $ \uu_\pp$ and $f_i$ is the local conductor of $\chi_i$
\end{definition}
From Lemma \ref{cond} we get that:
\begin{theorem} \label{gnot2}
	For primes $\pp$ not above 2 the local factor is
	\begin{equation*}
	g_\pp(s) =(1+N(\pp)^{-s})
	\end{equation*}
\end{theorem}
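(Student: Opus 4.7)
The plan is to unwind the definition of $g_\pp(s)$ and substitute the enumeration of characters on $\uu_\pp$ we have already established.

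First I would recall that, by definition, $g_\pp(s)=\sum_{\chi_i} N(\pp)^{-f_i s}$ where the sum runs over all characters $\chi_i : \uu_\pp \to \zz/2\zz$ and $f_i$ is the local conductor of $\chi_i$. So it suffices to list the characters together with their conductors.

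Next I would invoke the lemma stating that for a prime $\pp$ not above $2$ there are exactly two characters $\uu_\pp \to \zz/2\zz$, namely the trivial one and a single nontrivial one (coming from the unique index-$2$ subgroup of $\zz/(p^i-1)\zz$ in the decomposition $\uu_\pp \cong \zz/(p^i-1)\zz \times \zz/p^a\zz \times \zz_p^d$ from Lemma \ref{even}). Then Lemma \ref{cond} tells us that the trivial character has conductor $0$ and the nontrivial one has conductor $1$.

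Finally I would simply substitute into the defining sum: the trivial character contributes $N(\pp)^{0}=1$ and the nontrivial character contributes $N(\pp)^{-s}$, yielding
\begin{equation*}
g_\pp(s) = 1 + N(\pp)^{-s},
\end{equation*}
as claimed. There is no genuine obstacle here; the statement is a direct bookkeeping consequence of the two lemmas cited, so the only care needed is to make sure both characters (including the trivial one) are counted and assigned the correct conductor.
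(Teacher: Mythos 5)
Your proof is correct and matches the paper's own reasoning: the theorem is stated as an immediate consequence of Lemma \ref{cond} (together with the lemma that $\uu_\pp$ has exactly one nontrivial character for $\pp$ not above $2$), and you substitute the conductors $0$ and $1$ into the defining sum exactly as intended. Nothing is missing.
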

Similarly from Lemma \ref{conductor} we get
\begin{theorem} \label{local}
	If 2 is inert, then the local factor $g_2(s)$ is:
	\begin{equation*}
	g_2(s) = (1+3N(2)^{-2s}+4N(2)^{-3s})
	\end{equation*}
	and if 2 is split, then for the primes $I$ above 2 we have
	\begin{equation*}
	g_I(s)=(1+N(I)^{-2s}+2N(I)^{-3s}).
	\end{equation*}
\end{theorem}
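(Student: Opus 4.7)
The plan is to read the formulae off directly from the definition of $g_\pp(s)$ together with the character count supplied by Lemma \ref{conductor}. Since the sum $g_\pp(s)=\sum_{\chi_i}N(\pp)^{-f_is}$ runs over every character of $\uu_\pp$, the trivial character contributes the constant term $1$ (its local conductor being $0$), and the remaining summands are grouped by their local conductor value. For the inert case, Theorem \ref{extension} gives that $K_2$ is the unramified quadratic extension of $\qq_2$, so its residue field has four elements and $N(2)=4$; Lemma \ref{conductor} then supplies exactly $3$ nontrivial characters of local conductor $2$ and $4$ of local conductor $3$, so summing yields
\begin{equation*}
g_2(s)=1+3N(2)^{-2s}+4N(2)^{-3s}.
\end{equation*}

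For the split case, each prime $I$ above $2$ satisfies $K_I\cong\qq_2$ by Theorem \ref{extension}, hence $N(I)=2$. Lemma \ref{conductor} gives one nontrivial character of local conductor $2$ and two of local conductor $3$, producing
\begin{equation*}
g_I(s)=1+N(I)^{-2s}+2N(I)^{-3s}.
\end{equation*}

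I do not anticipate any real obstacle here; the theorem is essentially a bookkeeping consequence of Lemma \ref{conductor}. The only points requiring attention are remembering that the trivial character is included in the sum (so the constant term $1$ appears even though Lemma \ref{conductor} enumerates only the nontrivial characters) and keeping track of the norms: the inert prime has residue field of size $4$, while each split prime $I$ has residue field of size $2$. All the genuine work has already been absorbed into the structural analysis of $\uu_\pp/\uu_\pp^{(n)}$ carried out in Lemmas \ref{localfield}, \ref{multi4}, and \ref{multi8}.
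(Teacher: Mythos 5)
Your proposal is correct and matches the paper exactly: the paper gives no separate proof of this theorem, presenting it as an immediate bookkeeping consequence of Lemma \ref{conductor} together with the definition of $g_\pp(s)$, which is precisely what you do. Your added care about including the trivial character's contribution of $1$ and about the residue field sizes ($N(2)=4$ inert, $N(I)=2$ split) is sound, and you correctly read the inert count as $3$ characters of conductor $2$ and $4$ of conductor $3$ despite the typo in the statement of Lemma \ref{conductor}.
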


First we show that a simplified counting function can be written as a product of local factors.
\begin{theorem} \label{ f0formula}
	The counting function $f_0(s) = \sum_{n \geq 1} a_n n^{-s}$ where $a_n$ is the number of continuous characters $\ii^\infty_K \to \zz/2\zz$ with absolute conductor $n$ can be written as
	\begin{equation*}
	f_0(s) =\prod_{\pp \text{ primes of }K}g_\pp(s) =\prod_{\pp |2} g(s)_\pp \times \prod_{\pp \neg |2} (1+N(\pp)^{-s}).
	\end{equation*}
	Furthermore it converges for $\mathrm{Re}(s)>1$.
\end{theorem}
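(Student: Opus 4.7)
The plan is to interpret the claimed identity as an Euler product expansion and verify convergence via comparison with $\zeta_K$. By Theorem \ref{galois}, continuous characters $\chi:\ii_K^\infty \to \zz/2\zz$ are in bijection with tuples $(\chi_\pp)_\pp$ where each $\chi_\pp$ is a character of $\uu_\pp$ and all but finitely many are trivial; the trivial character of $\ii_K^\infty$ corresponds to the tuple in which every $\chi_\pp$ is trivial. Under this bijection, Theorem \ref{sums} gives $\mathfrak{f}_\chi = \prod_\pp \pp^{f_{\chi_\pp}}$, so the absolute conductor is the multiplicative quantity $N(\mathfrak{f}_\chi) = \prod_\pp N(\pp)^{f_{\chi_\pp}}$.

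Given this multiplicative structure, I would expand the formal product
\begin{equation*}
\prod_\pp g_\pp(s) \;=\; \prod_\pp \Bigl(\sum_{\chi_\pp \text{ on } \uu_\pp} N(\pp)^{-f_{\chi_\pp}\,s}\Bigr).
\end{equation*}
Since the trivial character on $\uu_\pp$ has local conductor $0$ and contributes $1$, each term in the expansion picks one character $\chi_\pp$ per prime, with almost all of them trivial. The product of the contributions is $\prod_\pp N(\pp)^{-f_{\chi_\pp} s} = N(\mathfrak{f}_\chi)^{-s}$. Collecting terms by the value $n = N(\mathfrak{f}_\chi)$ yields $\sum_n a_n n^{-s} = f_0(s)$, which is exactly the claim.

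For convergence on $\mathrm{Re}(s) > 1$, I would argue as follows. For odd primes, Theorem \ref{gnot2} gives $g_\pp(s) = 1 + N(\pp)^{-s}$; for the finitely many primes $\pp \mid 2$, the local factor $g_\pp(s)$ from Theorem \ref{local} is a finite polynomial in $N(\pp)^{-s}$ whose non-constant terms have exponent at least $2s$. Hence $\log g_\pp(s) = N(\pp)^{-s} + O(N(\pp)^{-2\mathrm{Re}(s)})$ uniformly in $\pp$. Thus absolute convergence of the infinite product reduces to convergence of $\sum_\pp N(\pp)^{-s}$. The latter follows from the Euler product $\zeta_K(s) = \prod_\pp (1-N(\pp)^{-s})^{-1}$, which gives $\log \zeta_K(s) = \sum_\pp N(\pp)^{-s} + O(1)$ on $\mathrm{Re}(s) > 1$, where $\zeta_K$ is known to converge.

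The main bookkeeping obstacle is justifying that the formal expansion of the infinite product over all primes equals the Dirichlet series term by term. This is legitimate precisely because the product converges absolutely in the region under consideration and the coefficients are non-negative, so one may rearrange freely. Once this is observed, together with the bijection from Theorem \ref{galois} and the multiplicativity of conductors from Theorem \ref{sums}, the identity falls out immediately.
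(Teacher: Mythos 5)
Your proposal is correct and follows essentially the same route as the paper: expand the Euler product and match each term with a character via the unique decomposition into local characters and the multiplicativity of the conductor (Theorem \ref{sums}). The only difference is cosmetic --- you justify the rearrangement by absolute convergence of the infinite product (via $\log g_\pp(s)$ and comparison with $\log \zeta_K(s)$), whereas the paper truncates to the primes of norm less than $k$, uses the bound $a_I \leq 4$ together with convergence of $\zeta_K(s)$, and lets $k \to \infty$; both are standard and valid.
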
 
\begin{proof}
	Notice that the function $f_0(s)$ can also be written as a sum over ideals of $\ok_K$, that is $f_0(s)= \sum_{I \text{ ideal of }\ok_K} a_I N(I)^{-s}$, where $a_I$ is the number of homomorphisms with conductor $I$. This is the same sum, we are just indexing the terms by the conductors (ideals) instead of their norms (natural numbers).\\
	Using the theorems in Section \ref{section}, we know that every nontrivial homomorphism $\chi : \ii^\infty_K \to \zz/2\zz$ can be uniquely written as a sum of finitely many homomorphisms $\chi_\pp : \uu_\pp \to \zz/2\zz$ over distinct primes. Denote $D_k$ the set of primes of $\ok_K$ with norm less than $k$ and $H_k$ the set of ideals, that can be written as products of prime ideals from $D_k$. The set $D_k$ is finite, beacuse the are only finitely many ideals with norm less then some number (\cite{ant} Theorem 4.4). Every character $\chi$ whose conductor is in $H_k$ can be written as a sum of local characters over primes in $D_k$. This is because the absolute conductor of a sum of local homomorphisms over distinct primes $\sum_\pp \chi_\pp$ is $\prod_\pp N(\pp)^{f_\pp}$, by Theorem \ref{sums}.\\
	We have $\sum_{I \in H_k} a_I N(I)^{-s} = \prod_{\pp \in D_k} g_\pp(s)$ as we will show. The local factors $g_\pp(s)$ are sums of the terms $N(\pp)^{-f_\pp s}$ for all local characters $\chi_\pp$ on $\uu_\pp$ to the power $-s$ (here $f_\pp$ is the local conductor of $\chi_\pp$). If we  multiply out all the local factors for primes in $D_k$, we get exactly the sum of absolute conductors of all sums of local characters of primes in $D_k$ to the power $-s$. Therefore the product is $\sum_{\chi} N(I_\chi)^{-s}= \sum_{I \in H_k} a_I N(I)^{-s}$ where the first sum is over characters with conductor in $H_k$l and $I_\chi$ is the conductor of $\chi$.\\

	From this and the form of the factors $g_\pp(s)$, we can see that $a_I$ is at most 4, so the sum $f_0(s)= \sum_{I \text{ ideal of }\ok_K} a_I N(I)^{-s}$ converges for $\mathrm{Re}(s)>1$. We get the inequality
	\begin{equation*}
	|f_0(s)-  \prod_{\pp \in D_k} g_\pp(s)| \leq \sum_{I \notin H_k} a_I N(I)^{-\mathrm{Re}(s)}.
	\end{equation*}
	Thus on the halfplane $\mathrm{Re}(s)>1$ the product converges to $f_0(s)$ by letting $k$ go to infinity.
	%The absolute conductor of $\chi$ is the product $\prod_{\pp} N(\pp)^{f_\pp}$ where $f_\pp$ is the local conductor of $\chi_\pp$. Since the local factors of $f_0(s)$ are sums of absolute conductors of all local characters to the power $-s$, if we multiply out the product in $f_0(s)$, we get all absolute conductors of all possible characters.\\
	%The first factor is finite and thus converges. The second factor converges iff the sum $\sum_{\pp \text{ prime}} N(\pp)^{-s}$ converges and $|\sum_{\pp \text{ prime}} N(\pp)^{-s}| \leq 2\sum_{n \geq 1} n^{-\mathrm{Re}(s)}$ on the halfplane which converges.
\end{proof}

The counting function looks like an Euler product for the Dedekind zeta function.

In fact, we have
\begin{theorem} \label{real}	
The function $f_0(s)$ can be expressed as
\begin{equation*}
f_0(s) =  \frac{\zeta_K(s)}{\zeta_K(2s)} \times \prod_{\pp |2} \frac{g(s)_\pp}{(1+N(\pp)^{-s})}
\end{equation*}
and therefore can be analytically extended to a holomorphic function for $\mathrm{Re}(s) > 1/2, s\neq 1$ with a simple pole at 1.
\end{theorem}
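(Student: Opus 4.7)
The plan is to start from the infinite product in Theorem \ref{f0formula} and recognize the part over primes $\pp\nmid 2$ as an Euler-type product expressing a ratio of Dedekind zeta functions. First I would rewrite
\begin{equation*}
f_0(s) = \prod_{\pp|2} g_\pp(s) \cdot \prod_{\pp\nmid 2}\bigl(1+N(\pp)^{-s}\bigr)
       = \Bigl(\prod_{\pp|2}\frac{g_\pp(s)}{1+N(\pp)^{-s}}\Bigr) \prod_{\pp}\bigl(1+N(\pp)^{-s}\bigr),
\end{equation*}
where the second product is now over all primes of $K$. This holds as an identity of convergent products for $\mathrm{Re}(s)>1$, since the $\pp|2$ factors inserted and removed are a finite number of nonzero holomorphic terms in that half-plane (one checks $1+N(\pp)^{-s}\ne 0$ for $\mathrm{Re}(s)>0$ because $|N(\pp)^{-s}|<1$ there).

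Next I would invoke the algebraic identity $1+x=(1-x^2)/(1-x)$ with $x=N(\pp)^{-s}$ to get
\begin{equation*}
\prod_{\pp}\bigl(1+N(\pp)^{-s}\bigr) = \prod_{\pp}\frac{1-N(\pp)^{-2s}}{1-N(\pp)^{-s}} = \frac{\zeta_K(s)}{\zeta_K(2s)},
\end{equation*}
where I use the Euler product expansions $\zeta_K(s)=\prod_\pp (1-N(\pp)^{-s})^{-1}$ and $\zeta_K(2s)=\prod_\pp (1-N(\pp)^{-2s})^{-1}$, both of which converge absolutely for $\mathrm{Re}(s)>1$. Combining the two displays gives the desired product formula.

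For the analytic continuation, I would use the standard meromorphic continuation of $\zeta_K$, which is holomorphic on $\mathbb{C}\setminus\{1\}$ with a simple pole at $s=1$. The factor $\zeta_K(2s)$ in the denominator is holomorphic and nonzero in the region $\mathrm{Re}(s)>1/2$ (since there $\mathrm{Re}(2s)>1$, where the Euler product of $\zeta_K$ converges absolutely and therefore does not vanish), so $1/\zeta_K(2s)$ is holomorphic on that half-plane. The finitely many local factors $g_\pp(s)/(1+N(\pp)^{-s})$ for $\pp|2$ are ratios of trigonometric polynomials in $N(\pp)^{-s}$; the numerator $g_\pp(s)$ is entire and the denominator $1+N(\pp)^{-s}$ is nonzero on $\mathrm{Re}(s)>0$, so each such ratio is holomorphic on $\mathrm{Re}(s)>0$. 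Putting these three observations together, $f_0(s)$ extends to a function that is holomorphic on $\mathrm{Re}(s)>1/2$ except for the simple pole at $s=1$ inherited from $\zeta_K(s)$.

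The only real obstacle is a bookkeeping one, namely being careful that rearrangement of the product and the passage to the Euler product identity is justified in the half-plane of absolute convergence; once this is done, the analytic continuation is a direct consequence of the corresponding property of $\zeta_K$ and the nonvanishing of $\zeta_K(2s)$ on $\mathrm{Re}(s)>1/2$.
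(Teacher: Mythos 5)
Your proposal is correct and follows essentially the same route as the paper: pull the finitely many $\pp\mid 2$ factors out as $g_\pp(s)/(1+N(\pp)^{-s})$, complete the remaining product over all primes, apply $1+x=(1-x^2)/(1-x)$ to recognize $\zeta_K(s)/\zeta_K(2s)$, and deduce the continuation from the nonvanishing of $\zeta_K(2s)$ on $\mathrm{Re}(s)>1/2$. Your extra remarks on absolute convergence and the nonvanishing of the denominators $1+N(\pp)^{-s}$ only make the argument more careful than the paper's.
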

\begin{proof}
	We can write
	\begin{align*}
	f_0(s) = & \prod_{\pp |2} g(s)_\pp \times \prod_{\pp \neg |2} (1+N(\pp)^{-s})=\\
	= &  \prod_{\pp |2} \frac{g(s)_\pp}{(1+N(\pp)^{-s}} \times \prod_{\pp \text{ all primes}} (1+N(\pp)^{-s})=\\
	= &  \prod_{\pp |2} \frac{g(s)_\pp}{(1+N(\pp)^{-s}} \times \prod_{\pp \text{ all primes}} \frac{(1-N(\pp)^{-2s})}{(1-N(\pp)^{-s})}=\\
	=& \prod_{\pp |2} \frac{g(s)_\pp}{(1+N(\pp)^{-s}} \times \frac{\zeta_K(s)}{\zeta_K(2s)}.
	\end{align*}
	The function $\zeta_K$ is holomorphic outside 1 with a simple pole at 1. It also has no zeros for $\mathrm{Re}(s) >1$ so the function $\frac{1}{\zeta_K(2s)}$ is holomorphic on the halfplane $\mathrm{Re}(s) >1/2$. There are only 1 or 2 primes above 2, so the first factor is holomorphic on $\cc$.
\end{proof}
From Theorem \ref{galois} we know that quadratic extensions of $K$ correspond to homomorphisms from $\ii_K^\infty$ to $\zz/2\zz$ with $\ii^\infty_K \cap K^\times$ in the kernel. So far we have only looked at homomorphisms $\ii_K^\infty \to \zz/2\zz$, so we need to figure out which ones send $\ii^\infty_K \cap K^\times$ to $0 \in \zz/2\zz$. As we saw just before Lemma \ref{technic}, we have $\ii^\infty_K \cap K^\times=\lbrace +1,-1 \rbrace$. So the characters need to send $-1$ to 0. We will call a character \emph{even} is it does send $-1$ to 0 and \emph{odd} otherwise.

\begin{definition}
	We define the odd local factor $g_-(s)_\pp$ at a prime $\pp$ to be the function
	\begin{equation*}
	g_\pp(s)= \sum_{\chi_i} \sigma_i N(\pp)^{-f_is}
	\end{equation*}
	where the sum is over all characters $\chi_i$ on $ \uu_\pp$ and $f_i$ is the local conductor of $\chi_i$ and $\sigma_i$ is 1 if the character is even and $-1$ if it is odd.
\end{definition}
From Lemma \ref{cond} we get that:
\begin{theorem}
	For primes $\pp$ not above $2$ the odd local factor is
	\begin{equation*}
	g_-(s)_\pp =(1+N(\pp)^{-s})
	\end{equation*}
	if $\pp$ is inert or the prime $p$ (where $\pp$ is above $p$) is 1 mod 4 and
		\begin{equation*}
	g_-(s)_\pp =(1-N(\pp)^{-s})
	\end{equation*}
	if it is not inert and  $p$ is $3$ $(mod$ $4)$.
\end{theorem}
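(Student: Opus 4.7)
The plan is to pin down the unique nontrivial character on $\uu_\pp$ explicitly and then evaluate it at $-1$, since the sign $\sigma$ is what distinguishes $g_-(s)_\pp$ from $g_\pp(s)$.

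First, I would invoke Lemma \ref{even} to write $\uu_\pp \cong \zz/(p^i-1)\zz \times \zz/p^a\zz \times \zz_p^d$, where $i$ is the inertia degree of $\pp$ over $p$. As observed in the proofs preceding Lemma \ref{cond}, the factors $\zz/p^a\zz$ and $\zz_p^d$ admit no nontrivial homomorphism to $\zz/2\zz$, so the unique nontrivial character $\chi_\pp : \uu_\pp \to \zz/2\zz$ factors through $\uu_\pp/\uu_\pp^{(1)} \cong \zz/(p^i-1)\zz$ (which is why its local conductor equals $1$). Being the unique nontrivial character out of an even cyclic group, $\chi_\pp$ is reduction modulo $2$.

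Second, I would locate $-1 \in \uu_\pp$ inside the cyclic summand. Through the isomorphism $\uu_\pp/\uu_\pp^{(1)} \cong \kappa_\pp^\times$ of Theorem \ref{mod}, the element $-1$ reduces to the unique element of order $2$ in the cyclic group $\kappa_\pp^\times$ of order $p^i-1$, which under the chosen isomorphism with $\zz/(p^i-1)\zz$ is the class of $(p^i-1)/2$. Hence
\begin{equation*}
\chi_\pp(-1) \equiv \tfrac{p^i-1}{2} \pmod{2},
\end{equation*}
so $\chi_\pp$ is even exactly when $p^i \equiv 1 \pmod 4$.

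Third, I would split into cases. If $\pp$ is inert then $i=2$, and for every odd $p$ one has $p^2 \equiv 1 \pmod 4$, so $\chi_\pp$ is even and $\sigma = +1$. If $\pp$ is not inert then $i=1$, and $\chi_\pp$ is even iff $p \equiv 1 \pmod 4$ and odd iff $p \equiv 3 \pmod 4$. Combining this with the trivial character, which is even with conductor $0$ and always contributes $+1$, gives
\begin{equation*}
g_-(s)_\pp = 1 + \sigma\, N(\pp)^{-s}
\end{equation*}
with $\sigma = +1$ in the inert or $p \equiv 1 \pmod 4$ case, and $\sigma = -1$ in the non-inert, $p \equiv 3 \pmod 4$ case, as claimed.

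The only delicate step is the identification of $-1$ with $(p^i-1)/2$ inside the cyclic factor: this uses that the torsion subgroup of $\uu_\pp$ of order prime to $p$ maps isomorphically onto $\kappa_\pp^\times$ (Teichmüller lift), a fact implicit in Theorems \ref{mod} and \ref{unit}. Once that identification is made, the rest is an arithmetic check mod $4$.
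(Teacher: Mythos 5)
Your proposal is correct and follows essentially the same route as the paper's proof: decompose $\uu_\pp$ via Lemma \ref{even}, identify $-1$ with the class of $(p^i-1)/2$ in the cyclic factor $\zz/(p^i-1)\zz$, and check whether $4 \mid (p^i-1)$ in the inert ($i=2$) versus non-inert ($i=1$) cases. The extra justification you give for locating $-1$ (as the unique element of order $2$, lifted via Teichm\"uller) is a detail the paper leaves implicit, but the argument is the same.
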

\begin{proof}
	Same as for the ordinary local factor, but now we have to look where is $-1$ sent by the nontrivial character on $\uu_\pp$. From \ref{even} we get that $\uu_K \cong  \zz/(p^i-1)\zz \times \zz/p^a\zz \times \zz_p^d$. The element $-1$ is mapped to  $(\frac{(p^i-1)}{2},0,0)$ by the isomorphism.
	If $\pp$ is inert, then $i=2$ and $4|(p^i-1)$, so $-1$ is mapped to 0 by the nontrivial character. This is also the case if $4|(p-1)$. Otherwise $-1$ is sent to $1 \in \zz/2\zz$ and the nontrivial character on $\uu_\pp$ is odd.
\end{proof}
We will write $C$ for the set of primes that satisfy the first condition in the previous theorem.\\
Similarly from Lemma \ref{conductor} we get
\begin{theorem} 
	If 2 is inert, then the odd local factor $g_-(s)_2$ is:
	\begin{equation*}
	g_-(s)_2 = (1+N(2)^{-2s}-2N(2)^{-2s}+2N(2)^{-3s}-2N(2)^{-3s})
	\end{equation*}
	and is 2 s split, then for the primes $I$ above 2 we have
	\begin{equation*}
	g_-(s)_I=(1-N(I)^{-2s}-N(I)^{-3s}+N(I)^{-3s}).
	\end{equation*}
\end{theorem}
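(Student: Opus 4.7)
The strategy is to revisit the enumeration of Lemma \ref{conductor}, but now to record for each nontrivial character $\chi$ on $\uu_\pp$ (with $\pp \mid 2$) whether $\chi(-1) = 0$ (even) or $\chi(-1) = 1$ (odd): the odd local factor $g_-(s)_\pp$ is assembled exactly as $g_\pp(s)$ but with the appropriate sign $\sigma_\chi = \pm 1$ attached to each term $N(\pp)^{-f_\chi s}$.

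I would first locate $-1$ in the unit filtration. Because $2$ is unramified in $K$ by the theorem preceding Lemma \ref{localfield}, any uniformizer $\pi$ at $\pp$ satisfies $\nu_\pp(2) = 1$, so $-1 = 1 - 2$ lies in $\uu_\pp^{(1)} \setminus \uu_\pp^{(2)}$; in particular $-1$ is a nontrivial element of order $2$ in every quotient $\uu_\pp/\uu_\pp^{(n)}$ with $n \geq 2$. I would then check that $-1$ is not a square in $\uu_\pp/\uu_\pp^{(3)}$: for the split case this reduces to the familiar fact that squares in $\zz_2^\times$ are congruent to $1$ modulo $8$, while $-1 \equiv 7 \pmod{8}$; for the inert case one repeats the computation inside $(\zz/8\zz[X]/(X^2+X+1))^\times$, using the description of Lemma \ref{multi8}, and finds the same conclusion.

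Once $-1$ is nontrivial modulo squares, the evaluation homomorphism $\mathrm{ev}_{-1}\colon \mathrm{Hom}(\uu_\pp/\uu_\pp^{(n)}, \zz/2\zz) \to \zz/2\zz$ is surjective for $n \in \{2,3\}$, so its kernel has index $2$ and exactly half of the characters of conductor at most $n$ are even and half are odd. Subtracting the count at conductor $\leq 2$ from the count at conductor $\leq 3$, and recalling that the trivial character is even and has conductor $0$, gives the parity breakdown at each exact conductor: in the split case, one odd character of conductor $2$, and one even plus one odd of conductor $3$; in the inert case, one even and two odd of conductor $2$, together with two even and two odd of conductor $3$. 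Feeding these counts into the definition of $g_-(s)_\pp$ with signs $\sigma_\chi = \pm 1$ produces precisely the two displayed formulas.

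The only genuinely nontrivial step is verifying that $-1$ fails to be a square in $\uu_\pp/\uu_\pp^{(3)}$, since this is what guarantees the clean half-and-half split of characters by parity; everything else reduces to a bookkeeping exercise on top of the character tables already assembled in Lemmas \ref{multi4}, \ref{multi8}, and \ref{conductor}.
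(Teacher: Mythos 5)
Your proposal is correct and follows essentially the same route as the paper: both enumerate the nontrivial characters on $\uu_\pp$ by conductor using the structure of $\uu_\pp/\uu_\pp^{(2)}$ and $\uu_\pp/\uu_\pp^{(3)}$ from Lemmas \ref{multi4} and \ref{multi8}, and determine each character's parity from the position of $-1$ in these quotients, arriving at the same tallies (split: one odd of conductor 2, one even and one odd of conductor 3; inert: one even and two odd of conductor 2, two even and two odd of conductor 3). The only cosmetic difference is that you organize the count via surjectivity of evaluation at $-1$ (so exactly half the characters at each level are odd, since $-1$ is a non-square there), whereas the paper reads off the same counts from the explicit images $(0,1,0)$ and $(0,1,0,0)$ of $-1$ in the displayed coordinates.
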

\begin{proof}
	Same as for the ordinary local factor, but we use Lemmas \ref{multi4} and \ref{multi8} to figure out what characters are odd.\\
	If 2 is split, then -1 maps to $ 1 \in \zz/2\zz \cong (\zz/4\zz)^\times \cong \uu/\uu^{(2)}$ and to $(1,0) \in \zz/2\zz \times \zz/2\zz \cong (\zz/8\zz)^\times \cong \uu/\uu^{(3)}$, so one character of local conductor 2 is odd and two characters of local conductor at most 3 are odd, onee of them has local conductor 2. In total there is one odd character of local conductor 2 and one odd character of local conductor 3. Thus the local factor, denoted $g_-(s)_I$, is $(1-N(I)^{-2s}-N(I)^{-3s}+N(I)^{-3s})$.\\
	If 2 is inert,  -1 maps to $(0,1,0) \in \zz/3\zz \times \zz/2\zz \times \zz/2\zz \cong \uu/\uu^{(2)}$ and to 
	$(0,1,0,0) \in \zz/3\zz \times \zz/2\zz \times \zz/2\zz \times \zz/4\zz \cong \uu/\uu^{(3)}$ and we can see that two characters of conductor 2 are odd, and two characters of conductor 3 are odd, and thus the local factor is $(1+N(2)^{-2s}-2N(2)^{-2s}+2N(2)^{-3s}-2N(2)^{-3s})$.
	
\end{proof}

 \begin{theorem}
	The counting function $f_-(s) = \sum_{n \geq 1} b_n n^{-s}$ where $b_n$ is the number of continuous characters $\ii^\infty_k \to \zz/2\zz$ with absolute conductor $n$ that are even minus the number characters with conductor $n$ that are odd, can be written as
	\begin{align*}
	&f_-(s) =\prod_{\pp \text{ primes of }K} g_-(s)_\pp=\\ &=\prod_{\pp |2} g_-(s)_\pp \prod_{\pp \notin C, \pp  \neg | 2} (1+N(\pp)^{-s}) \prod_{\pp \in C} (1-N(\pp)^{-s})
	\end{align*}
	Furthermore it converges for $\mathrm{Re}(s)>1$.
\end{theorem}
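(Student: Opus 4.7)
The plan is to follow the proof of Theorem~\ref{ f0formula} line by line, inserting signs at every stage. First I would reindex the Dirichlet series as a sum over ideals: $f_-(s) = \sum_I b_I N(I)^{-s}$, where $b_I$ is the signed count of characters $\chi$ on $\ii_K^\infty$ whose conductor equals $I$, counted $+1$ if $\chi$ is even and $-1$ if it is odd. By Theorem~\ref{galois} each such $\chi$ decomposes uniquely as a finite sum $\chi = \sum_\pp \chi_\pp$ of local characters over distinct primes, and by Theorem~\ref{sums} the conductor is $I_\chi = \prod_\pp \pp^{f_{\chi_\pp}}$, so in particular $N(I_\chi)^{-s} = \prod_\pp N(\pp)^{-f_{\chi_\pp}s}$.

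The key new ingredient is the multiplicativity of the sign. The element $-1 \in K^\times$ sits diagonally in $\ii_K^\infty$, and Theorem~\ref{technic} says $\chi$ acts on $\ii_K^\infty$ by the formula $\chi(\ldots,a_\pp,\ldots) = \sum_\pp \chi_\pp(a_\pp)$, so
\[
\chi(-1) = \sum_\pp \chi_\pp(-1) \quad \text{in } \zz/2\zz.
\]
Writing $\sigma_\chi$ for $(-1)^{\chi(-1)}$ and similarly $\sigma_{\chi_\pp}$, this gives $\sigma_\chi = \prod_\pp \sigma_{\chi_\pp}$. That is precisely the multiplicative structure built into the local factors $g_-(s)_\pp$.

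I would then run the truncation step exactly as before. Let $D_k$ be the finite set of primes of $K$ with norm at most $k$ and $H_k$ the set of ideals supported on $D_k$. Expanding
\[
\prod_{\pp \in D_k} g_-(s)_\pp \;=\; \prod_{\pp \in D_k} \sum_{\chi_\pp} \sigma_{\chi_\pp} N(\pp)^{-f_{\chi_\pp}s},
\]
the terms in the expanded product correspond bijectively, via $\chi \leftrightarrow (\chi_\pp)_\pp$, to characters with $I_\chi \in H_k$, and each term contributes exactly $\sigma_\chi N(I_\chi)^{-s}$. Hence the truncated product equals $\sum_{I \in H_k} b_I N(I)^{-s}$.

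For convergence and the passage to the limit, I would observe $|b_I| \leq a_I$, with $a_I$ the unsigned count from Theorem~\ref{ f0formula}, which dominates the tail
\[
\Bigl| f_-(s) - \prod_{\pp \in D_k} g_-(s)_\pp \Bigr| \;\leq\; \sum_{I \notin H_k} a_I N(I)^{-\mathrm{Re}(s)},
\]
and this vanishes as $k \to \infty$ on $\mathrm{Re}(s) > 1$ by the convergence already established for $f_0$. Substituting the explicit values of $g_-(s)_\pp$ from the two theorems immediately preceding then produces the factored form in the statement. The only genuinely new point over the unsigned case is the distributivity of $\chi(-1)$ over the local summands, so that single line carries the entire substance of the argument.
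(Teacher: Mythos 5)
Your proposal is correct and follows essentially the same route as the paper: the paper's proof likewise reduces to the observation that a global character is even precisely when its decomposition contains an even number of odd local characters, so the signs multiply out correctly when the product of the $g_-(s)_\pp$ is expanded, with everything else carried over from the proof for $f_0(s)$. Your version merely spells out the truncation and the domination $|b_I|\leq a_I$ more explicitly than the paper does.
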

\begin{proof}
	The proof is similar to the one for the counting function $f_0(s)$.
	If we write a character on $\ii_K^\times$ as a sum of local characters on $\uu_\pp$, than the character is even iff there is an even number of odd characters in the sum, since $-1$ is then mapped to 0 in $\zz/2\zz$. If we multiply out the product, the terms correspond to sums of local characters and the sign is positive if the sum has even number of odd local characters and negative if it has an odd number of odd local characters.

\end{proof}
Finally the counting fuunction of $K$ is expressed as:
\begin{theorem}
	The counting function $f_K(s)$ can be written as
	\begin{equation*}
	f_K(s) = \mathfrak{d}_K^{-2s} \frac{1}{2} ( f_0(s)+f_-(s) )-\mathfrak{d}_K^{-2s}
	\end{equation*}
	where $K$ is a quadratic imaginary number field with odd class number different from $\qq(i), \qq(\sqrt{-2}),\qq(\sqrt{-3})$ and $ \mathfrak{d}_K$ is the discriminant of $K$ as a natural number.
\end{theorem}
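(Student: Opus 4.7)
The plan is to translate the defining sum of $f_K(s)$ into a sum over continuous characters $\chi:\ii^\infty_K\to\zz/2\zz$ and then use a parity trick to pick out the even ones. First I invoke Theorem \ref{galois} together with the identification $\ii^\infty_K\cap K^\times=\{+1,-1\}$ (noted just before Lemma \ref{technic}) to see that quadratic extensions $L/K$ are in bijection with the nontrivial continuous characters $\chi:\ii^\infty_K\to\zz/2\zz$ satisfying $\chi(-1)=0$, i.e.\ the \emph{nontrivial even characters}.

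Next I relate the absolute discriminant $|\mathfrak{d}_L|$ to the Artin conductor $\mathfrak{f}_\chi$. Since $\mathrm{Gal}(L/K)\cong\zz/2\zz$ has only two characters, namely the trivial one (with conductor $\ok_K$) and $\chi$ itself, Theorem \ref{disc} gives $\mathfrak{d}_{L/K}=\mathfrak{f}_\chi$. Applying the tower formula of Theorem \ref{discriminant} to $L/K/\qq$ then yields
\[
\mathfrak{d}_L \;=\; N_{K/\qq}(\mathfrak{f}_\chi)\cdot\mathfrak{d}_K^{2},
\]
so passing to natural numbers gives $|\mathfrak{d}_L|=N(\mathfrak{f}_\chi)\cdot\mathfrak{d}_K^{2}$. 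Therefore
\[
f_K(s)\;=\;\sum_{L/K\text{ quadratic}}|\mathfrak{d}_L|^{-s}\;=\;\mathfrak{d}_K^{-2s}\sum_{\chi\text{ nontrivial even}}N(\mathfrak{f}_\chi)^{-s}.
\]

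To evaluate the inner sum, I go back to the definitions of $f_0(s)$ and $f_-(s)$: every continuous character $\chi$ of absolute conductor $n$ contributes $+1$ to the $n^{-s}$-coefficient of $f_0$, and $+1$ or $-1$ to the $n^{-s}$-coefficient of $f_-$ according to whether $\chi$ is even or odd. Averaging cancels the odd contributions, leaving
\[
\tfrac{1}{2}\bigl(f_0(s)+f_-(s)\bigr)\;=\;\sum_{\chi\text{ even}}N(\mathfrak{f}_\chi)^{-s}.
\]
The only even character missing from the quadratic-extension sum is the trivial one, whose conductor is $\ok_K$ of norm $1$, contributing the constant term $1$. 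Removing it and multiplying through by $\mathfrak{d}_K^{-2s}$ gives precisely $\mathfrak{d}_K^{-2s}\cdot\tfrac{1}{2}(f_0(s)+f_-(s))-\mathfrak{d}_K^{-2s}$, as claimed.

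The argument is essentially bookkeeping; the only points requiring real attention are verifying that the conductor of $\chi$ viewed as a character of $\ii^\infty_K$ coincides with the relative discriminant ideal of $L/K$ (which follows from combining Theorem \ref{disc} with the local-to-global definition of $\mathfrak{f}_\chi$) and isolating the trivial character correctly, which is what produces the $-\mathfrak{d}_K^{-2s}$ correction.
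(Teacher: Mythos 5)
Your proposal is correct and follows essentially the same route as the paper's own proof: identify extensions with nontrivial even characters via Theorem \ref{galois}, convert conductors to discriminants via the conductor--discriminant and tower formulas, and use the average $\tfrac{1}{2}(f_0+f_-)$ to isolate the even characters, subtracting the trivial one. The only difference is that you spell out the step $\mathfrak{d}_{L/K}=\mathfrak{f}_\chi$ from Theorem \ref{disc} explicitly, which the paper leaves implicit.
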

\begin{proof}

 We know from Theorem \ref{galois} that quadratic extensions of $K$ correspond to nontrivial continuous on $\ii_K^\infty \to \zz/2\zz$ with $(\ii_K^\infty \cap K^\times)$ in the kernel. We have $(\ii_K^\infty \cap K^\times)=\lbrace+1,-1 \rbrace$ for our $K$ and so the extensions correspond to even nontrivial continuous characters. Furthermore the absolute discriminant of the extension corresponding to the character $\chi$ is from Theorem \ref{discriminant} equal to $N(I_\chi)\mathfrak{d}
_K^2$ where $I_\chi$ is the conductor of $\chi$. \\
The function $f_0$ counts all characters, and the function $f_-$ counts even characters minus odd characters. Therefore adding them with a factor of one half counts only even characters. The terms in this counting function have terms $N(I)^{-s}$, so we have to multiply it by $\mathfrak{d}_K^{-2s}$ to get terms with the absolute discriminant. The term $-\mathfrak{d}_K^{-2s}$ is the eliminate the trivial character which correspond to the extension $K/K$ which we don't count.\\
\end{proof}
\section{Asymptotical distribution of number fields}
We can use the counting function to get the asymptotical distribution of quadratic extensions of $K$.\\
\begin{theorem} \label{tauberian}
	Let $f(s) = \sum_{n \geq 1} a_n n^{-s} $ be convergent for $ \mathrm{Re}(s) > a > 0$. Assume
	that in the domain of convergence $ f(s) = g(s)(s-a)^{-w} + h(s)$ holds, where $g(s)$,$ h(s)$ are
	holomorphic functions in the closed half plane $\mathrm{Re}(s) \geq a$, and $g(a) \neq 0$, and $w > 0$. Then
	\begin{equation*}
	\sum_{1 \leq n \leq X} a_n = \frac{g(a)}{a \Gamma(w)} X^a (\log X)^{w-1} + o(X^a(\log X)^{w-1})
	\end{equation*}
	As a special case, if $f(s)$ converges for $\mathrm{Re}(s) > 1$ and has meromorphic continuation to
	$\mathrm{Re}(s) \geq 1$ with a simple pole at $s = 1$ with residue $r$, then
	\begin{equation*}
	\sum_{1 \leq n \leq X} a_n = rX+o(X)
	\end{equation*}
\end{theorem}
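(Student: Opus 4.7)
Since this is the classical Delange / generalized Wiener--Ikehara Tauberian theorem, my plan is to prove it by contour integration via a smoothed Perron formula. First, I would smooth the sum: for a sufficiently large integer $k$, consider the averaged partial sum $B(X) = \sum_{n \leq X} a_n (X-n)^k / k!$, whose Mellin representation
\[
B(X) = \frac{1}{2\pi i} \int_{c-i\infty}^{c+i\infty} f(s) \frac{X^{s+k}}{s(s+1)\cdots(s+k)} \, ds
\]
is absolutely convergent for any $c > a$ once $k$ is large enough. Smoothing is essential because the raw Perron integral lacks vertical decay; the polynomial factor $1/[s(s+1)\cdots(s+k)]$ supplies it.

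Next, I would shift the contour from $\mathrm{Re}(s) = c$ to $\mathrm{Re}(s) = a - \delta$ for small $\delta > 0$. By hypothesis, the integrand is holomorphic in the strip $a < \mathrm{Re}(s) \leq c$ and on the line $\mathrm{Re}(s) = a$ except at $s = a$, where $f$ carries the singularity $g(s)(s-a)^{-w}$. For non-integer $w$ this is a branch point rather than a pole, so I would indent the contour around $s = a$ using a Hankel contour $H$. Evaluating the Hankel integral via the identity $1/\Gamma(w) = (2\pi i)^{-1}\int_H e^t t^{-w} \, dt$, after the substitution $t = (s-a)\log X$, yields the main contribution
\[
\frac{g(a) \, X^{a+k} (\log X)^{w-1}}{a(a+1)\cdots(a+k) \, \Gamma(w)} \bigl(1 + o(1)\bigr).
\]
The shifted vertical line at $\mathrm{Re}(s) = a - \delta$ contributes $O(X^{a-\delta+k})$, which is genuinely smaller since $\delta > 0$.

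Finally, I would pass from the smoothed $B(X)$ back to $A(X) = \sum_{n \leq X} a_n$ by a Tauberian differencing of order $k$: iterated finite differences $\Delta_\eta^{k+1} B(X)$ at step $\eta = \eta(X) \to 0$ recover $A(X)$ asymptotically, provided the $a_n$ are non-negative (which holds in the counting-function application after one has split the signed series $f_-(s)$ appropriately). Clearing the polynomial denominator $a(a+1)\cdots(a+k)$ produces the stated asymptotic. The special $w = 1$ case follows by substituting $a = 1$, $g(1) = r$, so that $g(a)/(a\Gamma(w)) = r$ and $(\log X)^{w-1} = 1$.

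The main obstacle is the contour shift itself: the hypothesis gives holomorphy of $g$ and $h$ on the closed half-plane $\mathrm{Re}(s) \geq a$ but does not a priori supply control on their vertical growth, and it is exactly this gap that forces the smoothing step. An equally legitimate option, given that the theorem is a well-known black box, would be to omit the proof entirely and cite a standard reference such as Narkiewicz's \emph{Elementary and Analytic Theory of Algebraic Numbers} or Tenenbaum's \emph{Introduction to Analytic and Probabilistic Number Theory}, where it is proved in this exact form.
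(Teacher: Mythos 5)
The paper does not prove this theorem at all: its entire ``proof'' is the citation to the Corollary on page 121 of Narkiewicz, which is precisely the fallback option you mention in your last sentence. So your closing remark coincides with what the paper actually does, and for a standard black-box Tauberian theorem that is a perfectly acceptable choice.

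Your primary sketch, however, has a genuine gap. The contour shift from $\mathrm{Re}(s)=c$ to $\mathrm{Re}(s)=a-\delta$ is not available under the stated hypotheses: $f$ is only assumed to continue to the \emph{closed} half-plane $\mathrm{Re}(s)\geq a$ (i.e.\ to some open neighbourhood of it, with no uniform width), so there is no fixed vertical line to the left of $a$ on which the integrand is even defined. You correctly identify the second problem yourself --- no vertical growth control on $g$ and $h$ --- but the smoothing factor $1/[s(s+1)\cdots(s+k)]$ does not repair it: decay of the kernel is useless if $f$ may grow arbitrarily fast along the boundary line, and holomorphy on a closed half-plane gives no such bound. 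These two obstructions are exactly why the theorem in this generality (Delange's theorem) is proved by a Wiener--Ikehara-type argument --- Fourier analysis against approximate identities, exploiting non-negativity of the $a_n$ --- rather than by contour shifting; that is the route taken in Narkiewicz. A minor but related point: the non-negativity hypothesis $a_n\geq 0$ is genuinely needed for the Tauberian step and is silently omitted from the statement as reproduced in the paper; in the application it holds for $f_K$ because the coefficients count fields, while the signed series $f_-$ only enters through the holomorphic (error) part. If you want to keep a contour-integration proof, you must either strengthen the hypotheses (continuation to a strip $\mathrm{Re}(s)>a-\delta$ plus polynomial growth there) or switch to the Newman/Delange analytic framework; as written, the shift in your second step cannot be justified.
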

\begin{proof}
	Corollary on page 121 of \cite{narkiewicz}
\end{proof}
We know that the function $f_0(s)$ is up to some simple factors equal to  $\frac{\zeta_K(s)}{\zeta_K(2S)}$. This function satisfies the special case of the previous theorem, we just have to compute the residue. The residue is $\mathrm{Res}_{s=1} \frac{\zeta_K(s)}{\zeta_K(2s)} = \frac{\mathrm{Res}_{s=1}\zeta_K(s)}{\zeta_K(2)}$. The residue of the Dedekind zeta function is given by the class number formula.
\begin{theorem} \label{residue}
	The residue of the Dedekind zeta function of the number field $K$ is
	\begin{equation*}
	\mathrm{Res}_{s=1}\zeta_K(s) = \frac{2^r_1(2\pi)^{r_2}\mathrm{Reg}_K h_K}{w_K \sqrt{\mathfrak{d}_K}}
	\end{equation*}
	where $r_1$ and $r_2$ is the number of real and complex places respectively, $\mathrm{Reg}_K$ is the regulator of $K$, $h_K$ is the class number and $w_K$ is the number of roots of unity in $K$.
	In particular, for an imaginary quadratic number field $r_1=0, r_2=1$ and $\mathrm{Reg}_K=1$
\end{theorem}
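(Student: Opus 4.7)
The plan is to follow Dirichlet's classical approach: decompose $\zeta_K(s)$ as a sum of partial zeta functions over ideal classes, count ideals in each class by a lattice-point argument in Minkowski space, and extract the residue from the resulting counting asymptotic via the Tauberian converse (Theorem \ref{tauberian}). Because the leading coefficient of the count per class will depend only on global invariants of $K$ and not on the chosen class, multiplying by $h_K$ will recover the stated formula.

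First, for each $[\mathfrak{c}] \in Cl_K$ I would introduce the partial zeta function
\begin{equation*}
\zeta_K(s,[\mathfrak{c}]) = \sum_{\mathfrak{a} \in [\mathfrak{c}]} N(\mathfrak{a})^{-s},
\end{equation*}
so that $\zeta_K(s) = \sum_{[\mathfrak{c}]} \zeta_K(s,[\mathfrak{c}])$. Integral ideals in the class $[\mathfrak{c}]$ are in bijection with nonzero principal sub-ideals of a fixed fractional representative $\mathfrak{c}^{-1}$, hence with elements of $(\mathfrak{c}^{-1}\setminus\{0\})/\ok_K^\times$; under this bijection $N(\mathfrak{a}) = N(\mathfrak{c})\,|N_{K/\qq}(\alpha)|$. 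Counting ideals of bounded norm in a class therefore reduces to counting unit-orbits of lattice points of bounded norm in $\mathfrak{c}^{-1}$.

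Second, embed $K$ into $K_\rr = \rr^{r_1}\times \cc^{r_2}$ by the Minkowski map and $\ok_K^\times$ into $\rr^{r_1+r_2}$ by the logarithm. The fractional ideal $\mathfrak{c}^{-1}$ sits as a full lattice of covolume $2^{-r_2} N(\mathfrak{c})^{-1}\sqrt{\mathfrak{d}_K}$, and by Dirichlet's unit theorem $\ok_K^\times/\mu_K$ projects to a lattice of covolume $\mathrm{Reg}_K$ in the trace-zero hyperplane. Fix a fundamental domain $F \subset K_\rr^\times$ for the multiplicative action of $\ok_K^\times/\mu_K$; then $\ok_K^\times$-orbits correspond to points of $F$ modulo the finite group $\mu_K$ of order $w_K$. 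A standard geometry-of-numbers count on $F\cap\{|N|\le Y\}$ (using that this region has Lipschitz boundary) yields
\begin{equation*}
\#\{\mathfrak{a}\in[\mathfrak{c}]:N(\mathfrak{a})\le X\} = \frac{2^{r_1}(2\pi)^{r_2}\mathrm{Reg}_K}{w_K\sqrt{\mathfrak{d}_K}}\,X + O(X^{1-1/n}),
\end{equation*}
with the $(2\pi)^{r_2}$ arising from the angular integration in each complex factor, the $2^{r_1}$ from the sign choices in each real factor, and the $N(\mathfrak{c})$ factors cancelling between the covolume and the rescaling $Y = X/N(\mathfrak{c})$. Summing over the $h_K$ classes and invoking the Tauberian converse stated in Theorem \ref{tauberian} delivers the residue.

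The main obstacle is the precise volume bookkeeping in the third step: identifying $\cc\cong\rr^2$ contributes the $2^{-r_2}$ in the covolume, polar integration on the complex factors contributes $(2\pi)^{r_2}$, the logarithmic embedding produces $\mathrm{Reg}_K$ as a Jacobian, and one must verify that these combine exactly into the stated constant after dividing by $w_K$ for the finite quotient by $\mu_K$. For the imaginary quadratic specialization $r_1=0$, $r_2=1$, the unit group $\ok_K^\times$ is finite so by the standard convention $\mathrm{Reg}_K = 1$, as noted in the theorem.
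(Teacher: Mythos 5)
Your outline is essentially correct, but it takes a genuinely different route from the paper, which does not prove the statement at all: it simply cites Neukirch VII.5.11, where the residue is extracted from Hecke's theta-series integral representation of the completed zeta function (the method that also yields the analytic continuation and functional equation). You instead sketch the classical Dirichlet--Dedekind argument: split $\zeta_K$ into partial zeta functions over ideal classes, identify ideals of a class with $\ok_K^\times$-orbits in a fractional ideal, and count lattice points in a fundamental domain for the unit action. Your constants check out --- the covolume $2^{-r_2}N(\mathfrak{c})^{-1}\sqrt{\mathfrak{d}_K}$, the cancellation of $N(\mathfrak{c})$ against the rescaling $Y=X/N(\mathfrak{c})$, and the class-independence of the leading term are all as they should be --- and this approach is more elementary and self-contained than the paper's reference, at the cost of only giving information at $s=1$ rather than the full analytic continuation. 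One correction: the last step, passing from $\#\{\mathfrak{a}:N(\mathfrak{a})\le X\}=cX+O(X^{1-1/n})$ to $\mathrm{Res}_{s=1}\zeta_K(s)=c$, is \emph{not} supplied by Theorem \ref{tauberian}, which goes in the opposite (hard, Tauberian) direction, from analytic data about $f(s)$ to the partial-sum asymptotic. What you need here is the easy Abelian direction: Abel/partial summation applied to $a_n = cn+O(n^{1-1/n})$ shows directly that $\sum a_n n^{-s} - c\,\zeta(s)$ (hence $\zeta_K(s)-c/(s-1)$ up to a function holomorphic at $s=1$) extends past $\mathrm{Re}(s)=1-1/n$, so the residue is $c$. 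That step is elementary, but you should not attribute it to the cited Tauberian theorem.
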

\begin{proof}
	See \cite{neukirch} VII. 5.11.
\end{proof}
The value $\zeta_K(2)$ for imaginary quadratic number field is calculated in \cite{zagier} Theorem 2.
\begin{equation*}
\zeta_K(2) = \frac{\pi^2}{6\sqrt{\mathfrak{d}_K}} \sum_{0<n<\mathfrak{d}_K} \left( \frac{-\mathfrak{d}_K}{n} \right) A \left( \cot(\frac{\pi n}{\mathfrak{d}_K}) \right)
\end{equation*}
where the function $A(x)$ is defined as
\begin{equation*}
A(x) = 2 \int_0^\infty \frac{t \text{d}t}{x \sinh^2t+ x^{-1} \cosh^2t}
\end{equation*}	
and $(\frac{a}{b})$ is the Kronecker symbol.\\

Now for the function $f_-(s)$. We can express it using Dirichlet L-functions. A Dirichlet character is a homomorphism $\chi :(\zz/m\zz)^\times \to \cc^\times$. We can extend it to $\zz$ by defining $\chi(a)=0$ if $\mathrm{gcd}(m,a) \neq 1$. For example we have a character $\chi_4 : (\zz/4\zz)^\times \to \cc$ where $\chi_4(3)=-1$. For an imaginary quadratic number field $K$ there is a character (the Kronecker symbol $(\frac{-\mathfrak{d}_K}{x})$) $\chi_K :(\zz/\mathfrak{d}_K\zz)^\times \to \cc$ such that $\chi_K(p)$ is 1 if $p$ splits in $K$ and -1 if $p$ is inert in $K$.\\

We will denote $I,S,R$ the set of inert, split and ramified primes of $K$ not above 2 and  $I_i,S_i,R_i$ $ i=1,3$ the subset of primes that are $i$ $(mod$ $4)$.
For every character $\chi : (\zz/m\zz)^\times \to \cc^\times$ we have an L-function

	\begin{equation*}
	L(\chi,s) = \prod_{p \text{ prime}} \frac{1}{1-\chi(p)p^{-s}}
	\end{equation*}
	which converges to a holomorphic function for $\mathrm{Re}(s)>1$.

If we ignore the factors from primes above 2, the function $f_-(s)$ is equal to 
\begin{align*}
& \prod_{\pp \notin C} (1+N(\pp)^{-s}) \prod_{\pp \in C} (1-N(\pp)^{-s}) =  \\
&= \prod_{p \in I} (1+p^{-2s}) \prod_{p \in S_1} (1+p^{-s})^2 
\prod_{p \in S_3} (1-p^{-s})^2 \times R(s)
\end{align*}
since the norm of a prime ideal $\pp$ above $p$ is $p^2$ if it is inert and $p$ otherwise and $p$ is odd in all products.
Here $R(s)$ is the factor for the (finitely many) ramified primes
$R(s) = \prod_{\pp \in R_1} (1+N(\pp)^{-s}) \times \prod_{\pp \in R_3} (1-N(\pp)^{-s})$.\\
\\

Now we can write:
\begin{align*}
&\prod_{p \in I} (1+p^{-2s}) \prod_{p \in S_1} (1+p^{-s})^2  
\prod_{p \in S_3} (1-p^{-s})^2 R(s) = \\
&= \frac{\prod_{p \in I} (1-p^{-4s}) \prod_{p \in S} (1-p^{-2s})^2 }{\prod_{p \in I} (1-p^{-2s}) \prod_{p \in S_1} (1-p^{-s})^2 \prod_{p \in S_3} (1+p^{-s})^2 }  \times\\
&\times \frac{\prod_{p \in R} (1-p^{-2s})}{\prod_{p\in  R_1} (1-p^{-s}) \prod_{p \in R_3} (1+p^{-s})}=\\
& = \frac{B(s)/\zeta_K(2s)}{\prod_{p \in I_1} (1-p^{-s})
	\prod_{p \in I_3} (1+p^{-s}) \prod_{p \in S_1 \cup R_1} (1-p^{-s}) \prod_{p \in S_3 \cup R_3} (1+p^{-s})} \\
& \times \frac{1}{\prod_{p \in I_1} (1+p^{-s})
	\prod_{p \in I_3} (1-p^{-s}) \prod_{p \in S_1} (1-p^{-s}) \prod_{p \in S_3} (1+p^{-s})}=  \\
& = \frac{L(\chi_4,s) L(\chi_4 \chi_K,s)}{\zeta_K(2s)} \times B(s)
\end{align*}
where $\chi_4 \chi_K$ is a character on $(\zz/\mathrm{lcm}(4,\mathfrak{d}_K)\zz)^\times$ and $B(s)= \prod_{\pp | 2 } 1/(1-N(\pp)^{-2s})$ is the factor of $\zeta_K(2s)$ at 2. For this character $\chi_4 \chi_K(p) = 1$ if $p$ is inert and 3 mod 4 or split and 1 mod 4 , $\chi_4\chi_K(p)=0$ is $p$ is ramified or 2 and $\chi_4 \chi_K(p)=-1$ otherwise.
We can summarize it in the following theorem.
\begin{theorem}
	\label{zeta}
	Let $K$ be a imaginary quadratic number field with odd class number not equal to $\qq[i],\qq[\sqrt{-3}],\qq[\sqrt{-2}]$.
	Then the function $f_-(s)$ can be written as 
	\begin{equation*}
	f_-(s) = \frac{L(\chi_4,s) L(\chi_4 \chi_K,s)}{\zeta_K(2s)} \times \prod_{\pp|2} g_-(s)_\pp \times B(s)
	\end{equation*}
	In particular, it is holomorphic for $\mathrm{Re}(s) \geq 1$.
\end{theorem}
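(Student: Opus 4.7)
The plan is to start from the Euler product for $f_-(s)$ established in the preceding theorem, split it into a $2$--part and an odd part, and then massage the odd part into a ratio of Dirichlet $L$--functions by the identity $(1-x)(1+x) = 1-x^2$. By the previous theorem,
\begin{equation*}
f_-(s) = \prod_{\pp \mid 2} g_-(s)_\pp \cdot \prod_{\pp \in C,\, \pp\nmid 2}(1 - N(\pp)^{-s}) \cdot \prod_{\pp \notin C,\, \pp\nmid 2}(1 + N(\pp)^{-s}),
\end{equation*}
so I would isolate the $2$--factor $\prod_{\pp \mid 2} g_-(s)_\pp$ and focus on the odd part. Using that $N(\pp) = p^2$ for inert primes and $N(\pp) = p$ for split or ramified primes, I would group odd rational primes $p$ into the six classes $I_1, I_3, S_1, S_3, R_1, R_3$ and write the odd part explicitly in terms of $p^{-s}$, as is already done in the paragraph immediately preceding the theorem.

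Next I would multiply numerator and denominator of each factor by $(1 \mp p^{-s})$ so as to replace $(1 \pm p^{-s})^k$ by quotients whose numerators are powers of $(1-p^{-2s})$. The point is that $\prod_{p \text{ odd}} (1-p^{-2s})$ reassembled over inert, split and ramified classes is precisely $\zeta_K(2s)^{-1}$ times the inverse of the $2$--factor $B(s)^{-1} = \prod_{\pp\mid 2}(1-N(\pp)^{-2s})$ of $\zeta_K(2s)$; this is where the extra $B(s)$ in the final formula comes from. What remains in the denominator is an Euler product of the form $\prod_p (1 - \varepsilon_p p^{-s})^{a_p}$ with $\varepsilon_p \in \{\pm 1, 0\}$ and $a_p \in \{0,1,2\}$, and by matching the signs against the prescription $\chi_4\chi_K(p) = 1$ for $p \in I_3 \cup S_1$ and $\chi_4\chi_K(p) = -1$ for $p \in I_1 \cup S_3$ (with the usual values of $\chi_4$) one checks case by case that this rearranges exactly into $L(\chi_4,s)\,L(\chi_4\chi_K,s)$.

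The verification is essentially a six-row table: for each of $I_1, I_3, S_1, S_3, R_1, R_3$ one checks that the odd-prime local factor of $f_-(s)$ agrees with the corresponding local factor of $L(\chi_4,s) L(\chi_4\chi_K,s) \zeta_K(2s)^{-1}$. For example at $p \in S_1$ one has $(1+p^{-s})^2 = (1-p^{-2s})^2 / (1-p^{-s})^2$, which is exactly the $S_1$--contribution to $L(\chi_4,s) L(\chi_4\chi_K,s) / \zeta_K(2s)$ (after using that $\chi_4(p) = \chi_4\chi_K(p) = 1$ and that two primes of $K$ lie above $p$). The main obstacle is not any single step but the bookkeeping: one must be careful that the ramified primes contribute $0$ to both $L$-functions (so only $\zeta_K(2s)$ touches them) and that the $2$--adic factors of the $L$-functions are trivial, so that the full $2$--adic part of $f_-(s)$ is exactly $\prod_{\pp\mid 2} g_-(s)_\pp$, explaining the multiplicative correction by $B(s)$.

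Finally, to conclude holomorphy for $\mathrm{Re}(s) \geq 1$: both $\chi_4$ and $\chi_4\chi_K$ are non-principal Dirichlet characters, so $L(\chi_4,s)$ and $L(\chi_4\chi_K,s)$ are entire; $\zeta_K(2s)$ has no zeros on $\mathrm{Re}(2s) \geq 2$, hence $1/\zeta_K(2s)$ is holomorphic there; and $B(s)$ together with the finite product $\prod_{\pp \mid 2} g_-(s)_\pp$ is holomorphic on $\mathrm{Re}(s) > 0$ since $|N(\pp)^{-2s}| < 1$ in that region. Combining these gives the claimed region of holomorphy.
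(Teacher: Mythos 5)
Your overall route is the same as the paper's: the paper carries out exactly this regrouping in the displayed computation immediately preceding the theorem (splitting the odd primes into $I_1,I_3,S_1,S_3,R_1,R_3$, converting $(1\pm p^{-s})$ into $(1-p^{-2s})/(1\mp p^{-s})$ to extract $B(s)/\zeta_K(2s)$, and assembling the leftover denominators into $L(\chi_4,s)L(\chi_4\chi_K,s)$), and then the proof proper only adds the $2$-adic factors and the holomorphy remark. Your $S_1$ and inert checks are right, and your holomorphy argument (non-principal characters give entire $L$-functions, $\zeta_K(2s)\neq 0$ for $\mathrm{Re}(s)\geq 1$, the $2$-adic factors are finite and nonvanishing) matches the paper's.

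There is, however, one concrete slip that would make your six-row table fail at the rows $R_1$ and $R_3$: you assert that ``the ramified primes contribute $0$ to both $L$-functions (so only $\zeta_K(2s)$ touches them).'' That is true for $\chi_4\chi_K$, since a ramified odd prime divides $\mathfrak{d}_K$ and hence the modulus $\mathrm{lcm}(4,\mathfrak{d}_K)$, but it is false for $\chi_4$: an odd ramified prime is coprime to $4$, so $\chi_4(p)=\pm 1$ and $L(\chi_4,s)$ carries the nontrivial Euler factor $(1-\chi_4(p)p^{-s})^{-1}$ at $p$. This factor is needed: at a ramified $p$ the contribution of $f_-(s)$ is $1\pm p^{-s}$, whereas $\zeta_K(2s)^{-1}$ alone supplies only $1-p^{-2s}$, and the discrepancy $(1\mp p^{-s})^{-1}$ is exactly the $\chi_4$-factor (this is visible in the paper's computation, where $R_1$ and $R_3$ are grouped with $S_1$ and $S_3$ inside the product that becomes $L(\chi_4,s)^{-1}$). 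The case is not vacuous: for the fields the theorem is ultimately applied to, $K=\qq(\sqrt{-p})$ with $p\equiv 3 \pmod 4$, the unique odd ramified prime is $p$ itself, lying in $R_3$. The fix is one line, but as written your verification would not close at the ramified primes.
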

\begin{proof}
	The equation is clear from the preceding discussion we just added the factors for the primes above 2. The Dedekind zeta function $\zeta(2s)$ is holomorphic and nonzero on the halfplane $\mathrm{Re}(s)>1/2$. Since the characters $\chi_4$ and $\chi_4 \chi_K$ are nontrivial if $K \neq \qq(i)$, the L-functions are holomorphic for all $s$.
\end{proof}
The conditions we have on the number field force it to be of the following form:
\begin{theorem}
    If $K$ is an imaginary quadratic number field with odd class number not equal to $\qq(i),\qq(\sqrt{-2})$ then it is of the form $\qq(\sqrt{-p})$ where $p$ is a prime that is $3$ $(mod$ $4)$.
\end{theorem}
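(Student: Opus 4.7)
\medskip

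\noindent\textit{Proof plan.} Write $K = \qq(\sqrt{-m})$ with $m$ a positive squarefree integer; the goal is to show $m$ is a prime congruent to $3$ modulo $4$. The plan is to pin down $m \pmod 4$ first, and then argue that $m$ has only one prime factor.

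For the first step I would invoke the earlier theorem that $2$ is unramified in $K/\qq$ (applicable since $K$ has odd class number and $K \neq \qq(i),\qq(\sqrt{-2})$). By the standard formula, $\mathfrak{d}_K = m$ when $m \equiv 3 \pmod 4$ and $\mathfrak{d}_K = 4m$ otherwise, and a rational prime ramifies in $K$ precisely when it divides $\mathfrak{d}_K$. So the unramifiedness of $2$ forces $m \equiv 3 \pmod 4$; in particular $m$ is odd and $\ok_K = \zz[(1+\sqrt{-m})/2]$.

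For the second step I would argue by contradiction, following the same strategy the paper uses to rule out ramification at $2$. Suppose $m = p_1 \cdots p_k$ with $k \geq 2$ distinct odd primes. Each $p_i$ ramifies (since $p_i \mid \mathfrak{d}_K$), so $(p_i) = \pp_i^2$ for some prime ideal $\pp_i \subset \ok_K$; I aim to show that $\pp_1$ is not principal, which forces $[\pp_1]$ to have order $2$ in $\mathrm{Cl}_K$ and contradicts the oddness of $h_K$. If $\pp_1 = (\alpha)$, then $\alpha^2 = \pm p_1$ since $\ok_K^\times = \{\pm 1\}$ (the one remaining case with extra units, $K = \qq(\sqrt{-3})$, already has the required form $\qq(\sqrt{-p})$ with $p = 3$, so can be handled separately at the outset). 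Writing $\alpha = (a+b\sqrt{-m})/2$ with $a \equiv b \pmod 2$ and expanding $\alpha^2$, the vanishing of the coefficient of $\sqrt{-m}$ forces $ab = 0$; the subcase $b=0$ gives $\alpha \in \zz$ with $\alpha^2 = \pm p_1$, impossible since $p_1$ is prime, while $a = 0$ (with $b = 2d$ forced to be even) gives $m d^2 = p_1$, so $m = p_1$ and hence $k = 1$, a contradiction.

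The only genuine subtlety is the different unit group at $\qq(\sqrt{-3})$, but since that field already satisfies the conclusion it can be set aside from the start; the rest is the standard discriminant/ramification dictionary together with a short elementary computation in $\ok_K$.
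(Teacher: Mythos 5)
Your proposal is correct, and the key step is genuinely different from the paper's. The paper handles the ``$D$ must be prime'' part by citing Gauss's genus theory as a black box: the class group of $\qq(\sqrt{-D})$ contains exactly $2^{r-1}$ elements of order $\leq 2$, where $r$ is the number of primes dividing $D$, so odd class number forces $r=1$. You instead prove the one implication actually needed by hand: assuming $m$ has at least two prime factors, you take a ramified odd prime $p_1$ with $(p_1)=\pp_1^2$ and show $\pp_1$ cannot be principal, since $\alpha^2=\pm p_1$ with $\alpha=(a+b\sqrt{-m})/2$, $a\equiv b \pmod 2$, forces $ab=0$ and both subcases fail (one because $\pm p_1$ is not a rational square, the other because $d^2m=p_1$ forces $m=p_1$); hence $[\pp_1]$ has order $2$, contradicting odd $h_K$. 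This deliberately mirrors the paper's own earlier argument that $2$ is unramified, extended to odd primes, and your handling of the unit-group exception $\qq(\sqrt{-3})$ and of the first step (unramifiedness of $2$ forces $m\equiv 3 \pmod 4$ via the discriminant formula) matches the paper. What the paper's route buys is brevity and the stronger ``exactly $2^{r-1}$'' statement for free; what yours buys is a self-contained, fully elementary proof that avoids importing genus theory at all. Both are sound.
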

\begin{proof}
        Because 2 is not ramified, $K$ is of the form $\qq(\sqrt{-D})$ for some $D = 3$ $(mod$ $4$) squarefree. By Gauss's genus theory (see chapter 4 and 6 in \cite{buell}) the class group contains exactly $2^{r-1}$ elements of order $\leq 2$ where $r$ is the number of primes dividing $D$. Therefore $D$ must be prime.
\end{proof}

We can put everything together to get the main result
\begin{theorem}
	\label{main}
For a quadratic imaginary number field $K=\qq(\sqrt{-p})$ with $p$ prime $ > 3$ that is $3$  $(mod$ $4)$, the function $a_K(n) = \#\lbrace L / K | \deg(L/K)=2, \mathfrak{d}_L \leq n\rbrace$ is asymptotically equal to
	\begin{equation*}
	a_K(n) = Cn + o(n)
	\end{equation*}
	where $C$ is given by
	\begin{equation*}
	C = \frac{1}{2\mathfrak{d}_K^2} \frac{ \mathrm{Res}_{s=1}\zeta_K(s)}{\zeta_K(2)} \prod_{\pp|2} \frac{g_\pp(1)}{(1+N(\pp)^{-1})} =\frac{\mathfrak{d}_K^{-5/2} \pi h_K}{2\zeta_K(2)} \prod_{\pp |2} \frac{g_\pp(1)}{(1+N(\pp)^{-1})}
	\end{equation*}
\end{theorem}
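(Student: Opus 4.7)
The plan is to apply the Tauberian theorem (Theorem \ref{tauberian}) to the counting function $f_K(s)$, exploiting the factorizations of $f_0(s)$ and $f_-(s)$ already established in Theorems \ref{real} and \ref{zeta}, and then to rewrite the residue in closed form using the class number formula (Theorem \ref{residue}).

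First I would split
\begin{equation*}
f_K(s) = \tfrac{1}{2}\mathfrak{d}_K^{-2s} f_0(s) + \tfrac{1}{2}\mathfrak{d}_K^{-2s} f_-(s) - \mathfrak{d}_K^{-2s}
\end{equation*}
and examine each summand near the line $\mathrm{Re}(s)=1$. The factor $\mathfrak{d}_K^{-2s}$ is entire, $f_-(s)$ is holomorphic on $\mathrm{Re}(s) \geq 1$ by Theorem \ref{zeta}, and by Theorem \ref{real} the function $\mathfrak{d}_K^{-2s} f_0(s)$ is meromorphic on $\mathrm{Re}(s) > 1/2$ with a unique simple pole at $s=1$ inherited from the pole of $\zeta_K(s)$. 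Consequently $f_K(s)$ has a meromorphic continuation to $\mathrm{Re}(s) \geq 1$ whose only singularity there is a simple pole at $s=1$, with residue
\begin{equation*}
\mathrm{Res}_{s=1} f_K(s) = \frac{1}{2\mathfrak{d}_K^{2}} \cdot \frac{\mathrm{Res}_{s=1}\zeta_K(s)}{\zeta_K(2)} \prod_{\pp|2} \frac{g_\pp(1)}{1+N(\pp)^{-1}},
\end{equation*}
obtained by evaluating the remaining holomorphic factors of $\mathfrak{d}_K^{-2s} f_0(s)$ at $s=1$.

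The special case of the Tauberian theorem then yields $\sum_{n \leq X} a_n = \mathrm{Res}_{s=1} f_K(s) \cdot X + o(X)$, which is the first form of the claimed asymptotic. To obtain the more explicit second form for $C$, I would substitute the class number formula of Theorem \ref{residue} in our setting: since $K$ is imaginary quadratic with $K \neq \qq(i),\qq(\sqrt{-3})$ we have $r_1=0$, $r_2=1$, $\mathrm{Reg}_K=1$ and $w_K=2$, so
\begin{equation*}
\mathrm{Res}_{s=1}\zeta_K(s) = \frac{2\pi h_K}{2\sqrt{\mathfrak{d}_K}} = \frac{\pi h_K}{\sqrt{\mathfrak{d}_K}}.
\end{equation*}
Plugging this in and collecting the power $\mathfrak{d}_K^{-2} \cdot \mathfrak{d}_K^{-1/2} = \mathfrak{d}_K^{-5/2}$ produces the stated closed form.

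The main obstacle is really the analytic input supplied by Theorems \ref{real} and \ref{zeta}; once those are in hand, the present proof reduces to bookkeeping the pole of the Dedekind zeta function and checking that the remaining Euler factors at primes above $2$ contribute holomorphic, nonvanishing data at $s=1$. The one subtlety worth stressing is that the Tauberian theorem requires the meromorphic continuation on the \emph{closed} half-plane $\mathrm{Re}(s) \geq 1$, which is precisely what Theorems \ref{real} and \ref{zeta} supply (via nonvanishing of $\zeta_K$ on $\mathrm{Re}(s) \geq 1$ and nontriviality of the characters $\chi_4$ and $\chi_4\chi_K$, which holds thanks to the restriction $K \neq \qq(i)$); the ambient term $-\mathfrak{d}_K^{-2s}$ contributes a holomorphic correction that does not affect the residue.
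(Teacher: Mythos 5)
Your proposal is correct and follows essentially the same route as the paper: apply the special case of the Tauberian theorem to $f_K(s)$, observe that only the $\tfrac12\mathfrak{d}_K^{-2s}f_0(s)$ term carries the simple pole at $s=1$ (via $\zeta_K(s)/\zeta_K(2s)$ from Theorem \ref{real}, with $f_-$ holomorphic there by Theorem \ref{zeta}), and then evaluate the residue with the class number formula. Your write-up is in fact more explicit than the paper's about the residue bookkeeping and the $w_K=2$ substitution, but there is no substantive difference in method.
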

\begin{proof}
	Apply the special case of Theorem \ref{tauberian}  to the function \\$f_K(s) = \mathfrak{d}_K^{-2s} \frac{1}{2}(f_0(s)+f_-(s))-\mathfrak{d}_K^{-2s}$. This function is holomorphic for $\mathrm{Re}(s) \geq 1$ except for a pole at 1.
	The functions $f_-(s)$ and $\mathfrak{d}_K^{-2s}$ have no pole at 1, so it doesn't affect the asymptotic growth. The number $C$ is the residue of $f_K(s)$ at 1, which is thus $ \frac{1}{2\mathfrak{d}_K^2}$ times the residue of $f_0(s)$.  Then use Theorem \ref{residue} to get the formula for the residue.
\end{proof}
\section{Special cases}
We ignored the cases of $K = \qq(i),\qq(\sqrt{-3}), \qq(\sqrt{-2})$. We will show that the main Theorem \ref{main} holds for these fields in this form: \\
\begin{theorem}
		For a quadratic imaginary number field $K$ with odd class number the function $a_K(n) = \lbrace L / K | \deg(L/K)=2, \mathfrak{d}_L \leq n\rbrace$ is asymptotically equal to
	\begin{equation*}
	a_K(n) = Cn + o(n)
	\end{equation*}
	where $C$ is given by
	\begin{equation*}
	C = \frac{\mathfrak{d}_K^{-5/2} \pi h_K}{w_K \zeta_K(2)} \prod_{\pp |2} \frac{g_\pp(1)}{(1+N(\pp)^{-1})}
	\end{equation*}
	where $w_K$ is the number of roots of unity in $K$ and $g_\pp(s)$ is 
	
\begin{equation*}
	 g_\pp(s)= \left\{\begin{array}{lr}
		(1+2^{-2s}+2\cdot2^{-3s}), & \text{if 2 is split in $K$}\\
		(1+3\cdot4^{-2s}+4\cdot4^{-3s}), &\text{if 2 is inert in $K$}\\
		(1+2^{-2s}+2\cdot2^{-4s}+4\cdot2^{-5s}), & \text{if 2 is ramified in $K$}
	\end{array}\right \}
\end{equation*}

\end{theorem}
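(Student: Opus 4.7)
The plan is to re-run the proof of Theorem \ref{main} separately for each of $K \in \{\qq(i), \qq(\sqrt{-2}), \qq(\sqrt{-3})\}$, adjusting for (a) the extra roots of unity in $\ok_K^\times$ and (b) ramification of $2$ in $\qq(i)$ and $\qq(\sqrt{-2})$. The scheme---build the counting function, express it via $f_0$ and a single parity twist $f_-$, apply Theorem \ref{tauberian}, and use Theorem \ref{residue} for the residue---is unchanged. The factor $1/w_K$ in $C$ enters automatically via the denominator of $\mathrm{Res}_{s=1}\zeta_K(s)$, while the counting itself still contributes only a factor $\tfrac12$ per parity condition.

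First I would handle $K=\qq(\sqrt{-3})$. Here $2$ is inert (since $-3 \equiv 5 \pmod 8$), so the local factor at $2$ is the one computed in Theorem \ref{local}. The group $\ok_K^\times = \langle \zeta_6 \rangle$ is cyclic of order $6$, and since $\zz/2\zz$ has no nontrivial $3$-torsion we have $\chi(\zeta_6) = \chi(-1)$ for every $\chi : \ii_K^\infty \to \zz/2\zz$; triviality on $\ok_K^\times$ is therefore the same condition $\chi(-1)=0$ as in the main case. The proof of Theorem \ref{main} thus goes through verbatim, with $w_K=6$ appearing only through Theorem \ref{residue}.

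Next I would handle $K=\qq(\sqrt{-2})$. The unit group is still $\{\pm 1\}$, but $2$ is ramified and a new local factor is needed at the unique prime above $2$. The local field $K_\pp = \qq_2(\sqrt{-2})$ has uniformizer $\pi = \sqrt{-2}$ and residue field $\mathbb{F}_2$; by Theorem \ref{unit}, $\uu_\pp \cong \zz/2\zz \times \zz_2^2$. Using Theorem \ref{mod} one would compute the quotients $\uu_\pp/\uu_\pp^{(n)}$, enumerate the $\zz/2\zz$-valued characters and their conductors, and verify
\[
g_\pp(s) = 1 + 2^{-2s} + 2\cdot 2^{-4s} + 4\cdot 2^{-5s}.
\]
The odd local factor $g_\pp(s)_-$ is obtained the same way, by tracking the image of $-1$ in each $\uu_\pp/\uu_\pp^{(n)}$. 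Substituting these into the argument of Theorem \ref{main} gives the formula.

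Finally, for $K=\qq(i)$ both complications appear. At the ramified prime above $2$, $K_\pp = \qq_2(i)$ has $\pi = 1+i$ and $\uu_\pp \cong \zz/4\zz \times \zz_2^2$; the same analysis produces the same polynomial as in $\qq(\sqrt{-2})$, since the $\zz/2\zz$-quotients of $\zz/4\zz$ and $\zz/2\zz$ agree. The counting condition becomes $\chi(i)=0$, and since $\mathrm{Hom}(\langle i\rangle, \zz/2\zz)$ is still cyclic of order two, this is one parity condition, so a single twist $f_-$ suffices, now recording the image of $i$ rather than $-1$. The corresponding odd local factors require identifying the image of $i \in \uu_\pp$ at each prime; it sits at different levels of the filtration $\uu_\pp \supset \uu_\pp^{(1)} \supset \uu_\pp^{(2)} \supset \cdots$ depending on whether $\pp$ is split, inert or ramified and on $N(\pp) \bmod 4$. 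This case-by-case tabulation of $g_\pp(s)_-$ is the main technical obstacle. Once it is done, combining into $f_K$, applying Theorem \ref{tauberian} and invoking Theorem \ref{residue} finishes the argument; the factor $1/w_K = 1/4$ surfaces through the class-number formula.
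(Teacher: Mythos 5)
Your proposal follows essentially the same route as the paper: for $\qq(\sqrt{-3})$ the order-three part of the unit group is invisible to $\zz/2\zz$-valued characters so only the condition $\chi(-1)=0$ survives, for $\qq(\sqrt{-2})$ and $\qq(i)$ one recomputes the local factor at the ramified prime above $2$ (obtaining $1+2^{-2s}+2\cdot2^{-4s}+4\cdot2^{-5s}$ in both cases), and for $\qq(i)$ one introduces a single parity twist recording the image of $i$. One small correction to your sketch: for split primes of $\qq(i)$ the nontrivial local character kills $i$ iff $p \equiv 1 \pmod 8$ (not a condition mod $4$), and the paper completes the step you defer by identifying the twisted series as $L(\chi_8,s)L(\chi_4\chi_8,s)/\zeta_{\qq(i)}(2s)$ times elementary factors, whose holomorphy at $s=1$ is exactly what lets the Tauberian theorem pick up only the pole of $f_0$.
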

\begin{proof}
	We only need to show this for $K=\qq(i),\qq(\sqrt{-3}), \qq(\sqrt{-2})$.\\
The only problem with $\qq(\sqrt{-3})$ is that the unit group has six elements, $\ok_K^\times \cong \zz/6\zz \cong \zz/2\zz \times \zz/3\zz$. The $\zz/2\zz$ part is generated by $-1$ and the $\zz/3\zz$ is generated by the image of $\zeta_3 = e^{2\pi i/3}$. But $\zeta_3$ must map to $0 \in \zz/2\zz$ for every local character, since it has order 3, and so the Theorem \ref{main} works for the number field $\qq(\sqrt{-3})$, except now $w_K = 6$ in the formula for the residue of the Dedekind zeta function.\\
For the number field $\qq(\sqrt{-2})$ the problem is that 2 is ramified, which means that the local factor $g_{\sqrt{-2}}(s)$ is going to be different. We can calculate it similarly as in Lemmas \ref{conductor} and  \ref{local}. From Theorem \ref{extension}, we can see that at $\sqrt{-2}$ the local field is $\qq[X]/(X^2+2)\otimes_\qq \qq_2 \cong \qq_2[X]/(X^2+2)$ and its ring of integers is $\ok_{\sqrt{-2}} \cong \zz_2[X]/(X^2+2)$. Using Theorem \ref{unit} we get $\uu_{\sqrt{-2}} \cong \zz/2\zz \times \zz_2^2$, so there are also 7 non-trivial characters.
The uniformizer in the local field is not 2, but $\sqrt{-2}=X$ (it is the element with the lowest nonzero valuation and $\nu_{\sqrt{-2}}(2)=2$).
Using Theorem \ref{mod}, we can see that $\uu/\uu^{(n)} \cong (\zz_2[X]/(X^2+2,X^n))^\times$. By analyzing the structure of these groups, we can calculate the local factor to be $g_{\sqrt{-2}}(s)=(1+2^{-2s}+2\cdot2^{-4s}+4\cdot2^{-5s})$ .\\
For $\qq(i)$ there are two problems. The prime 2 is ramified ($(2) = (1+i)^2$) and the ring of integers of the local field is $\zz_2[X]/(X^2+1)$ and the uniformizer is $(1+i)=(1+X)$. We can compute the local factor as in the previous case and it is also $g_{1+i}(s)=(1+2^{-2s}+2\cdot2^{-4s}+4\cdot2^{-5s})$. \\
The unit group has 4 elements and is generated by $i$. So $i$ has to be mapped to 0 in $\zz/2\zz$ by the character, which means it has to be mapped to 1 by an even number of local characters. The number $i$ is an element of order 4. If $p$ is inert, that it is equal to 3 mod 4, then $8|p^2-1$ and so $i$ gets mapped to 0. If p is split, then $i$ is mapped to 0 iff $p$ is 1 mod 8. The counting function is constructed similarly, only now the function $f_-(s)$ is equal to 
\begin{equation*}
f_-(s) = \prod_{p \text{ ine.}} (1+p^2) \prod_{p \text{ spl., 1 mod 8}} (1+p)^2 \prod_{p \text{ spl., 5 mod 8}} (1-p)^2\times g_-(s)_{1+i} \times R(s),
\end{equation*}
 which can also be written as $L(\chi_8,s)L(\chi_4\chi_8,s)/\zeta_{\qq(i)}(2s) \times B(s)$ similarly as in Theorem \ref{zeta}, where the character $\chi_8(s)=-1$ for $s=5,7$ mod 8 and $\chi_8(s)=1$ otherwise. This function is holomorphic in the region $ \mathrm{Re}(s) \geq 1$ and so the Theorem \ref{main} holds even for $\qq(i)$.
\end{proof}
\section{The case of real number fields}
Let $K=\qq(\sqrt{D})$ be a real quadratic number field. The main difference with real number fields is that we have 2 infinite real places $\nu_1,\nu_2$. There is a nontrivial local character $\rr^\times \to \zz/2\zz$ which sends the negative numbers to 1. We also have more units, the unit group is generated by -1 and the fundamental unit $\varepsilon$.
We are thus looking for characters that send both -1 and $\varepsilon$ to 0. We will focus on the case where there is a unit of norm -1. This means that the negative Pell's equation $x^2-Dy^2=-1$ (when $D= 3$  (mod $4)$) or $x^2-Dy^2=-4$ (when $D= 1$ $($mod $4)$) has a solution. In this case the counting function $f$ is similar to the function $f_0$ in the imaginary case.
\begin{theorem}
For a real quadratic number field $K$ with odd class number such that 2 is not ramified and with a unit of norm -1. Then the counting function has the form:
\begin{equation*}
	f_0(s) =\prod_{\pp \text{ primes of }K}g_\pp(s)
	\end{equation*}
	where $g_\pp(s)$ is as in theorems \ref{gnot2} and \ref{local} .
	And the analogue of Theorem \ref{real} holds
	\begin{equation*}
f_0(s) =  \frac{\zeta_K(s)}{\zeta_K(2s)} \times \prod_{\pp |2} \frac{g(s)_\pp}{(1+N(\pp)^{-s})}.
\end{equation*}
\end{theorem}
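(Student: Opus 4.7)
The plan is to adapt the argument from the imaginary quadratic case. The extra features here are the two real archimedean places and the fundamental unit $\varepsilon$ with $N(\varepsilon)=-1$; the hypothesis on $\varepsilon$ will allow the infinite-place contributions to be eliminated exactly.

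I would begin by decomposing every continuous character $\chi : \ii_K^\infty \to \zz/2\zz$ as
$\chi = \chi_{\sigma_1} + \chi_{\sigma_2} + \sum_\pp \chi_\pp$,
where $\chi_{\sigma_i}: \rr^\times \to \zz/2\zz$ is trivial or the sign character and $\chi_\pp: \uu_\pp \to \zz/2\zz$ runs over the finite primes. This is Lemma~\ref{technic} with the caveat that $\rr^\times$ is not connected, so the archimedean components need not be trivial; nevertheless they contribute nothing to the norm of the conductor, which remains supported on finite primes.

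Next I would impose $\chi(\ii_K^\infty \cap K^\times) = 0$, which by the odd class number and by $-1,\varepsilon$ generating $\ok_K^\times$ amounts to $\chi(-1) = \chi(\varepsilon) = 0$ in $\zz/2\zz$. Labelling the real embeddings so that $\sigma_1\varepsilon > 0 > \sigma_2\varepsilon$ (possible precisely because $N(\varepsilon) = -1$), the two conditions read
\begin{align*}
\chi_{\sigma_1}(-1) + \chi_{\sigma_2}(-1) + \sum_\pp \chi_\pp(-1) &= 0, \\
\chi_{\sigma_2}(-1) + \sum_\pp \chi_\pp(\varepsilon) &= 0.
\end{align*}
The crucial observation is that this linear system over $\zz/2\zz$ uniquely determines the binary unknowns $\chi_{\sigma_1}(-1)$ and $\chi_{\sigma_2}(-1)$ from the finite data. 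Hence for every choice of finite local characters $(\chi_\pp)_\pp$ there is exactly one admissible global character, with the same absolute conductor $\prod_\pp N(\pp)^{f_{\chi_\pp}}$.

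The rest is a verbatim repetition of the imaginary case: an Euler-style regrouping as in Theorem~\ref{ f0formula} produces $f_0(s) = \prod_\pp g_\pp(s)$; the factors $g_\pp(s)$ depend only on the local structure of $\uu_\pp$, which at odd primes is Lemma~\ref{even} and at primes above $2$ is Lemma~\ref{localfield} (using that $2$ is unramified), and thus match Theorems~\ref{gnot2} and~\ref{local}. The identity $(1 + N(\pp)^{-s}) = (1 - N(\pp)^{-2s})/(1 - N(\pp)^{-s})$ and the Euler product for $\zeta_K$ then give the second formula, exactly as in Theorem~\ref{real}; this step is signature-independent. The main delicate point is the unique-extension argument, but it reduces to the non-degeneracy of the $2\times 2$ system above, which is immediate because $\chi_{\sigma_1}(-1)$ appears only in the first equation and the coefficient matrix is upper triangular.
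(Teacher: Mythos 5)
Your proposal is correct and follows essentially the same route as the paper: the paper likewise observes that $\nu_1(\varepsilon)>0>\nu_2(\varepsilon)$ while $-1$ is negative at both places, so the infinite-place characters can realise any pair of values on $(-1,\varepsilon)$, giving a unique even extension of each finite local datum, after which the Euler-product manipulation of Theorems~\ref{ f0formula} and~\ref{real} carries over verbatim. Your explicit upper-triangular $2\times 2$ system over $\zz/2\zz$ is just a more detailed spelling-out of that non-degeneracy.
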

\begin{proof}
    The fundamental unit $\varepsilon$ and -1 generate the units of $K$ by the Dirichlet unit theorem. The condition means that we have $\nu_1(\varepsilon) > 0, \nu_2(\varepsilon) < 0$. We also have  $\nu_1(-1) < 0, \nu_2(-1) < 0$. This means that in this case we can choose characters on the infinite real primes to send -1 and $\varepsilon$ to any combination of 0 and 1 in $\zz/2\zz$.
    The proof is then similar to Theorem \ref{ f0formula}, since for every character on the finite primes there is a unique character on the infinite primes such that their sum sends all units to 0. Therefore all characters can be chosen to be even.
\end{proof}
These number fields have the following form.
\begin{theorem}
    Let $K$ be a real quadratic number field with odd class number and with a unit of norm -1 such that 2 is not ramified.  Then these fields are exactly of the form $\qq({\sqrt{p}})$ where $p$ is a prime that is $1$ $(mod$ $4)$.
\end{theorem}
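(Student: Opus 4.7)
The plan is to combine the ramification hypothesis at $2$ with Gauss's genus theory applied to the \emph{narrow} class group, and then use the existence of a unit of norm $-1$ to pass between the narrow and ordinary class groups. Throughout, write $K=\qq(\sqrt{D})$ with $D>0$ squarefree.

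First I would dispense with the prime $2$. The discriminant of $K$ is $D$ if $D\equiv 1\pmod 4$ and $4D$ if $D\equiv 2,3\pmod 4$, so $2$ is unramified exactly when $D\equiv 1\pmod 4$. This immediately gives the congruence condition in the conclusion, and identifies the ramified rational primes in $K/\qq$ with the prime divisors of $D$ itself.

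Next I would invoke genus theory (as cited in \cite{buell}) in its narrow form: if $r$ denotes the number of ramified primes in $K/\qq$, then the narrow class group $Cl_K^+$ has $2$-rank exactly $r-1$. The relation between narrow and ordinary class number is
\begin{equation*}
h_K^+ \;=\; \begin{cases} h_K & \text{if $K$ has a unit of norm $-1$,}\\ 2h_K & \text{otherwise.} \end{cases}
\end{equation*}
Under the two standing assumptions (odd $h_K$ and existence of a unit of norm $-1$), we conclude that $h_K^+=h_K$ is odd, so the $2$-rank of $Cl_K^+$ is $0$, i.e.\ $r=1$. Combined with the identification of ramified primes with prime divisors of $D$ and squarefreeness, this forces $D=p$ to be a single prime, and together with $D\equiv 1\pmod 4$ yields $p\equiv 1\pmod 4$.

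For the converse, suppose $K=\qq(\sqrt{p})$ with $p$ prime and $p\equiv 1\pmod 4$. Then $2$ is unramified (by the discriminant computation above) and $r=1$, so genus theory gives that $Cl_K^+$ has odd order. Because $h_K^+$ is odd, the formula above rules out the case $h_K^+=2h_K$, so $K$ must have a unit of norm $-1$ and in addition $h_K=h_K^+$ is odd. The main obstacle, and the only nontrivial input, is the genus-theoretic computation of the $2$-rank of the narrow class group; once that is cited, the rest is just bookkeeping with the narrow-versus-ordinary class number relation.
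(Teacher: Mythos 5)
Your argument is correct, and the forward direction is essentially the paper's own proof: use the unramifiedness of $2$ to get $D\equiv 1\pmod 4$, apply Gauss's genus theory to the narrow class group to see its $2$-part is controlled by the number $r$ of ramified primes, and use the unit of norm $-1$ to identify the narrow class group with the ordinary one, forcing $r=1$. Where you genuinely diverge is the converse. The paper establishes that $\qq(\sqrt p)$ with $p\equiv 1\pmod 4$ has a unit of norm $-1$ by citing the classical solvability of the negative Pell equation $x^2-py^2=-4$ (Theorem 9.3 in \cite{buell}), whereas you derive it internally: genus theory with $r=1$ forces $h_K^+$ odd, and the dichotomy $h_K^+\in\{h_K,2h_K\}$ then rules out the case without a unit of norm $-1$. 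Your route is more self-contained (one citation instead of two, and it simultaneously yields the oddness of $h_K$, which the paper's converse leaves implicit); the paper's route leans on a standard named result and keeps the proof shorter. You also make explicit the narrow-versus-ordinary class number relation that the paper uses only implicitly in the sentence ``because we have a unit of norm $-1$, the narrow class group is the same as the class group.'' Both arguments are valid.
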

\begin{proof}
        $K$ must be of the form $\qq(\sqrt{D})$ for some squarefree $D$ that is 1 $mod$ 4. The Gauss's genus theory says that the narrow class group has exactly $2^{r-1}$ elements of order $\leq 2$ where $r$ is the number of primes dividing $D$. Because we have a unit of order -1, the narrow class group is the same as the class group. Therefore $D$ must be prime. Additionaly the Pell equation $x^2-Dy^2=-4$ has a solution ( see Theorem 9.3 in \cite{buell}), so the field has a unit of norm -1.
\end{proof}
From this we get the analogue of Theorem \ref{main}:
\begin{theorem} \label{mainreal}
	For a real quadratic number field $K=\qq(\sqrt{p})$ with $p$ prime that is $1$ $(mod$ $4)$ the function $a_K(n) = \#\lbrace L / K | \deg(L/K)=2, \mathfrak{d}_L \leq n\rbrace$ is asymptotically equal to
	\begin{equation*}
	a_K(n) = Cn + o(n)
	\end{equation*}
	where $C$ is given by
	\begin{equation*}
	    	C  =  \frac{1}{\mathfrak{d}_K^2 } \frac{\mathrm{Res}_{s=1}\zeta_K(s)}{\zeta_K(2)} \prod_{\pp |2} \frac{g_\pp(1)}{(1+N(\pp)^{-1})} =\frac{2\mathfrak{d}_K^{-5/2}  \log \varepsilon \cdot h_K}{\zeta_K(2)} \prod_{\pp |2} \frac{g_\pp(1)}{(1+N(\pp)^{-1})}
	    	\end{equation*}

\end{theorem}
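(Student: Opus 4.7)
The plan is to mimic the endgame of the imaginary case (Theorem \ref{main}), invoking the preceding theorem as the main input. The essential simplification is that, in the real case with a unit of norm $-1$, every continuous character on the finite-prime part of $\ii_K^\infty$ extends in exactly one way to an even character via the two archimedean real places; consequently the symmetrization $\tfrac12(f_0+f_-)$ that was required for imaginary fields is not needed, and the function $f_0(s)$ alone already enumerates the quadratic extensions of $K$, weighted by the norm of their conductor.

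Concretely, I would set
\begin{equation*}
f_K(s) = \mathfrak{d}_K^{-2s}\bigl(f_0(s)-1\bigr),
\end{equation*}
where the subtracted $1$ removes the trivial character and the factor $\mathfrak{d}_K^{-2s}$ converts conductor norms into absolute discriminants via the tower formula of Theorem \ref{discriminant}, exactly as in the imaginary case. By the preceding theorem,
\begin{equation*}
f_0(s) = \frac{\zeta_K(s)}{\zeta_K(2s)}\prod_{\pp\mid 2}\frac{g_\pp(s)}{1+N(\pp)^{-s}},
\end{equation*}
so $f_K(s)$ is meromorphic on $\mathrm{Re}(s)\geq 1$ with a unique simple pole at $s=1$ coming from $\zeta_K(s)$; here one uses that $\zeta_K$ has no zeros on $\mathrm{Re}(s)>1$, so $1/\zeta_K(2s)$ is holomorphic on $\mathrm{Re}(s)\geq 1$, and that the finite product over $\pp\mid 2$ is entire.

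Applying the special case of Theorem \ref{tauberian} then yields $a_K(n) = Cn+o(n)$ with
\begin{equation*}
C = \mathrm{Res}_{s=1} f_K(s) = \frac{1}{\mathfrak{d}_K^2}\cdot\frac{\mathrm{Res}_{s=1}\zeta_K(s)}{\zeta_K(2)}\prod_{\pp\mid 2}\frac{g_\pp(1)}{1+N(\pp)^{-1}},
\end{equation*}
which is the first displayed form. For the second form, I would substitute the class number formula (Theorem \ref{residue}) with the invariants $r_1=2$, $r_2=0$, $w_K=2$, and $\mathrm{Reg}_K=\log\varepsilon$ of a real quadratic field, giving $\mathrm{Res}_{s=1}\zeta_K(s) = 2\log\varepsilon\cdot h_K/\sqrt{\mathfrak{d}_K}$, which combined with $\mathfrak{d}_K^{-2}\cdot\mathfrak{d}_K^{-1/2}=\mathfrak{d}_K^{-5/2}$ produces the claimed expression $2\mathfrak{d}_K^{-5/2}\log\varepsilon\cdot h_K/\zeta_K(2)$ times the local correction at $2$. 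No serious obstacle arises since the preceding theorem together with Theorems \ref{tauberian} and \ref{residue} do all the analytic and arithmetic work; the only point requiring care is the bookkeeping of the discriminant power and the verification that the unit-of-norm-$-1$ hypothesis is precisely what allows the dropping of the $f_-$ term.
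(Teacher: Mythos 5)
Your proposal is correct and follows essentially the same route as the paper: the paper's proof likewise applies the special case of Theorem \ref{tauberian} to $f_K(s)=\mathfrak{d}_K^{-2s}f_0(s)-\mathfrak{d}_K^{-2s}$ (identical to your $\mathfrak{d}_K^{-2s}(f_0(s)-1)$) and then invokes the class number formula, and your substitution $r_1=2$, $r_2=0$, $w_K=2$, $\mathrm{Reg}_K=\log\varepsilon$ reproduces the stated constant. The only difference is that you spell out the residue computation and the role of the norm $-1$ unit, which the paper leaves implicit by citing its preceding theorem.
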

\begin{proof}
    	Apply the special case of Theorem \ref{tauberian}  to the function \\$f_K(s) = \mathfrak{d}_K^{-2s} f_0(s)-\mathfrak{d}_K^{-2s}$ and then use the class number formula.
\end{proof}

\bibliographystyle{alphaurl}
\bibliography{knihy1}

\begin{thebibliography}{Woo14}

\bibitem[Bue89]{buell}
Duncan~A Buell.
\newblock {\em Binary quadratic forms}.
\newblock Springer, 1989.

\bibitem[Coh00]{cohen}
Henri Cohen.
\newblock {\em Advanced topics in computational number theory}.
\newblock Springer, 2000.

\bibitem[LR20]{li}
Wen-Ching~Winnie Li and Zeev Rudnick.
\newblock Pair arithmetical equivalence for quadratic fields.
\newblock \url{https://arxiv.org/abs/2007.13147}, 2020.

\bibitem[Mil13]{cft}
James Milne.
\newblock Class field theory, 2013.

\bibitem[Mil17]{ant}
James Milne.
\newblock Algebraic number theory, 2017.

\bibitem[Nar83]{narkiewicz}
W\l{}adys\l{}aw Narkiewicz.
\newblock {\em Number theory}.
\newblock World Scientific Publishing Co., 1983.

\bibitem[NS99]{neukirch}
J\"{u}rgen Neukirch and Norbert Schappacher.
\newblock {\em Algebraic number theory}.
\newblock Springer, 1999.

\bibitem[Woo14]{wood}
Melanie~Matchett Wood.
\newblock Asymptotics for number fields and class groups, 2014.
\newblock URL: \url{http://swc.math.arizona.edu/aws/2014/2014WoodNotes.pdf}.

\bibitem[Wri89]{wright}
David~J. Wright.
\newblock Distribution of discriminants of abelian extensions.
\newblock {\em Proc. London Math. Soc}, 1989.

\bibitem[Zag86]{zagier}
Don Zagier.
\newblock Hyperbolic manifolds and special values of dedekind zeta-functions.
\newblock 1986.

\end{thebibliography}

\end{document}